\definecolor{darkred}{rgb}{.7,0,0}
\definecolor{green}{rgb}{0,0.7,0}
\newtheoremstyle{thmm}{1.5ex plus 1ex minus .2ex}{1.5ex plus 1ex minus.2ex}{\rmfamily}{}{\bfseries}{}{1em}{} \theoremstyle{thmm}
\newtheorem{theorem}{Theorem}[section]
\newtheorem{lemma}{Lemma}[section]
\newtheorem{remark}{Remark}[section]
\renewenvironment{proof}[1][Proof]{\noindent\textit{#1. }
}{\hfill$\square$}
\def\R{\mathbb{R}}
\def\C{{\mathbb C}}
\def\i{{\mathrm i}}
\def\d{{\mathrm d}}
\def\Omega{{\varOmega}}
\newcommand{\sech}{{\rm sech}}
\definecolor{mygray}{rgb}{0.9,0.9,0.9}
\definecolor{b}{rgb}{0.00,0.00,0.8}
\definecolor{darkred}{rgb}{0.55,0,0.0}
\title{\Large\bf High-order mass- and energy-conserving SAV--Gauss collocation finite element methods for the nonlinear Schr\"odinger equation}
\author{\normalsize 
Xiaobing Feng\thanks{Department of Mathematics, 
The University of Tennessee, Knoxville, TN 37996, U.S.A. 
Email address: xfeng@math.utk.edu). The work of this author
was partially supported by the NSF-grant DMS-1620168.}
\and 
Buyang Li\thanks{Department of Applied Mathematics, The Hong Kong Polytechnic University, Hung Hom, Hong Kong. \linebreak Email address: buyang.li@polyu.edu.hk, maisie.ma@connect.polyu.hk. 
The work of B. Li was partially supported by an internal grant of the univeristy (project code: ZZKQ) and the work of S. Ma was partially supported by a Hong Kong RGC grant (Project No. 15300817). 
}
\and 
Shu Ma\footnotemark[2]
}
\date{}
\begin{document}

\maketitle

\vspace{-10pt}

\begin{abstract}
A family of arbitrarily high-order fully discrete space-time finite element methods are proposed for the nonlinear Schr\"odinger equation based on the scalar auxiliary variable formulation, which consists of a Gauss collocation temporal discretization and the finite element spatial discretization. The proposed methods are proved to be well-posed and conserving both mass and energy at the discrete level. 
An error bound of the form $O(h^p+\tau^{k+1})$ in the $L^\infty(0,T;H^1)$-norm is established, where $h$ and $\tau$ denote the spatial and temporal mesh sizes,  respectively, and $(p,k)$ is the degree of the space-time finite elements. 
Numerical experiments are provided to validate the theoretical results on the convergence rates and conservation properties. The effectiveness of the proposed methods in preserving the shape of a soliton wave is also demonstrated by numerical results. 
\\ 

\noindent{\bf Key words:}$\,\,\,$ 
	Nonlinear Schr\"odinger equation, mass- and energy-conservation, 
	high-order conserving schemes,
	 SAV-Gauss 	collocation finite element method, error estimates.
\end{abstract}



\section{Introduction}\label{sec-1}

This paper is concerned with the development and analysis of high-order fully discrete numerical methods for the following 
initial-boundary value problem of the nonlinear Schr\"odinger (NLS) equation:
\begin{subequations}\label{pde}
\begin{alignat}{2}
\i \partial_t u - \Delta u - f(|u|^2)u &=0 &&\qquad\mbox{in}\,\,\,\Omega\times(0,T],
\label{pde1}\\
u&=0 &&\qquad\mbox{on}\,\,\,\partial\Omega\times(0,T],\\
u &=u_0&&\qquad \mbox{in}\,\,\,\Omega\times\{0\},
\end{alignat}
\end{subequations} 
where $\Omega\subset\R^d$ is a polygonal or polyhedral domain with boundary $\partial\Omega$, and $u:\Omega\rightarrow\C$ is a complex-valued function, with $\i=\sqrt{-1}$, and $f:\R_+\to \R$ is the derivative of some function $F:\R_+\to \R$. The best known examples are 
\begin{align}\label{nonlinearity-f}
f(s)=\pm s^{\frac{q-1}{2}}
\quad\mbox{and}\quad 
F(s)=\pm\frac{2}{q+1}s^{\frac{q+1}{2}}, 
\quad\mbox{with}\quad q>1,
\end{align}
where the ``$-$'' and ``$+$'' cases are often referred to as defocusing and focusing models, respectively. In the focusing case, the solution will blow up in $L^\infty(\Omega)$ within finite time when the initial energy is negative; see \cite{Bourgain1999Global,Tao2006Nonlinear}. The NLS equation \eqref{pde} arises from many applications in physics and engineering, and is one of the fundamental equations in mathematical physics \cite{Bourgain1999Global,Tao2006Nonlinear,Zabusky1965Interaction, Pelinovsky1996Nonlinear,
Sch1996Traveling}. 

It is well known that the solutions of \eqref{pde} conserve the mass and energy in the sense that
for all $t\geq 0$
\begin{align}
 \frac{\d}{\d t}\int_\Omega |u|^2\d x &=0, \qquad \mbox{(mass conservation)} \label{mass} \\[5pt]
\frac{\d}{\d t}\int_\Omega \Big(\frac12|\nabla u|^2 -  \frac12F(|u|^2) \Big)\d x &=0. \qquad
 \mbox{(energy conservation)}  \label{energy}
\end{align}
The development of numerical methods that can retain these conservation properties in numerical solutions is important for long-time numerical simulation, and therefore has been one of the research focuses in numerical approximation to the NLS equation.

There exists a large amount of literature on numerical solutions and numerical analysis of the NLS
equation.  Delfour et al. \cite{Delfour1981Finite} proposed a second-order modified Crank--Nicolson scheme for the NLS equation 
to preserve both mass and energy conservations. The construction of this method was motivated by a similar scheme for the Klein--Gordon equation in \cite{Strauss1978Numerical}.  
 Sanz-Serna \cite{Sanz-Serna1984Methods} extended the modified Crank--Nicolson time-stepping 
scheme to the NLS equation with more general nonlinear term, and established an optimal order error 
estimate for the fully discrete finite element discretization. For the cubic NLS equation, more delicate 
results on the existence, uniqueness and convergence of numerical solutions of the modified Crank--Nicolson scheme were proved by Akrivis et al. in \cite{Akrivis1991On}. 
The modified Crank--Nicolson scheme has been widely used in computation and was combined 
with spatial finite difference methods to solve the NLS and Gross--Pitaevskii equations in \cite{Akrivis1993,Antoine2013Computational,Bao-Cai-2013,bao2013numerical},
  and with various spatial discretization methods to approximate the NLS in \cite{Wang201New, Hong2006Globally,Liu2019Accuracy, Xu2005Local, Gao2011Fourth}. 
Besides the modified Crank--Nicolson scheme, a second order explicit leapfrog scheme  
for the NLS equation was proposed by Sanz-Serna and Manoranjan in \cite{Sanz-Serna1983A}. This  
scheme was proved to preserve mass conservation at the discrete level. Optimal rates of convergence 
of the fully discrete leapfrog finite element method was established in \cite{Sanz-Serna1984Methods}. 
To avoid solving nonlinear systems, a linearly implicit leapfrog scheme was proposed by Fei et al. in \cite{Fei1995Numerical} for the NLS equation, and a linearly implicit relaxation scheme was 
proposed by Besse in \cite{Besse2004Relaxation}. Both schemes preserve mass and energy conservations 
at the discrete level. More recently, Feng et al. \cite{Feng_Liu_Ma2019} constructed a class of second-order mass- and energy-conserving schemes for the NLS equation, including the modified Crank--Nicolson method, the implicit leapfrog method and a class of modified backward differentiation formulae as special cases.

To the best of our knowledge, all the existing mass- and energy-conserving methods have only 
second-order accuracy in time. No higher-order time-stepping schemes, 
which conserve both mass 
and energy, have been reported in the literature. Moreover, the existing error estimates for 
nonlinearly implicit schemes for the NLS equation generally require certain grid-ratio conditions. 
The standard grid-ratio conditions in the literature are $\tau=o(h^{\frac{d}{4}})$ for the cubic NLS equation and $\tau=o(h^{\frac{d}{2}})$ for general nonlinearity, where $h$ and $\tau$ denote the 
spatial and temporal mesh sizes. Karakashian and Makridakis \cite{1998-Karakashian-Makridakis,1999-Karakashian-Makridakis} proposed some continuous and 
 discontinuous space-time Galerkin finite element methods for the cubic 
NLS equation and proved optimal-order convergence under a weaker grid-ratio condition $\tau^{k-1}|\ln h|\rightarrow 0$ in two dimensions, where $k\ge 2$ is the degree of finite elements in time. 
For the defocusing cubic NLS equation (or the focusing cubic NLS equation with sufficiently small initial data), using the energy conservation of the numerical scheme, error estimates were established
 without grid-ratio condition in \cite{Gong-Wang-Wang-2017,Wang-Guo-Xu-2013}. For general 
 nonlinearity (possibly focusing), Wang \cite{Wang201New} established an error 
 estimate for 
 a linearized semi-implicit scheme without grid-ratio condition; Henning and Peterseim \cite{Henning-Peterseim-2017} established an error estimate for the nonlinearly implicit Crank--Nicolson finite element method without grid-ratio condition. Both \cite{Wang201New} and \cite{Henning-Peterseim-2017} used an error splitting technique in which they proved boundedness of the numerical solutions by establishing an $L^\infty$-norm error estimate between the fully discrete 
 and the semidiscrete-in-time numerical solutions. 
 The error splitting technique allows to avoid grid-ratio conditions in using the inverse inequality.

The objective of this paper is to develop a family of arbitrarily higher-order mass- and energy-conserving fully discrete space-time finite element methods based 
on the scalar auxiliary variable (SAV) formulation of the NLS equation, and to establish 
the existence, uniqueness and optimal order convergence of numerical solutions 
without grid-ratio condition.   
Two key ideas are utilized in our construction of the method. First, the SAV reformulation of the NLS equation is used. This approach was introduced in \cite{2018-Shen-Xu-Yang,2019-Shen-Xu-Yang} as an enhanced version of the invariant energy quadratization (IEQ) approach  \cite{Yang-2016,Yang-Ju-2017-1,Yang-Ju-2017-2,Yang-ZWS}, for developing energy-decay methods for dissipative (gradient flow) systems. Here we adapt the SAV approach to the dispersive 
NLS equation, and the SAV reformulation is essential to enable our methods to maintain the energy conservation property at the discrete level. 
Second, the Gauss collocation method is used 
for time discretization in the SAV formulation of the NLS equation. The method can be viewed as an efficient implementation of the space-time finite element methods for 
the SAV formulation with Gauss quadrature in time. The Gauss collocation method was combined with IEQ and SAV to 
preserve energy decay in solving phase field equations in \cite{Akrivis-Li-Li-2019,Gong-Zhao-2019,Gong-Zhao-Wang-2020}. 
We adopt this method here to preserve mass conservation without affecting the energy 
conservation structure of the SAV formulation. 

The SAV formulation introduces new difficulties to error analysis for the NLS equation due to 
the presence of $\partial_t u$ in the equation of $r$, see equation \eqref{IEQ-r}, which leads to a consistency error of sub-optimal order in time and introduces new difficulty in obtaining the stability estimate. As far as we know, rigorous analysis for convergence of numerical methods based on SAV formulations has not been done for any wave equation so far. These difficulties are overcome by combining three techniques. First, inspired by the error analysis of Karakashian and Makridakis \cite{1999-Karakashian-Makridakis}, our proof makes use of properties of the 
Legendre polynomials on each interval $I_n$, rewriting the Gauss collocation method into a 
space-time Galerkin finite element method, which makes it easier to choose suitable test 
functions in the error estimation. Second, we introduce a temporal Ritz projection and use a super-approximation result of the temporal local $L^2$ projection to eliminate the sub-optimal temporal consistency error caused by $\partial_t u$ in the equation of $r$; see Remark \ref{Remark:consistency} (hence, the proof of optimal order in Theorem \ref{THM:consistency} is one of our main contributions).
Third, we estimate the time derivative of the error 
in $H^{-1}(\Omega)$ with a duality argument following an $H^1$-norm error estimate. As a result, we obtain an optimal-order $H^1$-norm error estimate in the end. We prove the existence, uniqueness 
and optimal-order convergence of numerical solutions based on Schaefer's  fixed point 
theorem in an $L^\infty$-neighborhood of the exact solution. This avoids grid-ratio conditions 
for the NLS equation with general nonlinearity. 

The rest of this paper is organized as follows. In Section \ref{sec-2}, we present the SAV 
reformulation of the NLS equation and introduce our SAV space-time Gauss collocation finite 
element method. 
In Section \ref{sec-3}, we first present an integral reformulation of the proposed numerical 
method and  then establish its mass and energy conservation properties. We also derive a 
consistency error estimate for the proposed method, which is vitally used to prove an error 
estimate in the subsequent section. 
In Section \ref{sec-4}, we first establish the well-posedness of the numerical method and then prove an error bound of the form $O( h^p + \tau^{k+1} )$ in the energy norm, where $\tau$ and $h$ denote the temporal and spatial mesh sizes, respectively, with $(p,k)$ denoting the degree of polynomials in 
the space-time finite element method. 
Finally, in Section \ref{sec-5}, we present a few numerical experiments to validate the  theoretical
results, and to demonstrate the effectiveness of the proposed method in preserving the shape 
of a soliton wave. 

Throughout this paper,  unless stated otherwise, $C$ will be used to denote a generic positive 
constant which is 
independent of $\tau$, $h$, $n$ and $N$, but may depend on $T$ and the regularity of solution.

\section{Formulation of the SAV--Gauss collocation finite element method}\label{sec-2}

In this section, we construct a Gauss collocation finite element method based on the SAV reformulation of the NLS equation.

\subsection{Function spaces}
Let $H^k(\Omega)$, $k\ge 0$, be the conventional complex-valued Sobolev space of functions on $\Omega$, and denote 
$$L^2(\Omega)=H^0(\Omega)
\quad\mbox{and}\quad
H^1_0(\Omega)=\{v\in H^1(\Omega): v=0 \mbox{ on}\,\,\partial\Omega\}.
$$ 
We denote by $(\cdot,\cdot)$ and $\|\cdot\|$ the inner product and norm of the complex-valued Hilbert space $L^2(\Omega)$, respectively, defined by  
$$
(u,v):=\int_\Omega u\,\overline{v}\,\d x 
\quad\mbox{and}\quad
\|u\|:= \sqrt{(u,u)}. 
$$
For $m,s\ge 0$ and $1\le p\le \infty$, the notation $W^{m,p}(0,T;H^s(\Omega))$ stands for the space-time Sobolev space of functions which are $W^{m,p}$ in time and $H^s$ in space; see \cite[Chapter 5.9]{Evans-PDE}. We abbreviate the norms of $H^s(\Omega)$ and $W^{m,p}(0,T;H^s(\Omega))$ as $\|\cdot\|_{H^k}$ and $\|\cdot\|_{W^{m,p}(I_n;H^s)}$, respectively, omitting the dependence on $\Omega$ in the subscripts. 

\subsection{The SAV reformulation of \eqref{pde}}\label{sec-2.1}

The SAV formulation of the NLS equation (cf. \cite{2019-Shen-Xu-Yang}) introduces a scalar auxiliary variable   
\begin{align}
r = \sqrt{ \mbox{$\int_\Omega \frac12  $} F(|u|^2) \d x +c_0}
\quad\mbox{with}\quad
g(u)=\frac{f(|u|^2)}{ \sqrt{ \mbox{$\int_\Omega $} \frac12 F(|u|^2) \d x +c_0}},
\end{align}
with a positive $c_0$ (which guarantees that the function $r$ has a positive lower bound), and reformulate \eqref{pde} as 
\begin{subequations}\label{IEQ} 
\begin{alignat}{2} 
\i\partial_tu - \Delta u - r g(u)u &= 0 &&\qquad \mbox{in}\,\,\,\Omega\times(0,T], \label{IEQ-u}\\ 
\frac{\d r}{\d t} &= {\rm Re} \big(\mbox{$\frac{1}{2}$}g(u)u , \partial_tu\big) 
&&\qquad \mbox{in}\,\,\,\Omega\times(0,T], \label{IEQ-r}\\ 
u &=0 &&\qquad \mbox{on}\,\,\,\partial\Omega\times(0,T],\\ 
u =u_0, \quad r &=r_0 &&\qquad\mbox{in}\,\,\,\Omega\times \{0\}, 
\end{alignat} 
\end{subequations} 
where $r_0=\sqrt{ \mbox{$\int_\Omega \frac12$} F(|u_0|^2) \d x+c_0}$. The mass and energy conservation in the SAV formulation are
\begin{align}\label{SAV-mass}
\frac{\d}{\d t}\int_\Omega |u|^2\d x =0, \qquad \mbox{and}\qquad 
\frac{\d}{\d t}\bigg(\frac12\int_\Omega |\nabla u|^2 \d x -  r^2 + c_0\bigg) =0.
\end{align}

\subsection{Space-time finite element spaces}
Let $\mathcal{T}_h$ be a shape-regular and quasi-uniform triangulation of $\Omega$ with mesh size $h\in (0,1)$ and $\{t_n\}_{n=0}^N$ be a uniform partition of $[0,T]$ with the time step size $\tau\in (0,1)$, where $N$ is a positive integer and hence $\tau=\frac{T}{N}$. 
For an integer $p\ge 1$ we denote by $\mathbb{Q}^p$ the space of complex-valued polynomials of degree $\le p$ in space, and we denote by $S_h$ the complex-valued Lagrange finite element space subject to the triangulation of $\Omega$, defined by  
\begin{align*}
S_h&=\bigl\{v\in C(\overline\Omega) :\, 
v|_{K}\in \mathbb{Q}^p\,\,\,\mbox{for all}\,\,\, K\in\mathcal{T}_h,\,\,\, v=0\,\,\,\mbox{on}\,\,\,\partial\Omega 
\bigr\} ,
\end{align*}
where $C(\overline\Omega)$ denotes the space of complex-valued uniformly continuous functions on $\Omega$.
Then $S_h$ is a complex Hilbert spaces with the inner product $(\cdot,\cdot)$ and norm $\|\cdot\|$. 

For an integer $k\ge 1$, let $\mathbb{P}^k$ denote the space of real-valued polynomials of degree $\le k$ in $t$. 
For a Banach space $X$, such as $X=L^2(\Omega)$ or $X=S_h$, we define the following tensor-product space:
\begin{align} 
\mathbb{P}^k\otimes  X:=
\mbox{span}\Bigl\{ p(t)\phi(x):\, p\in \mathbb{P}^k, \, \phi\in X \Big\}
 	=\Big\{ \mbox{$\sum_{j=0}^k$} t^j \phi_j: \, \phi_j\in  X \Big\} . 
\end{align}
Moreover, let $P_h:L^2(\Omega)\rightarrow S_h$ denote the $L^2$ projection operator defined by 
$$
\bigl(w-P_hw,v_h \bigr)=0\quad\forall\, v_h\in S_h,\,\,\forall\, w\in L^2(\Omega).
$$
The following stability properties are well-known (cf. \cite{Brenner-Scott}):
\begin{subequations} 
\begin{alignat}{2}
\|P_hw\| &\le \|w\|  &&\qquad\forall\, w\in L^2(\Omega) ,\\
\|P_hw\|_{H^1} &\le C\|w\|_{H^1} &&\qquad\forall\, w\in H^1_0(\Omega) , 
\label{Ph-H1-stability}
\end{alignat}
\end{subequations}
where $C$ depends only on the shape-regularity and quasi-uniformity of the mesh. 

We also introduce the global space-time finite element spaces
\begin{align} \label{globa-space_1}
X_{\tau,h}&=\{v_h\in C([0,T];S_h): v_h|_{I_n}\in \mathbb{P}^k\otimes S_h\,\,\,\mbox{for}\,\,\,n=1,\dots,N \},\\
Y_{\tau,h}&=\{q_h\in C([0,T]): q_h|_{I_n}\in \mathbb{P}^k \,\,\,\mbox{for}\,\,\,n=1,\dots,N \}.
\label{global_space_2}
\end{align}

\subsection{SAV--Gauss collocation finite element method}
Let $c_j$ and $w_j$, $j=1,\dots,k$, be the nodes and weights of the $k$-point Gauss quadrature
rule in the interval $[-1,1]$ (see \cite[Table 3.1]{Shen2011Spectral}), and let $t_{nj}=t_{n-1}+(1+c_j)\tau/2$, $j=1,\dots,k$ denote the Gauss points in the interval $I_n=[t_{n-1},t_n]$.
We define the following Gauss collocation finite element method for \eqref{IEQ}. 

\medskip
\noindent
{\bf Main Algorithm} 
 
{\em Step 1:} Set $u_{h}^{0}:=I_{h}u_{0}$ and $r_{h}^{0}:=r_{0}$, where $I_h$ is the Lagrange interpolation operator onto the finite element space. Determine $(u_h,r_h)\in X_{\tau,h}\times Y_{\tau,h}$ by the following two steps.

\smallskip
{\em Step 2:} 
For $n=1,2,\cdots, N$, define $\bigl\{(u_h(t_{nj}) , r_h(t_{nj}) ) \bigr\}_{j=1}^k\subset S_h \times \R $ by solving recursively (in $n$) the following nonlinear (algebraic) system: 
\begin{subequations}\label{GL}
\begin{align}\label{GL-uh}
\i\bigl(\partial_t u_h(t_{nj}), v_h \bigr)
&+\bigl(\nabla u_h(t_{nj}) ,\nabla v_h \bigr) \\  
& - \bigl(r_h(t_{nj}) g(u_h (t_{nj})) u_h(t_{nj}) ,v_h \bigr) = 0,  \qquad\forall\, v_h\in S_h, \nonumber \\
\partial_t r_h(t_{nj})  &=
\frac{1}{2}{\rm Re} \big(g(u_h(t_{nj}))u_h(t_{nj}),\partial_t u_h(t_{nj}) \big) ,
\label{GL-rh} \\
u_h(t_{n-1}) &=u_h^{n-1} \quad\mbox{and}\quad
r_h(t_{n-1}) =r_h^{n-1} \label{GL-initial}.
\end{align}
\end{subequations}

\smallskip
{\em Step 3:} Set $u_h^{n}:=u_h(t_n)$ and $r_h^{n}:=r_h(t_n)$.

\medskip  
\begin{remark}
{\upshape  
(a) We note that in \eqref{GL-uh} and \eqref{GL-rh}, $\partial_t u_h(t_{nj})=\partial_t u_h(t)|_{t=t_{nj}}$ and 
$\partial_t r_h(t_{nj})=\partial_t r_h(t)|_{t=t_{nj}}$. Main Algorithm really computes  $\bigl\{(u_h(t_{nj}) , r_h(t_{nj}) )\bigr\}_{j=1}^k$ for each $n\geq 1$, however, 
since any $k$th order polynomial on $I_n$ is uniquely determined by its initial value 
at $t_{n-1}$ and its values at the $k$ Gauss points $t_{nj}$, $j=1,\dots,k$, then the Gauss-point values generated by
Main Algorithm uniquely determine the pair $(u_h,r_h)\in X_{\tau,h}\times Y_{\tau,h}$.

(b) Each of \eqref{GL-uh} and \eqref{GL-rh} consists of nonlinear algebraic equations, 
note that the test function $v_h$ can be different for different $j$, and one ``side/initial condition" is prescribed for 
each of $u_h$ and $r_h$ for each $n$. The number of equations imposed is the same
as the degree of freedoms which equals the dimension of the space $\mathbb{P}^k\otimes S_h$
for each $n$.   

(c) Main Algorithm can be obtained by applying the Gauss quadrature rule (in time) to a 
(continuous) space-time finite element method for \eqref{IEQ}. See Section \ref{sec-3.1}.

(d) In practical computation, we solve the solution of the nonlinear scheme \eqref{GL} by Newton's method: For given $\bigl\{(u_h^{\ell-1}(t_{nj}) , r_h^{\ell-1}(t_{nj}) ) \bigr\}_{j=1}^k\subset S_h \times \R $, find 
$$\bigl\{(u_h^{\ell}(t_{nj}) , r_h^{\ell}(t_{nj}) ) \bigr\}_{j=1}^k\subset S_h \times \R $$ satisfying the linearized equations 
\begin{subequations}\label{GL-Newton}
\begin{align} 
\i\bigl(\partial_t u_h^{\ell}(t_{nj}), v_h \bigr)
&+\bigl(\nabla u_h^{\ell}(t_{nj}) ,\nabla v_h \bigr) \\  
=&  \bigl(r_h^{\ell}(t_{nj}) g(u_h^{\ell-1} (t_{nj})) u_h^{\ell-1}(t_{nj}) ,v_h \bigr) \nonumber\\
&
+\bigl(r_h^{\ell-1}(t_{nj}) g_1(u_h^{\ell-1}(t_{nj}))(u_h^{\ell}(t_{nj})-u_h^{\ell-1}(t_{nj})) ,v_h \bigr) 
\nonumber\\
&
+\bigl(r_h^{\ell-1}(t_{nj})  g_2(u_h^{\ell-1}(t_{nj}))(\bar u_h^{\ell}(t_{nj})-\bar u_h^{\ell-1}(t_{nj})),v_h \bigr),  \qquad\forall\, v_h\in S_h, \nonumber \\
\partial_t r_h^{\ell}(t_{nj})  = &
\frac{1}{2}{\rm Re} \big(g(u_h^{\ell-1}(t_{nj})) u_h^{\ell-1}(t_{nj}),\partial_t u_h^{\ell}(t_{nj}) \big) \\
&+
\frac{1}{2}{\rm Re} \big(g_1(u_h^{\ell-1}(t_{nj}))(u_h^{\ell}(t_{nj})-u_h^{\ell-1}(t_{nj})) ,\partial_t u_h^{\ell-1}(t_{nj}) \big) \nonumber\\
&+
\frac{1}{2}{\rm Re} \big(g_2(u_h^{\ell-1}(t_{nj}))(\bar u_h^{\ell}(t_{nj})-\bar u_h^{\ell-1}(t_{nj})) ,\partial_t u_h^{\ell-1}(t_{nj}) \big) \nonumber\\
u_h^{\ell}(t_{n-1}) =& u_h^{n-1} \quad\mbox{and}\quad
r_h^{\ell}(t_{n-1}) =r_h^{n-1},
\end{align}
\end{subequations}
where 
$$
g_1(u):= \partial_u [g(u)u]
\quad\mbox{and}\quad
g_2(u):= \partial_{\bar u} [g(u)u],
$$
and $\partial_{\bar u}$ denotes the differentiation with respect to $\bar u$ in the expression of 
$$
g(u)u
=\frac{f(u\bar u) u }{ \sqrt{ \mbox{$\int_\Omega $} \frac12 F(u\bar u) \d x +c_0}} . 
$$
The iteration in $\ell$ is set to stop when the desired tolerance error is achieved. 
}

\end{remark}


\section{Conservation, stability and consistency analysis}\label{sec-3}

\subsection{A reformulation of scheme \eqref{GL-uh}--\eqref{GL-rh}}\label{sec-3.1}
In this subsection, we present several integral identities, including a reformulation of Main Algorithm, and inequalities related to the proposed numerical method. These identities and inequalities will be used in the subsequent analysis of existence, uniqueness and convergence of numerical solutions. 

Consider the interval $I_n=[t_{n-1},t_n]$, then we define $P_\tau^n: L^2(I_n;L^2(\Omega))\rightarrow \mathbb{P}^{k-1}\otimes L^2(\Omega)$ to be the $L^2$ projection defined by 
\begin{align}
\label{l2-pro}
\int_{I_n} (u - P_\tau^n u, v)\,\d t=0\quad\forall\,v \in \mathbb{P}^{k-1} \otimes L^2(\Omega) .     
\end{align}
Thus $u - P_\tau^n u$ is orthogonal to all temporal polynomials of degree $\le k-1$, which means that if $u\in \mathbb{P}^{k} \otimes L^2(\Omega)$ then 
\begin{align}\label{l2-pro-rp}
u - P_\tau^n u = \phi_{n-1} L_k,    
\end{align}
where $\phi_{n-1} \in L^2(\Omega)$ and 
\begin{align}\label{def-Lk-t}
L_k (t) := \widehat{L}_k \bigg(\frac{2t - t_{n-1} - t_n}{\tau}\bigg)
\end{align}
is the shifted Legendre polynomial (orthogonal to polynomials of lower degree on $I_n$). The temporal $L^2$ projection  operator $P_\tau^n$ has the following approximation property (cf. \cite{Canuto2007Spectral}): 
\begin{align}\label{approx-l2-pro}
&\max_{t\in I_n} \|v - P_\tau^nv\|_{X}    
\le C\tau^m \max_{t\in I_n} \|\partial_t^m v\|_{X} ,
\quad 0\le m\le k, 
\end{align}  
for all $v\in C^k([0,T];X)$, where $X=\R$ or $X=H^s(\Omega)$ for some $s\in\R$. 

Since the $k$-point Gauss quadrature holds exactly for polynomials of degree $2k-1$  (cf. \cite[p.~222]{Golub1969Calculation}), and the Gauss points $t_{nj}$, $j=1,\dots,k$, are the roots of the Legendre polynomial $L_k(t)$ (cf. \cite[p.~33]{Kopriva2009Implementing}), it follows that the following two identities hold: 
\begin{alignat}{2}
\int_{I_n} v(t)\d t
&= \frac{\tau}{2} \sum_{j=1}^{k} v(t_{nj}) w_{j} 
&&\qquad \forall\, v\in \mathbb{P}^{2k-1} \otimes S_h ,
\label{Gauss-integral} \\
v(t_{nj})
&= P_\tau^n v(t_{nj}) &&\qquad \forall\, v\in \mathbb{P}^{k} \otimes S_h .
\label{Ptau-tnj}
\end{alignat}

By choosing $v_h= \frac{\tau}{2} v_h(t_{nj}) w_j$ in \eqref{GL-uh} and summing up the results for $j=1,\dots,k$, and using \eqref{Gauss-integral}--\eqref{Ptau-tnj} in the first two terms, we obtain the following integral identity:  
\begin{align}\label{uh-test-vh}
&\int_{I_n} \i \bigl(\partial_tu_h,v_h \bigr) \d t 
+ \int_{I_n} (\nabla P_\tau^n u_h ,\nabla v_h) \d t \\
&\qquad 
- \frac{\tau}{2} \sum_{j=1}^k w_j (r_h(t_{nj})g(u_h(t_{nj})) u_h(t_{nj}) , v_h(t_{nj})) = 0  
\qquad \forall\, v_h\in  \mathbb{P}^{k}\otimes S_h . \nonumber
\end{align}
Similarly, multiplying \eqref{GL-rh} by $\frac{\tau}{2} q_h(t_{nj})w_j$ and summing up the results for $j=1,\dots,k$, and using \eqref{Gauss-integral} in the first term, we have 
\begin{align}\label{rh-test-qh}
&\int_{I_n} \partial_tr_hq_h \d t 
=\frac{\tau}{2} \sum_{j=1}^k\frac{w_j}{2}{\rm Re} \big(g(u_h(t_{nj})) u_h(t_{nj}),\partial_t u_h(t_{nj})\, q_h(t_{nj})\big)
\quad\forall\, q_h\in  \mathbb{P}^{k} . 
\end{align}
\eqref{uh-test-vh}--\eqref{rh-test-qh} provides a reformulation of Main Algorithm. 
	It will be crucially used to show mass and energy conservations,
as well as existence, uniqueness and convergence of numerical solutions. 

From \eqref{l2-pro-rp} we get
\begin{align*}
\|\phi_{n-1} \|
&= \frac{1}{|L_k(t_{n-1})|}
\|u_h(t_{n-1})-P_\tau^nu_h(t_{n-1})\| \\
&\le
C\|u_h(t_{n-1})\|
+C\bigg(\frac{1}{\tau}\int_{I_n}\|P_\tau^nu_h(t)\|^2 \d t\bigg)^{\frac12} ,
\end{align*}
where we have used the inverse inequality in time. Thus, by using \eqref{l2-pro-rp} again, we obtain the following inequality: 
\begin{align}\label{L2-uh-Phuh}
\int_{I_n} \|u_h\|^2 \d t
&\le 
C\int_{I_n} \|P_\tau^nu_h\|^2 \d t
+C\tau \|u_h(t_{n-1})\|^2 
\qquad\forall\, u_h\in \mathbb{P}^k\otimes S_h. 
\end{align}
By using the two identities \eqref{Gauss-integral}--\eqref{Ptau-tnj}, one can also prove the following inequality:
\begin{align}\label{sum-inequality}
\frac{\tau}{2} \sum_{j=1}^k w_j
\|v_h(t_{nj})\|^2
=\int_{I_n} \|P_\tau^nv_h(t)\|^2\d t
\le \int_{I_n} \|v_h(t)\|^2\d t 
\quad\forall\,
v_h\in \mathbb{P}^k\otimes S_h.
\end{align}
The inequalities \eqref{L2-uh-Phuh}--\eqref{sum-inequality} will be frequently used in the subsequent error analysis. 

\subsection{Mass and energy conservation properties}

In this subsection, we prove the following conservation properties of the numerical solution, 
which comprise of the first main theorem of this paper.  

\begin{theorem}\label{THM:conservation}
Let $(u_h,r_h) \in X_{\tau,h}\times Y_{\tau,h}$ be a solution of Main Algorithm, 
then the following mass and energy conservations hold: 
\begin{align*}
\begin{aligned}
\frac12\|u_h(t_n)\|^2 &= \frac12\|u_h(t_{0})\|^2 &&\mbox{for } n\geq 1, \\
\frac12  \|\nabla u_h(t_n)\|^2 - |r_h(t_n)|^2 + c_0 &= 
\frac12  \|\nabla u_h(t_{0})\|^2 - |r_h(t_{0})|^2  + c_0 &&\mbox{for } n\geq 1.
\end{aligned}
\end{align*}
\end{theorem}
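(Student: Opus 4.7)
The plan is to prove both identities by a telescoping argument on each interval $I_n=[t_{n-1},t_n]$: for each conservation law I will exhibit a test function that, when plugged into the collocation equation \eqref{GL-uh}, makes the left-hand side collapse to the time derivative of the conserved quantity evaluated at the Gauss nodes, and then invoke the Gauss quadrature exactness \eqref{Gauss-integral} to promote a pointwise-at-nodes vanishing to an integral identity on $I_n$.

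\textbf{Mass.} For each fixed $n$ and $j$, I would take $v_h=u_h(t_{nj})\in S_h$ in \eqref{GL-uh} and extract the imaginary part. The term $(\nabla u_h(t_{nj}),\nabla u_h(t_{nj}))=\|\nabla u_h(t_{nj})\|^2$ is real, and the nonlinear term $r_h(t_{nj})\int_\Omega g(u_h(t_{nj}))|u_h(t_{nj})|^2\,\mathrm{d}x$ is also real (since $r_h\in\mathbb{R}$ and $g$ is real-valued). Hence only $\mathrm{Re}(\partial_t u_h(t_{nj}),u_h(t_{nj}))=\tfrac12\tfrac{\mathrm d}{\mathrm dt}\|u_h\|^2\big|_{t=t_{nj}}$ survives, yielding $\tfrac{\mathrm d}{\mathrm dt}\|u_h(t)\|^2\big|_{t=t_{nj}}=0$ for $j=1,\dots,k$. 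Since $u_h|_{I_n}\in\mathbb{P}^k\otimes S_h$, the scalar function $t\mapsto\mathrm{Re}(\partial_t u_h(t),u_h(t))$ is a polynomial of degree at most $2k-1$, so the $k$-point Gauss rule integrates it exactly by \eqref{Gauss-integral}; this gives $\int_{I_n}\tfrac{\mathrm d}{\mathrm dt}\|u_h\|^2\,\mathrm dt=0$ and hence $\|u_h(t_n)\|^2=\|u_h(t_{n-1})\|^2$. Iterating in $n$ delivers the first identity.

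\textbf{Energy.} This time I would test \eqref{GL-uh} with $v_h=\partial_t u_h(t_{nj})\in S_h$ and extract the real part. The term $\mathrm{i}(\partial_t u_h(t_{nj}),\partial_t u_h(t_{nj}))=\mathrm{i}\|\partial_t u_h(t_{nj})\|^2$ is purely imaginary and drops out; the Laplacian term contributes $\mathrm{Re}(\nabla u_h(t_{nj}),\nabla\partial_t u_h(t_{nj}))=\tfrac12\tfrac{\mathrm d}{\mathrm dt}\|\nabla u_h\|^2\big|_{t=t_{nj}}$; and the nonlinear term, using $r_h(t_{nj})\in\mathbb{R}$ together with the companion collocation equation \eqref{GL-rh}, equals
\begin{equation*}
r_h(t_{nj})\,\mathrm{Re}\bigl(g(u_h(t_{nj}))u_h(t_{nj}),\partial_t u_h(t_{nj})\bigr)
= 2r_h(t_{nj})\,\partial_t r_h(t_{nj})
= \frac{\mathrm d}{\mathrm dt}|r_h|^2\bigg|_{t=t_{nj}}.
\end{equation*}
This produces the pointwise identity $\tfrac12\tfrac{\mathrm d}{\mathrm dt}\|\nabla u_h\|^2=\tfrac{\mathrm d}{\mathrm dt}|r_h|^2$ at every Gauss node $t_{nj}$. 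Both sides are polynomials in $t$ of degree at most $2k-1$ on $I_n$ (as products of functions of degrees $k-1$ and $k$), so \eqref{Gauss-integral} again converts the nodal vanishing into an exact integral identity on $I_n$, and telescoping in $n$ yields the second identity.

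\textbf{Main obstacle.} There is no essential analytic difficulty; the whole proof is algebraic, driven by two simple facts: the cancellations one obtains by taking appropriate real/imaginary parts (a Hamiltonian-style symplectic structure, supplemented by the $r_h$-equation \eqref{GL-rh}), and the exactness of the $k$-point Gauss rule on $\mathbb{P}^{2k-1}$. The only point requiring care is the polynomial-degree bookkeeping: confirming that $\mathrm{Re}(\partial_t u_h,u_h)$, $\mathrm{Re}(\nabla\partial_t u_h,\nabla u_h)$ and $r_h\,\partial_t r_h$ are all polynomials of degree at most $2k-1$, so that \eqref{Gauss-integral} is applicable. Once this is in place, the proof reduces to two telescoping summations.
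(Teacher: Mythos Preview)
Your proposal is correct and follows essentially the same approach as the paper: both test \eqref{GL-uh} with $u_h$ (imaginary part) for mass and with $\partial_t u_h$ together with \eqref{GL-rh} (real part) for energy, and both hinge on the exactness of the $k$-point Gauss rule on $\mathbb{P}^{2k-1}$. The only cosmetic difference is that the paper first packages the collocation equations into the integral reformulation \eqref{uh-test-vh}--\eqref{rh-test-qh} (which introduces the projection $P_\tau^n$ and then argues $\mathrm{Im}\int_{I_n}(\nabla P_\tau^n u_h,\nabla u_h)\,\mathrm dt=0$), whereas you work pointwise at the Gauss nodes and invoke \eqref{Gauss-integral} directly; your route is arguably slightly more transparent for this particular result.
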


\begin{proof}
Setting $v_h=u_h\in \mathbb{P}^{k}\otimes S_h$ in \eqref{uh-test-vh} and taking the imaginary part yield 
\begin{align}\label{mass-conserv-1}
 {\rm Im} \int_{I_n} \i \bigl(\partial_t u_h, u_h \bigr) \d t 
&= - {\rm Im} \int_{I_n} (\nabla P_\tau^n u_h ,\nabla u_h) \d t   \\
&\quad + {\rm Im} \bigg[\frac{\tau}{2} \sum_{j=1}^k w_j 
(r_h(t_{nj})g(u_h(t_{nj})), |u_h(t_{nj})|^2)\bigg] =0, \nonumber
\end{align}
where we have used the definition of the projection operator $P_\tau^n$, which implies 
$$
{\rm Im} \int_{I_n} (\nabla P_\tau^n u_h ,\nabla u_h) \d t
= {\rm Im} \int_{I_n} (\nabla P_\tau^n u_h ,\nabla P_\tau^n u_h) \d t
=0 .
$$
Then the mass conservation follows from \eqref{mass-conserv-1} and the identity 
$$
{\rm Im}\int_{I_n} \i \bigl( \partial_tu_h,u_h \bigr) \d t
= \frac12\|u_h(t_n)\|^2 - \frac12\|u_h(t_{n-1})\|^2 .
$$

Alternatively, setting $v_h=\partial_tu_h$ and $q_h=2 r_h$ in \eqref{uh-test-vh} and \eqref{rh-test-qh}, respectively, and taking the real parts yield
\begin{align} 
&{\rm Re}\int_{I_n} (\nabla P_\tau^n u_h, \nabla\partial_tu_h) \d t 
= \frac{\tau}{2} \,{\rm Re}\sum_{j=1}^kw_j (r_h(t_{nj})g(u_h(t_{nj})) u_h(t_{nj}) , \partial_tu_h(t_{nj}))   \\
&|r_h(t_n)|^2 - |r_h(t_{n-1})|^2
= \frac{\tau}{2} \,{\rm Re}\sum_{j=1}^k w_j  \big(r_h(t_{nj})g(u_h(t_{nj})) u_h(t_{nj}),\partial_tu_h(t_{nj}) \big) . \label{energy-r-1}
\end{align}
Since 
\begin{align*}
{\rm Re}\int_{I_n} (\nabla P_\tau^n u_h, \nabla\partial_tu_h) \d t 
&={\rm Re}\int_{I_n} (P_\tau^n  \nabla u_h, \nabla\partial_tu_h) \d t 
= {\rm Re}\int_{I_n} (\nabla u_h, \nabla\partial_tu_h) \d t \\
& =
\frac12 \|\nabla u_h(t_n)\|^2 - \frac12 \|\nabla u_h(t_{n-1})\|^2 ,
\end{align*}
it follows that 
\begin{align}\label{energy-u-1}
&\frac12 \|\nabla u_h(t_n)\|^2 - \frac12 \|\nabla u_h(t_{n-1})\|^2 \\
&\qquad = \frac{\tau}{2} \,{\rm Re}\sum_{j=1}^k w_j \bigl( r_h(t_{nj})g(u_h(t_{nj})) u_h(t_{nj}) , \partial_t u_h(t_{nj})\bigr) . 
\nonumber
\end{align}
Subtracting \eqref{energy-r-1} from \eqref{energy-u-1} yields 
\begin{align}\label{con-energy}
\frac12  \|\nabla u_h(t_n)\|^2 -  |r_h(t_n)|^2 = 
\frac12  \|\nabla u_h(t_{n-1})\|^2 - |r_h(t_{n-1})|^2 
\quad \mbox{for}\,\,\, n\ge 1. 
\end{align}
Thus, the energy conservation holds. The proof is complete.
\end{proof}

\subsection{An upper bound of mass at internal stages}\label{section:upper-bound}

In this subsection, we prove that the average mass of numerical solutions at internal stages has an upper bound unconditionally (independent of the regularity of solutions). This property furthermore strengthens the stability of numerical solutions when the exact solution is not smooth (for example, close to blow up). 

\begin{theorem}\label{THM:mass-bound}
Let $(u_h,r_h) \in X_{\tau,h}\times Y_{\tau,h}$ be a solution of Main Algorithm,
then the following inequalities hold: 
\begin{subequations}
\begin{align}\label{interior_est_1}
\max_{1\le n\le N} \frac{1}{\tau} \int_{I_n} \|P_\tau^nu_h\|^2 \d t 
&\le \|u_h(0)\|^2 , \\
\max_{1 \le n\le N}\max_{1\le j\le k}\|u_h(t_{nj})\| &\le C\|u_h(0)\| ,  \label{interior_est_2}
\end{align}
\end{subequations}
where $C$ is a constant independent of $\tau$, $h$ and the regularity of the solution. 
\end{theorem}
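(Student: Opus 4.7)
My strategy is to combine a pointwise mass-balance identity at each Gauss node with the polynomial structure of $g(t):=\|u_h(t)\|^2$, exploiting the orthogonality of the Legendre polynomial $L_k$ (whose zeros are the Gauss nodes) and the nonnegativity of the top coefficient of $g$ (which reflects that $g$ is a squared $L^2$-norm).

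First I would test the pointwise collocation equation \eqref{GL-uh} at $t=t_{nj}$ with $v_h=u_h(t_{nj})\in S_h$ and take imaginary parts. Since $r_h(t_{nj})$ is real and $g(u_h(t_{nj}))(x)=f(|u_h|^2)/\sqrt{\cdots}$ is a real-valued function, both $(\nabla u_h,\nabla u_h)(t_{nj})$ and $(r_h g(u_h)u_h,u_h)(t_{nj})$ are real, so the imaginary part reduces to $\mathrm{Re}(\partial_t u_h(t_{nj}),u_h(t_{nj}))=0$. Thus $g'(t_{nj})=0$ for every $j=1,\dots,k$, and consequently $g'(t)=L_k(t)Q(t)$ for some $Q\in\mathbb{P}^{k-1}$. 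Moreover the leading coefficient of $g$ equals $\|c_k\|^2\ge 0$, where $c_k\in S_h$ is the leading coefficient (in $t$) of $u_h|_{I_n}$.

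The heart of the proof is the clean algebraic identity
\[
\tfrac{\tau}{2}\sum_{j=1}^k w_j\|u_h(t_{nj})\|^2 \;=\; \tau\|u_h(t_{n-1})\|^2 - c\tau, \qquad c:=\|c_k\|^2/\ell_k^2\ge 0,
\]
where $\ell_k>0$ is the leading coefficient of $L_k$. This can be obtained in two parts. (i) Decomposing $g=cL_k^2+\tilde g$ with $\tilde g\in\mathbb{P}^{2k-1}$ and using Gauss exactness on $\tilde g$ together with $L_k(t_{nj})=0$ gives $\frac{\tau}{2}\sum_j w_j g(t_{nj})=\int_{I_n}g\,\d t-c\tau/(2k+1)$. (ii) Writing $g(t)=g(t_{n-1})+\int_{t_{n-1}}^t L_k Q\,\d s$ and applying Fubini yields $\int_{I_n}g\,\d t=\tau g(t_{n-1})+\int_{I_n}(t_n-s)L_k(s)Q(s)\,\d s$; decomposing $(t_n-s)Q(s)=\alpha L_k(s)+R(s)$ with $R\in\mathbb{P}^{k-1}$, matching leading coefficients forces $\alpha=-2kc$, and orthogonality collapses the integral to $-2kc\tau/(2k+1)$. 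Adding (i) and (ii) produces the claimed identity.

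Since $c\ge 0$ this gives $\frac{\tau}{2}\sum_j w_j\|u_h(t_{nj})\|^2\le\tau\|u_h(t_{n-1})\|^2$. Rewriting the left side via \eqref{sum-inequality} as $\int_{I_n}\|P_\tau^n u_h\|^2\,\d t$, dividing by $\tau$, and invoking mass conservation (Theorem~\ref{THM:conservation}) to replace $\|u_h(t_{n-1})\|$ by $\|u_h(0)\|$ proves \eqref{interior_est_1}. For \eqref{interior_est_2}, since the Gauss weights $w_j$ depend only on $k$ and are uniformly positive, $\|u_h(t_{nj})\|^2\le(\min_j w_j)^{-1}\sum_{j'}w_{j'}\|u_h(t_{nj'})\|^2\le C\|u_h(0)\|^2$. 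The main obstacle I expect is isolating the precise algebraic cancellation in the middle step: the mere facts $g'(t_{nj})=0$ and $g(t_n)=g(t_{n-1})$ are not enough to bound $\sum_j w_j g(t_{nj})$ by $2g(t_{n-1})$, and one has to genuinely use the sign information $\|c_k\|^2\ge 0$ that comes from $g$ being a squared norm.
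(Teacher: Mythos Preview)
Your proof is correct but follows a genuinely different route from the paper's. The paper never isolates the pointwise fact $g'(t_{nj})=0$; instead it works directly with the quantity $\int_{I_n}\|P_\tau^n u_h\|^2\,\d t$, expands it via the fundamental theorem of calculus and Fubini into three pieces $J_1+J_2+J_3$, and kills the dangerous piece $J_2$ by testing the integral reformulation \eqref{uh-test-vh} with the specific function $v_h=(t_n-t)P_\tau^n u_h$ and taking the imaginary part. The remaining pieces are handled by Young's inequality and integration by parts, yielding the same bound $\int_{I_n}\|P_\tau^n u_h\|^2\,\d t\le\tau\|u_h(t_{n-1})\|^2$ without ever invoking the sign of the top coefficient of $g$. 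For \eqref{interior_est_2} the paper goes through the auxiliary inequality \eqref{L2-uh-Phuh} and a temporal inverse inequality, obtaining the bound for all $t\in I_n$, whereas you get it at the Gauss nodes directly from the positivity of the weights.

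Your argument is more elementary and in fact sharper: you obtain the exact identity $\frac{\tau}{2}\sum_j w_j\|u_h(t_{nj})\|^2=\tau\|u_h(t_{n-1})\|^2-c\tau$ with an explicit nonnegative defect, rather than just an inequality. The paper's argument, on the other hand, is the same ``test with $(t_n-t)P_\tau^n(\cdot)$'' template that is reused verbatim in the error analysis of Lemma~\ref{Lemma:M-theta} (for $\|P_\tau^n e_h^u\|$ and $|P_\tau^n e_h^r|$), so it doubles as a warm-up for the harder estimates. Your algebraic approach would not transfer as cleanly to those later estimates, where the analogue of $g$ is no longer a squared norm and no pointwise derivative vanishes.
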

\begin{proof}
By the definition of the temporal $L^2$ projection $P_\tau^n$,  we get
\begin{align}\label{L2L2Pu-M}
\int_{I_n} &\|P_\tau^n u_h(t) \|^2 \d t 
={\rm Re} \int_{I_n} (u_h(t), P_\tau^nu_h(t)) \d t \\
&={\rm Re}\int_{I_n} (u_h(t_{n-1}), P_\tau^n u_h(t_{n-1})) \d t \nonumber  \\
&\quad +{\rm Re}\int_{I_n} 
\int_{t_{n-1}}^t [(\partial_s u_h(s), P_\tau^n u_h(s))
+(u_h(s), \partial_s P_\tau^nu_h(s))] \d s\d t \nonumber \\
&={\rm Re}\, (u_h(t_{n-1}), P_\tau^n u_h(t_{n-1}))\tau  
+ {\rm Re}\int_{I_n} (\partial_t u_h(t), (t_n-t)P_\tau^n u_h(t)) \d t  \nonumber \\
&\quad + {\rm Re}\int_{I_n} (u_h(t), (t_n-t)\partial_t P_\tau^n u_h(t)) \d t \nonumber\\
&=: J_1+J_2+J_3, \nonumber
\end{align}
where we have interchanged the order of integration in deriving the second to last equality. 
Using H\"older's and Young's inequalities, we have 
\begin{align*}
J_1 &={\rm Re}\, (u_h(t_{n-1}), P_\tau^n u_h(t_{n-1}))\tau 
\le\|u_h(t_{n-1})\|\|P_\tau^n u_h(t_{n-1})\|\tau \\
&\le \frac\tau2\|u_h(t_{n-1})\|^2 + \frac\tau2\|P_\tau^n u_h(t_{n-1})\|^2 .
\end{align*}
Setting $v_h=(t_n-t)P_\tau^n u_h$ in \eqref{uh-test-vh} and taking the 
imaginary part yield  
\begin{align*}
J_2&=
{\rm Re}\int_{I_n} 
(\partial_t u_h(t), (t_n-t)P_\tau^n u_h(t))\d t 
={\rm Im}\int_{I_n} \i\bigl( \partial_t u_h(t), (t_n-t)P_\tau^n u_h(t) \bigr)\d t \\
&={\rm Im}\, \,\tau\sum_{j=1}^k w_j \bigl(r_h(t_{nj})g(u_h(t_{nj})) , |u_h(t_{nj})|^2 \bigr) (t_n-t_{nj}) \\
&\qquad -{\rm Im}\int_{I_n} \|\nabla P_\tau^nu_h\|^2 (t_n-t)  \d t =0 ,
\end{align*}
where we have used \eqref{Ptau-tnj} in deriving the first term on the right-hand side.
Since $(t_n-t)\partial_t P_\tau^n u_h(t)$ is a polynomial of degree $k-1$ in $t$, it follows that 
$$
\int_{I_n} (u_h(t), (t_n-t)\partial_t P_\tau^n u_h(t)) \d t 
=\int_{I_n} (P_\tau^n u_h(t), (t_n-t)\partial_t P_\tau^n u_h(t)) \d t .
$$
Hence, 
\begin{align*}
J_3
&={\rm Re} \int_{I_n} (P_\tau^n u_h(t), (t_n-t)\partial_t P_\tau^n u_h(t)) \d t 
=\int_{I_n}  \frac12 \frac{\d}{\d t} \|P_\tau^n u_h(t)\|^2(t_n-t) \d t \nonumber\\
&=-\frac{\tau}{2} \|P_\tau^n u_h(t_{n-1})\|^2 
+ \int_{I_n}  \frac12 \|P_\tau^n u_h(t)\|^2 \d t . 
\end{align*}
Substituting the estimates of $J_1$, $J_2$ and $J_3$ into \eqref{L2L2Pu-M}, we obtain
\begin{align}\label{L2L2-Ptau-uh}
&\int_{I_n} \|P_\tau^nu_h\|^2 \d t 
\le 
\tau \|u_h(t_{n-1})\|^2 
=\tau \|u_h(0)\|^2 ,
\end{align}
where the last equality follows from mass conservation proved in Theorem \ref{THM:conservation}. 
This proves \eqref{interior_est_1} holds.

Substituting \eqref{L2L2-Ptau-uh} into \eqref{L2-uh-Phuh} and using the mass conservation again, 
we obtain $\int_{I_n} \|u_h\|^2 \d t
\le C\tau \|u_h(0)\|^2$. Then, by using the inverse inequality, we obtain 
\begin{align*}
\max_{t\in I_n} \|u_h(t)\| \le C \|u_h(0)\|,
\end{align*}
which proves \eqref{interior_est_2}. The proof is complete.
\end{proof}

\subsection{Temporal and spatial Ritz projections}

Let $I_\tau^n u$ and $I_\tau^n r$ be the temporal Lagrange interpolation polynomials of $u$ and $r$, respectively, interpolated at the $k+1$ points $t_{n-1}$ and $t_{nj}$, $j=1,\dots,k$. Both $I_\tau^n u$ and $I_\tau^n r$ are temporal polynomials of degree $\le k$. The one-dimensional temporal Lagrange interpolation operator $I_\tau^n$ has the following approximation property (cf. \cite{Canuto2007Spectral}): 
\begin{align}\label{approx-In}
&\max_{t\in I_n}
\bigl(\|v - I_\tau^nv\|_{X} + \tau\|\partial_t (v - I_\tau^nv)\|_{X}\bigr)    
\le C\tau^{m+1} \max_{t\in I_n} \|\partial_t^{m+1}v\|_{X} 
\end{align}  
for all $v\in C^{m+1}([0,T];X),\, 0\le m\le k$, and $X=\R$ or $X=H^s(\Omega)$ for some $s\in\R$. 

To analyze the error of numerical solutions due to temporal discretization, we define a temporal Ritz projection operator $R_\tau^n: W^{1,\infty}(I_n;L^2(\Omega))\rightarrow \mathbb{P}^k\otimes L^2(\Omega)$ by the following two conditions: 
\begin{align}
\int_{I_n}(\partial_t(u-R_\tau^n u),v)\d t &= 0
\quad\forall\, v\in \mathbb{P}^{k-1} \otimes L^2(\Omega) , \label{Ritz-time} \\[3pt]
\label{Ritz-time-u0}
u(t_{n-1})-R_\tau^n u(t_{n-1})&=0 .
\end{align}
Clearly, $\partial_t R_\tau^n u $ is the temporal $L^2$ projection of $\partial_t u$ onto $\mathbb{P}^{k-1}\otimes L^2(\Omega)$. 
By using this property and the shifted Legendre polynomials defined in \eqref{def-Lk-t}, we can express the temporal Ritz projection as
\begin{align}
R_\tau^n u(t) = u(t_{n-1}) + \sum_{j=0}^{k-1} \frac{\int_{I_n} L_j(s) \partial_su(s) \d s}{\int_{I_n} |L_j(s)|^2\d s} \int_{t_{n-1}}^t L_j(s) \d s . 
\end{align}
This expression implies that if $X\subset L^2(\Omega)$ is a Banach space  and $u\in W^{1,\infty}(I_n;X)$, then $R_\tau^n u$ is automatically in $\mathbb{P}^k\otimes X$. 
Meanwhile, this temporal Ritz projection has the following approximation property. 
\begin{lemma}\label{Lemma:time-Ritz} 
Let $X=\R$ or $H^s(\Omega)$ for some $s\ge 0$. 
For $u\in W^{m+1,\infty}(I_n;X)$, with $0 \le m \le k$, the following approximation property holds:
\begin{align*}
\|u-R_\tau^n u\|_{L^{\infty}(I_n;X)}
+\tau \|\partial_t(u-R_\tau^n u)\|_{L^{\infty}(I_n;X)} 
\le
C\tau^{m+1} \|u\|_{W^{m+1,\infty}(I_n;X)} . 
\end{align*}
\end{lemma}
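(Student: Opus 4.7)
The plan is to reduce the Ritz projection error to the $L^2$ projection error of $\partial_t u$, since the two defining conditions \eqref{Ritz-time}--\eqref{Ritz-time-u0} make the Ritz projection an ``integrated'' $L^2$ projection of the derivative. Specifically, because $\partial_t R_\tau^n u \in \mathbb{P}^{k-1}\otimes X$ and \eqref{Ritz-time} is exactly the $L^2(I_n)$-orthogonality condition that characterizes $P_\tau^n \partial_t u$, one has the identity
\begin{equation*}
\partial_t R_\tau^n u = P_\tau^n(\partial_t u) \quad \text{on } I_n,
\end{equation*}
a fact that is essentially stated already in the paragraph introducing $R_\tau^n$.

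The derivative estimate then follows immediately by applying the $L^\infty(I_n;X)$ approximation property \eqref{approx-l2-pro} of $P_\tau^n$ to $v=\partial_t u$ with parameter $m$ (valid for $0\le m\le k$ since $P_\tau^n$ projects onto $\mathbb{P}^{k-1}\otimes L^2(\Omega)$): this yields
\begin{equation*}
\|\partial_t(u-R_\tau^n u)\|_{L^\infty(I_n;X)}
=\|\partial_t u-P_\tau^n\partial_t u\|_{L^\infty(I_n;X)}
\le C\tau^{m}\|\partial_t^{m+1}u\|_{L^\infty(I_n;X)},
\end{equation*}
so that the second term of the lemma is bounded by $C\tau^{m+1}\|u\|_{W^{m+1,\infty}(I_n;X)}$ after multiplying by $\tau$.

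For the function-value estimate I would exploit the interpolation condition \eqref{Ritz-time-u0} at the left endpoint of $I_n$. Writing, for any $t\in I_n$,
\begin{equation*}
u(t)-R_\tau^n u(t)=\int_{t_{n-1}}^{t}\partial_s(u-R_\tau^n u)(s)\,\d s,
\end{equation*}
and taking norms in $X$ under the integral, gives
\begin{equation*}
\|u-R_\tau^n u\|_{L^\infty(I_n;X)}
\le \tau\,\|\partial_t(u-R_\tau^n u)\|_{L^\infty(I_n;X)},
\end{equation*}
which combined with the previous bound completes the proof.

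The argument is essentially mechanical once the identity $\partial_t R_\tau^n u=P_\tau^n\partial_t u$ is in hand; the only subtle point is that one needs the $L^\infty(I_n;X)$ version of the temporal $L^2$-projection approximation, but this is precisely \eqref{approx-l2-pro} (a one-dimensional Jackson-type estimate via shifted Legendre polynomials on the reference interval). No issue with the range of $m$ arises either: $u\in W^{m+1,\infty}(I_n;X)$ implies $\partial_t u\in W^{m,\infty}(I_n;X)$, and the admissible range $0\le m\le k$ in \eqref{approx-l2-pro} matches that of the lemma exactly.
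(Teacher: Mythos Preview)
Your proof is correct and substantially more direct than the paper's. The paper proceeds in two stages that are both more elaborate than necessary for the stated $L^\infty(I_n;X)$ estimates: for the derivative estimate it first uses the Riesz map to rewrite the $L^2(\Omega)$-orthogonality \eqref{Ritz-time} as $H^s$-orthogonality, then applies a temporal inverse inequality together with the Galerkin-orthogonality argument
\[
\|\partial_t(u-R_\tau^n u)\|_{L^\infty(I_n;H^s)}\le C\inf_{g\in\mathbb{P}^k\otimes H^s}\|\partial_t(u-g)\|_{L^\infty(I_n;H^s)},
\]
choosing $g=I_\tau^n u$; for the function-value estimate it runs a duality argument with $\partial_t w=u-R_\tau^n u$, $w(t_n)=0$. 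You bypass both by exploiting directly the identity $\partial_t R_\tau^n u=P_\tau^n(\partial_t u)$ (already noted in the paper) together with the fixed left endpoint \eqref{Ritz-time-u0}, so the derivative bound is immediate from \eqref{approx-l2-pro} and the function bound follows by a one-line integration. Your route is the natural elementary one; the paper's duality argument would be the right tool if one wanted $L^2(I_n;X)$ estimates without passing through $L^\infty$, or if there were no interpolation condition at an endpoint, but neither complication is present here. One very minor point: \eqref{approx-l2-pro} is stated in the paper for $v\in C^k([0,T];X)$, whereas you apply it to $\partial_t u\in W^{m,\infty}(I_n;X)$; this is harmless since the one-dimensional projection estimate is standard under the weaker hypothesis, and the paper's own proof implicitly relies on the same relaxation.
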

\begin{proof}
We prove the result for the case $X=H^s(\Omega)$ with $s\ge 0$. To this end, we denote by $H^s(\Omega)'$ the dual space of $H^s(\Omega)$. Then, by the Riesz representation theorem, there exists a continuous linear bijection $J:H^s(\Omega)'\rightarrow H^s(\Omega)$ such that 
\begin{align}\label{u-v-J}
(u,Jv)_{H^s} = \langle u, v \rangle   \quad\forall\,u\in H^s(\Omega)\,\,\,\mbox{and}\,\,\, v\in H^s(\Omega)' ,
\end{align}
where $(\cdot,\cdot)_{H^s}$ is the inner product of $H^s(\Omega)$, and $\langle\cdot,\cdot\rangle$ denotes the pairing between $H^s(\Omega)$ and its dual space $H^s(\Omega)' $, satisfying 
$$
\langle w, v \rangle = (w,v) \quad\forall\, w\in H^s(\Omega),\,\,\,v\in L^2(\Omega)\hookrightarrow H^s(\Omega)' . 
$$
Then \eqref{Ritz-time} implies that  
\begin{align}
\int_{I_n}(\partial_t(u-R_\tau^n u), Jv)_{H^s} \d t &= 0
\quad\forall\, v\in \mathbb{P}^{k-1} \otimes L^2(\Omega) . \label{Ritz-time-1}  
\end{align}
Since $L^2(\Omega) $ is dense in $H^s(\Omega)'$, it follows that \eqref{Ritz-time-1} actually holds for all $v\in \mathbb{P}^{k-1} \otimes H^s(\Omega)'$. Since for any $w\in \mathbb{P}^{k-1} \otimes H^s(\Omega)$ there exists $v\in \mathbb{P}^{k-1} \otimes H^s(\Omega)'$ satisfying $Jv=w$, it follows that \eqref{Ritz-time-1} can be equivalently written as 
\begin{align}
\int_{I_n}(\partial_t(u-R_\tau^n u),w)_{H^s} \d t &= 0
\quad\forall\, w \in \mathbb{P}^{k-1} \otimes H^s(\Omega) . \label{Ritz-time-2}  
\end{align} 

By using the inverse inequality in time and the property \eqref{Ritz-time-2}, we have
\begin{align}
\|\partial_t (u - R_\tau^n u)\|^2_{L^{\infty}(I_n; H^s)}
\le& C \tau^{-1}\int_{I_n} \bigl(\partial_t (u - R_\tau^n u),  \partial_t (u - R_\tau^n u)
\bigr)_{H^s}\d t\\
=&  C \tau^{-1}\int_{I_n} \bigl(\partial_t (u - R_\tau^n u),  \partial_t (u - g) \bigr)_{H^s}\d t
\notag\\
\le&C \|\partial_t (u - R_\tau^n u)\|_{L^{\infty}(I_n; H^s)} 
\|\partial_t (u - g)\|_{L^{\infty}(I_n; H^s)}
\notag,
\end{align}
where $g$ can be an arbitrary function in $\mathbb{P}^k\otimes H^s(\Omega)$, which implies $ \partial_t (g - R_\tau^n u) \in \mathbb{P}^{k-1} \otimes H^s(\Omega) $. The inequality above implies 
\begin{align}
\| \partial_t (u - R_\tau^n u)\|_{L^{\infty}(I_n; H^s)}
\le& C \inf_{g \in \mathbb{P}^k\otimes H^s(\Omega)} 
\|\partial_t(u - g)\|_{L^{\infty}(I_n; H^s)}\\
\le& C\|\partial_t(u - I^n_\tau u)\|_{L^{\infty}(I_n; H^s)} . 
\notag
\end{align}
This and the approximation property \eqref{approx-In} together imply the following inequality: 
\begin{align}\label{time-Ritz-H1}
\| \partial_t (u - R_\tau^n u)\|_{L^{\infty}(I_n; H^s)}
\le C\tau^m \|u\|_{W^{m+1,\infty}(I_n;H^s)}.
\end{align}

To estimate $\|u - R_\tau^n u\|_{L^{\infty}(I_n; H^s)}$, we use a duality argument in time.
Let $w \in W^{1, \infty}(I_n; H^s(\Omega))$ be the solution of the following IVP:
\begin{align} 
\label{Ritz-time-dual}
\partial_t w(t) = u(t) - (R_\tau^n u)(t), \quad \mbox{and} \quad  
w(t_n)=0 .
\end{align}
Then, testing \eqref{Ritz-time-dual} by $J^{-1}(u - R_\tau^n u)$, with the operator $J$ defined in \eqref{u-v-J}, we obtain 
\begin{align}
\int_{I_n} \|u - R_\tau^n u\|_{H^s}^2 \d t
=& \int_{I_n} \bigl(\partial_t w,  u - R_\tau^n u
\bigr)_{H^s}\d t
\notag\\
=&- \int_{I_n} \bigl( w, \partial_t(u - R_\tau^n u)
\bigr)_{H^s}\d t 
\notag\\
=&- \int_{I_n} \bigl(w - P^n_\tau w,  \partial_t (u - R_\tau^n u)
\bigr)_{H^s}\d t 
\qquad\mbox{(here \eqref{Ritz-time-2} is used)} 
\notag\\
\le&  C \tau \|w - P^n_\tau w\|_{L^{\infty}(I_n; H^s)}  
\|\partial_t (u - R_\tau^n u)\|_{L^{\infty}(I_n; H^s)}
\notag\\
\le& C\tau^{m+2} \|\partial_t w\|_{L^{\infty}(I_n; H^s)}
\|u\|_{W^{m+1,\infty}(I_n;H^s)} ,
\notag
\end{align}
where we have used \eqref{time-Ritz-H1} in the last inequality. 
By applying the temporal inverse inequality and \eqref{Ritz-time-dual}, we have  
\begin{align}\label{eq3.30}
\|u - R_\tau^n u\|^2_{L^{\infty}(I_n; H^s)}
\le& C \tau^{-1}\|u - R_\tau^n u\|^2_{L^2(I_n; H^s)}\\
\le& C\tau^{m+1} \|\partial_t w\|_{L^{\infty}(I_n; H^s)}
\|u\|_{W^{m+1,\infty}(I_n;H^s)}
\notag\\
\le& C\tau^{m+1} \|u-R_\tau^n u\|_{L^{\infty}(I_n; H^s)}
\|u\|_{W^{m+1,\infty}(I_n;H^s)} , \nonumber
\end{align}
which leads to
\begin{align}\label{time-Ritz-L2}
\|u - R_\tau^n u\|_{L^{\infty}(I_n; H^s)}
\le C\tau^{m+1}\|u\|_{W^{m+1,\infty}(I_n;H^s)}.
\end{align}
This completes the proof of Lemma \ref{Lemma:time-Ritz}. 
\end{proof}

In addition to the above optimal-order approximation result, 
we also have the following super-convergence result. 

\begin{lemma}[A super-approximation property]\label{Lemma:time-Ritz-superappx} 
	Let $X=\R$ or $H^s(\Omega)$ for some $s\ge 0$. 
	If $w\in W^{k,\infty}(I_n;W^{s,\infty}(\Omega))$ and $v\in \mathbb{P}^{k-1}\otimes X$, then 
	\begin{align*}
	\|wv-P_\tau^n(wv)\|_{L^2(I_n;X)}
	\le
	C\tau \|v\|_{L^2(I_n;X)} . 
	\end{align*}
\end{lemma}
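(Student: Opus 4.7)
The plan is to combine the best-approximation characterization of $P_\tau^n$ with a one-term ``freeze'' of the smooth factor $w$ at $t=t_{n-1}$. The point is that multiplication by a polynomial $v \in \mathbb{P}^{k-1}\otimes X$ reduces the entire approximation task to approximating $w$ alone, and even a zeroth-order-in-$t$ approximation of $w$ is enough to produce the desired factor of $\tau$.

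First, I would record that although $P_\tau^n$ is defined as the $L^2(I_n;L^2(\Omega))$-orthogonal projection onto $\mathbb{P}^{k-1}\otimes L^2(\Omega)$, its Legendre representation
\begin{align*}
P_\tau^n u(t,x) = \sum_{j=0}^{k-1} \frac{L_j(t)}{\int_{I_n}|L_j(s)|^2\,\d s}\,\int_{I_n} L_j(s)\,u(s,x)\,\d s
\end{align*}
shows that it acts pointwise in $x$ through the orthogonal Legendre basis in time. By a tensor-product argument, $P_\tau^n$ therefore coincides with the $L^2(I_n;X)$-orthogonal projection onto $\mathbb{P}^{k-1}\otimes X$ whenever $X$ is a Hilbert space, and so
\begin{align*}
\|wv - P_\tau^n(wv)\|_{L^2(I_n;X)} \;\le\; \|wv - q\|_{L^2(I_n;X)}\qquad \forall\, q\in \mathbb{P}^{k-1}\otimes X.
\end{align*}

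The main step is the choice $q(t,x):= w(t_{n-1},x)\,v(t,x)$. Since $w(t_{n-1},\cdot)\in W^{s,\infty}(\Omega)$ is independent of $t$ and acts as a multiplier on $H^s(\Omega)$ by the standard Moser-type product estimate, and since $v\in \mathbb{P}^{k-1}\otimes X$, one has $q\in \mathbb{P}^{k-1}\otimes X$. The estimate is then completed pointwise in $t$: the $W^{s,\infty}$-multiplier estimate gives $\|(w(t)-w(t_{n-1}))v(t)\|_X \le C\|w(t) - w(t_{n-1})\|_{W^{s,\infty}}\|v(t)\|_X$, and the fundamental theorem of calculus gives $\|w(t)-w(t_{n-1})\|_{W^{s,\infty}}\le \tau\|\partial_t w\|_{L^\infty(I_n;W^{s,\infty})}$. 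Squaring, integrating over $I_n$, and absorbing the fixed factor $\|w\|_{W^{1,\infty}(I_n;W^{s,\infty})}$ into $C$ yields $C\tau\|v\|_{L^2(I_n;X)}$, as required.

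The main obstacle is purely bookkeeping: one needs to justify the best-approximation step for $L^2(I_n;X)$ rather than the originally-defined $L^2(I_n;L^2)$, which follows from the separable structure of $P_\tau^n$ noted above. The case $X=\R$ reduces to the same argument with $s=0$, for which the multiplier estimate degenerates to $\|fg\|_{L^2}\le \|f\|_{L^\infty}\|g\|_{L^2}$. I note that although the lemma hypothesizes $w\in W^{k,\infty}(I_n;W^{s,\infty})$, this freezing argument uses only $W^{1,\infty}$-regularity in time; the stronger hypothesis would be needed only if one instead combined the sharp approximation estimate $\|u - P_\tau^n u\|_{L^2(I_n;X)}\le C\tau^k\|\partial_t^k u\|_{L^2(I_n;X)}$ with the inverse inequalities $\|\partial_t^{j}v\|_{L^2(I_n;X)}\le C\tau^{-j}\|v\|_{L^2(I_n;X)}$ for $v\in \mathbb{P}^{k-1}\otimes X$, which through a direct expansion of $\partial_t^k(wv)$ yields the same $O(\tau)$ bound.
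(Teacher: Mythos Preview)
Your argument is correct and is genuinely different from the paper's. The paper proceeds exactly along the alternative you sketch in your final paragraph: it applies the $k$th-order approximation bound $\|wv-P_\tau^n(wv)\|_{L^\infty(I_n;H^s)}\le C\tau^k\|\partial_t^k(wv)\|_{L^\infty(I_n;H^s)}$, expands $\partial_t^k(wv)$ by Leibniz (using $\partial_t^k v=0$), absorbs each $\|\partial_t^m v\|_{L^\infty}$ via temporal inverse inequalities $\tau^{-m}\|v\|_{L^\infty}$, and finishes with $\|v\|_{L^\infty(I_n;H^s)}\le C\tau^{-1/2}\|v\|_{L^2(I_n;H^s)}$. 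Your freezing argument, by contrast, bypasses both the high-order approximation estimate and the inverse inequalities entirely: best approximation with the explicit competitor $q=w(t_{n-1})v$ reduces everything to a first-order Taylor bound on $w$. This is shorter and, as you note, uses only $w\in W^{1,\infty}(I_n;W^{s,\infty})$ rather than the full $W^{k,\infty}$ hypothesis; the paper's route has the advantage of being a mechanical application of standard polynomial-approximation machinery, but yours is the more economical proof of the stated lemma. Your justification that $P_\tau^n$ coincides with the $L^2(I_n;X)$-orthogonal projection (via its Legendre representation acting pointwise in $x$) is the right way to handle the $X=H^s$ case, and the multiplier estimate $\|fg\|_{H^s}\le C\|f\|_{W^{s,\infty}}\|g\|_{H^s}$ you invoke is indeed the same one the paper uses implicitly.
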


\begin{proof}
	We only give a proof for the case $X=H^s(\Omega)$ because the other cases are similar. 
	By applying \eqref{approx-l2-pro} with $m=k$, we have 
	\begin{align*}
	\|wv-P_\tau^n(wv)\|_{L^2(I_n;H^s)}
	&\le
	C\tau^{\frac12} \|wv-P_\tau^n(wv)\|_{L^\infty(I_n;H^s)} \\ 
	&\le
	C\tau^{k+\frac12} \|\partial_t^k(wv)\|_{L^\infty(I_n;H^s)} \\
	&\le
	C\sum_{m=0}^{k-1} \tau^{k+\frac12} \| \partial_t^{k-m} w \partial_t^{m}v\|_{L^\infty(I_n;H^s)}  
	\quad\mbox{(since $\partial_t^kv=0$)}\\
	&\le
	C\sum_{m=0}^{k-1} \tau^{k+\frac12} \| \partial_t^{k-m} w \|_{L^\infty(I_n;W^{s,\infty})}  
	\|\partial_t^{m}v\|_{L^\infty(I_n;H^{s})}  \\
	&\le
	C\sum_{m=0}^{k-1} \tau^{k+\frac12-m} \| v \|_{L^\infty(I_n;H^s)}  \\
	&\le
	C\tau^{\frac32} \| v \|_{L^\infty(I_n;H^s)}  \\
	&\le
	C\tau  \| v \|_{L^2(I_n;H^s)}. 
	\end{align*}
	here we have used the inverse inequality in time twice above. The proof is complete. 
\end{proof}

Finally, we also recall the (spatial) Ritz projection operator $R_h:H^1_0(\Omega)\rightarrow S_h$ defined by 
$$
\bigl(\nabla (w-R_hw),\nabla v_h \bigr)=0 \qquad\forall v_h\in S_h,\,\,\,\forall\, w\in H^1_0(\Omega),
$$ 
and the discrete Laplacian operator $\Delta_h : S_h \to S_h$ defined by
\begin{align} 
(\Delta_h \phi_h, \chi_h) := -(\nabla \phi_h, \nabla \chi_h) \quad \forall\, \phi_h, \chi_h \in S_h .
\end{align}
It is known \cite{Brenner-Scott} that there hold the following identities: 
\begin{subequations}\label{Ph-Delta-Rh}
	\begin{alignat}{2}
	&P_h\Delta v =\Delta_hR_hv  &&\qquad \forall\,v\in H^1_0(\Omega) , \\
	&R_\tau^nR_hv=R_hR_\tau^nv &&\qquad \forall\,v\in W^{1,\infty}(I_n;H^1_0(\Omega)), \\
	&R_\tau^n\Delta_hv_h=\Delta_hR_\tau^nv_h &&\qquad \forall\,v\in W^{1,\infty}(I_n;S_h).
	\end{alignat}
\end{subequations}  
Moreover, there holds the following approximation property (cf. \cite{Brenner-Scott}): 
\begin{align}
&\|v - R_h v\|_{H^1} \le Ch^{p} \| v \|_{H^{p+1} } \quad\forall\, v\in H^1_0(\Omega)\cap H^{p+1}(\Omega) . 
\label{L2-H1-Ritz} 
\end{align}

\subsection{Consistency of scheme \eqref{GL-uh}--\eqref{GL-rh}}

We define a pair of intermediate solutions (for comparison with the numerical solutions)
$$
u_h^*=R_\tau^n R_h u
\quad\mbox{and}\quad
r_h^*=R_\tau^n r .
$$ 
Then, testing \eqref{IEQ-u} and \eqref{IEQ-r} by $P_\tau^nv_h$ and $P_\tau^nq_h$, respectively, we obtain the following equations for $u_h^*$ and $r_h^*$:
\begin{align}\label{GL-u-rf1}
&\int_{I_n}
\i\bigl(\partial_t u_h^*, P_\tau^nv_h \bigr) \d t
+ 
\int_{I_n}
\bigl(\nabla u_h^* , \nabla P_\tau^nv_h \bigr) \d t
 \\
&\qquad - 
\frac{\tau}{2}\sum_{j=1}^k w_j \bigl(r_h^*(t_{nj})g(u_h^*(t_{nj})) u_h^*(t_{nj}),P_\tau^n v_h(t_{nj}) \bigr) 
= \int_{I_n} (d_u^n,P_\tau^nv_h) \d t , \nonumber \\ 
& \int_{I_n} \partial_tr_h^* P_\tau^nq_h \d t
=\frac{\tau}{4}\sum_{j=1}^k w_j{\rm Re} \bigl(P_\tau^nq_h(t_{nj}) g(u_h^*(t_{nj})) u_h^*(t_{nj}) , \partial_t u_h^*(t_{nj}) \bigr)  \label{GL-r-rf1}  \\
&\hspace{79pt}
+  \int_{I_n} d_r^n P_\tau^nq_h \d t , 
 \nonumber 
\end{align}
where $d_u^n$ and $d_r^n$ are consistency errors of the numerical method. By using the identities \eqref{Gauss-integral} and \eqref{Ritz-time}, we can express these consistency errors by 
\begin{align}\label{def:dnj-u} 
d_u^n =& \i \partial_tR_\tau^n(R_hu-u) + \Delta_h R_h ( u - R_\tau^n u) 
+ r g(u)u-I_\tau^n[r_h^*g(u_h^*)u_h^*]  ,  \\[3pt]
\label{def:dnj-r} 
d_r^n =
& \frac{1}{2} {\rm Re} \big[
\big(g(u) u, \partial_t  u \big) 
- I_\tau^n \big(g(u_h^*) u_h^*, \partial_t u_h^*\big) \big] . 
\end{align} 
After using \eqref{l2-pro} and \eqref{Ptau-tnj}, equations \eqref{GL-u-rf1}--\eqref{GL-r-rf1} can be rewritten as
\begin{align}\label{GL-u-rf}
&\int_{I_n}
\i\bigl(\partial_t u_h^*, v_h \bigr) \d t
+ 
\int_{I_n}
\bigl(\nabla P_\tau^n u_h^* , \nabla v_h \bigr) \d t
 \\
&\qquad - 
\frac{\tau}{2}\sum_{j=1}^k w_j \bigl(r_h^*(t_{nj})g(u_h^*(t_{nj})) u_h^*(t_{nj}), v_h(t_{nj}) \bigr) 
= \int_{I_n} (P_\tau^n d_u^n,v_h) \d t , \nonumber \\ 
& \label{GL-r-rf} 
\int_{I_n} \partial_tr_h^* q_h \d t
=\frac{\tau}{4}\sum_{j=1}^k w_j{\rm Re} \bigl(q_h(t_{nj}) g(u_h^*(t_{nj})) u_h^*(t_{nj}) , \partial_t u_h^*(t_{nj}) \bigr)  \\
&\hskip 1in +  \int_{I_n} P_\tau^n d_r^n q_h \d t.
 \nonumber
\end{align}
 
\begin{theorem}\label{THM:consistency}
Suppose that the solution of \eqref{pde} is sufficiently smooth, then $d_u^n \in C(I_n;H^1_0(\Omega))$ and there hold 
\begin{align}\label{consistency-dnj} 
&\sup_{t\in I_n} \|d_u^n\|_{H^1} \le C(h^{p}+\tau^{k+1}) 
\quad\mbox{and}\quad 
\sup_{t\in I_n}|P_\tau^n d_r^n|
\le
C(h^{p} +\tau^{k+1})  .
\end{align}
\end{theorem}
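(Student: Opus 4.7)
The plan is to bound $d_u^n$ and $P_\tau^n d_r^n$ separately. The estimate of $d_u^n$ reduces to a three-term triangle inequality in which each piece is handled by the approximation and stability properties already established. The estimate of $P_\tau^n d_r^n$ is more delicate: one piece contains the factor $\partial_t(u - u_h^*)$, which is only $O(\tau^k)$ pointwise, and the missing order of $\tau$ must be recovered via the outer $L^2$-projection $P_\tau^n$ together with the super-approximation Lemma \ref{Lemma:time-Ritz-superappx}.

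To bound $d_u^n$ in $L^\infty(I_n;H^1)$, I handle each of the three summands in \eqref{def:dnj-u}. For $\i\partial_t R_\tau^n(R_h u - u)$, since $\partial_t R_\tau^n v$ is the temporal $L^2$-projection of $\partial_t v$ onto $\mathbb{P}^{k-1}$, this term coincides with $\i P_\tau^n \partial_t(R_h u - u)$, and the uniform $L^\infty$-boundedness of $P_\tau^n$ on polynomial spaces (which follows from its $L^2$-stability together with a temporal inverse inequality) reduces matters to $\|\partial_t(R_h u - u)\|_{L^\infty(I_n;H^1)} = O(h^p)$ via \eqref{L2-H1-Ritz}. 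For $\Delta_h R_h(u - R_\tau^n u)$, the commutativity $\Delta_h R_h = P_h \Delta$ from \eqref{Ph-Delta-Rh} and the $H^1$-stability \eqref{Ph-H1-stability} of $P_h$ reduce matters to $\|u - R_\tau^n u\|_{H^3}$, which is $O(\tau^{k+1})$ by Lemma \ref{Lemma:time-Ritz} with $X = H^3(\Omega)$. For $rg(u)u - I_\tau^n[r_h^* g(u_h^*) u_h^*]$, I insert $\pm I_\tau^n[rg(u)u]$: the Lagrange interpolation error of the smooth function $rg(u)u$ yields $O(\tau^{k+1})$ via \eqref{approx-In}, while the smoothed Lipschitz difference is controlled by $|r - r_h^*| = O(\tau^{k+1})$ and $\|u - u_h^*\|_{H^1} = O(h^p + \tau^{k+1})$; the latter follows from the decomposition $u - u_h^* = (u - R_h u) + R_h(u - R_\tau^n u)$, using $R_\tau^n R_h = R_h R_\tau^n$ together with \eqref{L2-H1-Ritz}, the $H^1$-stability of $R_h$, and Lemma \ref{Lemma:time-Ritz}. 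Continuity $d_u^n \in C(I_n; H^1_0(\Omega))$ is immediate from these representations.

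To bound $P_\tau^n d_r^n$ in $L^\infty(I_n)$, I decompose the argument of \eqref{def:dnj-r} first as $[(g(u)u,\partial_t u) - (g(u_h^*)u_h^*,\partial_t u_h^*)] + [(g(u_h^*)u_h^*,\partial_t u_h^*) - I_\tau^n(g(u_h^*)u_h^*,\partial_t u_h^*)]$. The second bracket is a scalar Lagrange interpolation residual of a smooth function and contributes $O(\tau^{k+1})$ directly. The first bracket further splits as $(g(u)u - g(u_h^*)u_h^*, \partial_t u) + (g(u_h^*)u_h^*, \partial_t(u - u_h^*))$; the initial piece is bounded pointwise by $O(h^p + \tau^{k+1})$ via Lipschitz continuity of the nonlinearity and the bound on $u - u_h^*$ obtained above. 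The final piece is the difficult one: since $\partial_t u_h^* = P_\tau^n R_h \partial_t u \in \mathbb{P}^{k-1}\otimes S_h$, one has $\partial_t(u - u_h^*) = \partial_t u - P_\tau^n R_h \partial_t u$, which is only $O(\tau^k)$ pointwise, so a direct estimate of $d_r^n$ would be suboptimal.

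To recover the optimal rate using the outer $P_\tau^n$, I invoke duality: $\|P_\tau^n d_r^n\|_{L^\infty(I_n)} \le C\tau^{-1/2}\sup\{\bigl|\int_{I_n} d_r^n\,q\,dt\bigr|:\, q\in\mathbb{P}^{k-1},\,\|q\|_{L^2(I_n)}=1\}$. Splitting $\partial_t u - P_\tau^n R_h \partial_t u = (\partial_t u - R_h \partial_t u) + (R_h \partial_t u - P_\tau^n R_h \partial_t u)$, the spatial component contributes $O(h^{p+1})$ in the $L^2$-sense. The temporal component is $L^2(I_n;L^2)$-orthogonal to $\mathbb{P}^{k-1}\otimes L^2(\Omega)$, so the test integral $\int_{I_n}(g(u_h^*)u_h^*\,q,\,R_h\partial_t u - P_\tau^n R_h\partial_t u)\,dt$ can be rewritten with $g(u_h^*)u_h^*\,q$ replaced by $g(u_h^*)u_h^*\,q - P_\tau^n[g(u_h^*)u_h^*\,q]$; Lemma \ref{Lemma:time-Ritz-superappx} applied with $w = g(u_h^*)u_h^*$ and $v = q$ then supplies the missing factor $\tau$, which combined with $\|R_h\partial_t u - P_\tau^n R_h\partial_t u\|_{L^2(I_n;L^2)} = O(\tau^{k+1/2})$ and the prefactor $\tau^{-1/2}$ yields the desired $O(\tau^{k+1})$. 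This last step—restoring the order of $\tau$ lost to the factor $\partial_t(u - u_h^*)$ through the combination of temporal orthogonality of $P_\tau^n$ with the super-approximation property—is the main obstacle and, as the authors emphasize, the technical heart of the consistency analysis in the SAV setting.
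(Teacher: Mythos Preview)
Your proposal is correct and follows essentially the same approach as the paper: the same term-by-term treatment of $d_u^n$, and the same key mechanism for $P_\tau^n d_r^n$ (duality against $\mathbb{P}^{k-1}$, orthogonality of the temporal projection residual, and Lemma~\ref{Lemma:time-Ritz-superappx} to recover the missing factor of $\tau$). The only cosmetic differences are that the paper splits the bilinear difference as $(g(u)u,\partial_t(u-u_h^*))+(g(u)u-g(u_h^*)u_h^*,\partial_t u_h^*)$ and applies the super-approximation lemma with $w=g(u)u$ rather than $w=g(u_h^*)u_h^*$, which avoids having to argue that $g(u_h^*)u_h^*$ inherits uniform $W^{k,\infty}(I_n;L^\infty)$ bounds from the exact solution.
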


\begin{remark}\label{Remark:consistency}
{\upshape 
The key observation for the consistency errors is that, although \eqref{def:dnj-r} contains an $O(h^p+\tau^k)$ error from $\partial_tu-\partial_tu_h^*$, the temporal $L^2$ projection operator $P_\tau^n$ acting on $d_r^n$ furthermore reduces this error to $O(h^p+\tau^{k+1})$. This is proved by using the super-approximation result in Lemma \ref{Lemma:time-Ritz-superappx}.  
}
\end{remark}

\begin{proof}
Since the spatial Ritz projection $R_h$ maps $H^1_0(\Omega)$ into $S_h\subset H^1_0(\Omega)$, and the temporal Ritz projection $R_\tau^n$ maps $W^{1,\infty}(I_n;H^1_0(\Omega))$ into $\mathbb{P}^k\otimes H^1_0(\Omega)$, it follows that every term in \eqref{def:dnj-u} is in $C(I_n;H^1_0(\Omega))$. This implies $d_u^n \in C(I_n;H^1_0(\Omega))$. 

By using the triangle inequality, from \eqref{def:dnj-u} we get   
\begin{align} \label{dnj^u-est}
\max_{t\in I_n} \|d_u^n\|_{H^1} 
\le
&\max_{t\in I_n} 
\big(
\|\partial_tR_\tau^n(R_hu-u)\|_{H^1}  + \| \Delta_h R_h (u - R_\tau^n u) \|_{H^1} \big)  \\
&+ \max_{t\in I_n} 
\big( \|r g(u)u-I_\tau^n[rg(u)u]\|_{H^1}
+ \|rg(u)u-r_h^*g(u_h^*)u_h^*\|_{H^1} \big) 
\notag\\
=&:D^u_1 + D^u_2 + D^u_3 + D^u_4. \notag
\end{align}
Choosing $m=0$ in Lemma \ref{Lemma:time-Ritz}, we obtain the following stability result: 
\begin{align}\label{time-stability-W1infty}
\|R_\tau^n u\|_{W^{1,\infty}(I_n;H^s)} 
\le
C \|u\|_{W^{1,\infty}(I_n;H^s)}  . 
\end{align}
Using \eqref{time-stability-W1infty} and \eqref{L2-H1-Ritz}, we can estimate $D^u_1$ as follows: 
\begin{align*} 
D^u_1
= \max_{t\in I_n}  \|\partial_tR_\tau^n(R_hu-u)\|_{H^1}
&\le \|R_hu-u\|_{W^{1,\infty}(I_n;H^1)} \\
&\le Ch^p \|R_hu-u\|_{W^{1,\infty}(I_n;H^{p+1})} . \notag
\end{align*}
Similarly, using identity \eqref{Ph-Delta-Rh} and Lemma \ref{Lemma:time-Ritz}, we have
\begin{align*} 
D^u_2 =
\max_{t \in I_n} \| \Delta_h R_h (u - R_\tau^n u)  \|_{H^1}
&=
\max_{t \in I_n} \| P_h \Delta (u - R_\tau^n u)  \|_{H^1}  \\
&\le
\max_{t \in I_n} \| u - R_\tau^n u  \|_{H^3} \notag \\
&\le C\tau^{k+1} \| u \|_{W^{k+1,\infty}(I_n;H^3)}  , \notag
\quad\,\,\,
\end{align*}
and
\begin{align*} 
D^u_3 =
\max_{t \in I_n} \|r g(u)u-I_\tau^n[rg(u)u]\|_{H^1}
&\le 
C\tau^{k+1} . 
\end{align*}
By using the triangle inequality, we decompose $D^u_4$ into two parts, 
\begin{align*} 
D^u_4 
\le
&\max_{t \in I_n} 
\big( \|r g(u)u- r g(R_h u)R_hu\|_{H^1} 
 + \|r g(R_h u)R_hu-R_\tau^n r g(R_\tau^n R_hu )R_\tau^n R_hu\|_{H^1} \big) \notag \\
\le &
Ch^{p} + C\tau^{k+1} . \notag  
\end{align*}
Then, substituting the estimates of $D^u_j$, $j=1,2,3,4$, into \eqref{dnj^u-est}, we obtain the desired estimate for $\|d_u^n\|_{H^1}$. 

To estimate $|P_\tau^n d_r^n|$, we rewrite \eqref{def:dnj-r} as
\begin{align*} 
d_r^n
=&
\frac{1}{2} {\rm Re} \Big[ 
\bigl(g(u) u, \partial_t (u -  u_h^*)\bigr)  
+  \bigl(g(u) u-g(u_h^*) u_h^*, \partial_t u_h^*\bigr) \Big] \\
&
+ \frac{1}{2} {\rm Re} \Big[\big(g(u_h^*) u_h^* ,\partial_t u_h^*\big) 
- I_\tau^n \big( g(u_h^*) u_h^*, \partial_t u_h^*\big)  \Big]\notag
\end{align*}
and test this expression by $P_\tau^n v$ in the time interval $I_n$, with $v\in\mathbb{P}^{k}$. This yields 
\begin{align}\label{dnj^r-est1} \
&\int_{I_n} P_\tau^n d_r^n v \,\d t
=\int_{I_n}  d_r^n P_\tau^n  v \,\d t \\
&\quad \le \frac12{\rm Re} \int_{I_n} \bigl(g(u) u ,\partial_t (u-u_h^*)\bigr) P_\tau^n v \,\d t \notag\\
&\qquad+C \tau^{\frac12} \| (g(u) u - g(u_h^*) u_h^*,\partial_t u_h^*) \|_{L^\infty(I_n)} \|v\|_{L^2(I_n)} \notag \\
&\qquad +C\tau^{k+\frac32}
\| \partial_t^{k+1}\big(g(u_h^*) u_h^* ,\partial_t u_h^*\big)  \|_{L^\infty(I_n)}
\|v\|_{L^2(I_n)}\notag  \\
&\quad \le \frac12{\rm Re} \int_{I_n} \bigl(g(u) u ,\partial_t (u-u_h^*)\bigr) P_\tau^n v \,\d t 
+ C\tau^{\frac12} (h^{p} +\tau^{k+1}) \|v\|_{L^2(I_n)}. \notag 
\end{align} 
The first term on the right-hand side of \eqref{dnj^r-est1}  can be estimated as follows. 
\begin{align}\label{dnj^r-est2}
\frac12{\rm Re} \int_{I_n} \bigl(g(u) u ,\partial_t (u-u_h^*)\bigr) P_\tau^n v \,\d t 
=& \int_{I_n} (g(u) u ,\partial_t (u-R_\tau^nu))  P_\tau^n v \,\d t  \\
& +\int_{I_n} (g(u) u ,\partial_t R_\tau^n (u-R_hu))  P_\tau^n v \,\d t \notag \\
=&:D^r_1+D^r_2, \nonumber
\end{align}
where
\begin{align*}
D^r_1=
&
\int_{I_n} \bigl( g(u) u P_\tau^n v  ,\partial_t (u-R_\tau^nu)\bigr)  \,\d t \\
=
&
\int_{I_n} \bigl(g(u) uP_\tau^n v- P_\tau^n( g(u) uP_\tau^n v ) ,\partial_t (u-R_\tau^nu)\bigr)   \,\d t \\
\le 
&C\tau^{\frac12} \|g(u) uP_\tau^n v- P_\tau^n( g(u) uP_\tau^n v) \|_{L^2(I_n;L^2)} 
\|\partial_t (u-R_\tau^nu)\|_{L^\infty(I_n;L^2)} \\
\le
& C\tau^{\frac32} \|P_\tau^n v\|_{L^2(I_n;L^2)} 
\|\partial_t (u-R_\tau^nu)\|_{L^\infty(I_n;L^2)} \quad\mbox{(we have used Lemma \ref{Lemma:time-Ritz-superappx})}\\
\le
& C\tau^{k+\frac32} \| v\|_{L^2(I_n)} \|\partial_t^{k+1}u\|_{L^\infty(I_n;L^2)} 
\hspace{51.5pt}\mbox{(we have used Lemma \ref{Lemma:time-Ritz})}  ,\\[5pt] 
D^r_2\le
&
C\tau^{\frac12}
\| g(u) u \|_{L^\infty(I_n;L^2)}
\| u-R_hu \|_{W^{1,\infty}(I_n;L^2)} 
\|v\|_{L^2(I_n)} \\
\le 
&
C\tau^{\frac12} h^{p} \|v\|_{L^2(I_n)} \|u\|_{W^{1,\infty}(I_n;H^{p+1})} . 
\end{align*}
Substituting these estimates into \eqref{dnj^r-est1}, we obtain
\begin{align*}
&\bigg| \int_{I_n} P_\tau^n d_r^n  v \,\d t \bigg|
\le
C\tau^{\frac12} (h^{p} +\tau^{k+1}) \|v\|_{L^2(I_n)} .
\end{align*}
Since this inequality holds for arbitrary $v\in L^2(I_n)$, it follows that 
\begin{align*}
\|P_\tau^n d_r^n\|_{L^2(I_n)} 
\le C\tau^{\frac12} (h^{p} +\tau^{k+1})  . 
\end{align*}
Then, using the inverse inequality in time, we obtain  the desired estimate for $|P_\tau^n d_r^n|$. 
\end{proof}

\section{Well-posedness and convergence analysis}\label{sec-4} 

We define the error functions $e^u_h=u_h-u_h^*$ and $e^r_h=r_h-r_h^*$, with the following abbreviations: 
\begin{align*}
\begin{aligned}
&e^u_{nj}=e^u_h(t_{nj})
&&\mbox{and}&&
e^r_{nj}=e^r_h(t_{nj}) ,\\
&u_{nj}=u_h(t_{nj})
&&\mbox{and}&&
r_{nj}=r_h(t_{nj}) ,\\
&u_{nj}^*=u_h^*(t_{nj})
&&\mbox{and}&&
r_{nj}^*=r_h^*(t_{nj}) 
,\\
&v_{nj}=v_h(t_{nj}) 
&&\mbox{and}&&q_{nj}=q_h(t_{nj}).
\end{aligned}
\end{align*}
Subtracting \eqref{GL-u-rf}--\eqref{GL-r-rf} from \eqref{uh-test-vh}--\eqref{rh-test-qh}, we obtain the following error equations: 
\begin{subequations}\label{GL-uh-rh-err-0}
\begin{align}
\label{GL-uh-err-0}
\i\int_{I_n}
\big(\partial_t e^u_h, v_h\big)\, \d t 
=& -\int_{I_n} \big(\nabla P^n_\tau e^u_h, \nabla v_h\big)\, \d t 
+\frac{\tau}{2}\sum_{j=1}^k w_j 
\Big(e^r_{nj} g(u_{nj})u_{nj}, v_{nj}\Big)
\notag\\
&+\frac{\tau}{2}\sum_{j=1}^k w_j 
\Big(r_{nj}^* \big[g(u_{nj})u_{nj} - g(u_{nj}^*) u_{nj}^*\big], v_{nj}\Big) 
- \int_{I_n} (P_\tau^n d_u^n,v_h) \d t,\\
\label{GL-rh-err-0}
\int_{I_n} \partial_t e^r_h q_h  \d t 
=&\frac{\tau}{4}\sum_{j=1}^k w_j 
{\rm Re}\big( q_{nj} \big( g(u_{nj})u_{nj} -g(u_{nj}^*) u_{nj}^* \big), \partial_t  u_h^*(t_{nj})\big)
\notag\\
&+ \frac{\tau}{4}\sum_{j=1}^k w_j 
{\rm Re}\big( q_{nj} g(u_{nj})u_{nj} , \partial_t e^u_h(t_{nj})\big)
- \int_{I_n}  P_\tau^n d_r^n q_h  \d t ,   
\end{align}
\end{subequations}
which hold for all test functions $v_h\in \mathbb{P}^k\otimes S_h$ and $q_h\in \mathbb{P}^k$.

\begin{remark}\label{Remark:existence}
{\upshape 
If \eqref{GL-uh-rh-err-0} has a solution $(e^u_h, e_h^r)\in X_{\tau,h}\times Y_{\tau,h}$ with $u_h=u_h^*+e^u_h$ and $r_h=r_h^*+e^r_h$, then $(u_h,r_h)$ is a solution of the numerical scheme \eqref{GL}. In the following, we prove existence of a solution $(e^u_h,e_h^r)$ to \eqref{GL-uh-rh-err-0} with $u_h=u_h^*+e^u_h$ and $r_h=r_h^*+e^r_h$. 
}
\end{remark}

In this section, we prove existence and uniqueness of solutions to \eqref{GL-uh-err-0}--\eqref{GL-rh-err-0} 
by using Schaefer's  Fixed Point Theorem, which is quoted below. 

\begin{theorem}[Schaefer's  Fixed Point Theorem {\cite[Chapter 9.2, Theorem 4]{Evans-PDE}}] \label{THMSchaefer}
Let $B$ be a Banach space and let $M:B\rightarrow B$ be a continuous and compact mapping $($possibly nonlinear$)$. If the set 
	\begin{equation}
	\bigl\{\phi\in B:\; \exists\, \theta\in [0,1] \,\,\,\mbox{such that}\,\,\, \phi=\theta M(\phi) \bigr\}
	\end{equation}
	is bounded in $B$, then the mapping $M$ has at least one fixed point.
\end{theorem}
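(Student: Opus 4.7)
The plan is to reduce Schaefer's theorem to Schauder's fixed point theorem by composing $M$ with a continuous radial retraction onto a closed ball whose radius is dictated by the \emph{a priori} bound in the hypothesis. The hypothesis is used exactly once, and in a conceptually clean way: it rules out the ``escape'' alternative of the retraction, so any fixed point of the retracted map is automatically a fixed point of $M$ itself.

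First I would use the hypothesis to extract a radius. Set $S := \{\phi\in B : \phi = \theta M(\phi)\text{ for some }\theta\in[0,1]\}$ (nonempty since $0\in S$) and let $R := \sup_{\phi\in S}\|\phi\|_B < \infty$. Then I would define the radial retraction $\rho : B \to \overline{B_{R+1}}$ by
\begin{equation*}
\rho(x) = \begin{cases} x & \text{if } \|x\|_B \le R+1,\\[2pt] \dfrac{(R+1)\,x}{\|x\|_B} & \text{if } \|x\|_B > R+1,\end{cases}
\end{equation*}
which is continuous (in fact $1$-Lipschitz) on all of $B$, and form the composition $\tilde M := \rho\circ M : \overline{B_{R+1}} \to \overline{B_{R+1}}$. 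Continuity of $\tilde M$ is immediate, and compactness follows because $M$ sends the bounded set $\overline{B_{R+1}}$ into a relatively compact set and $\rho$ is continuous. The set $\overline{B_{R+1}}$ is closed, bounded and convex, so Schauder's fixed point theorem (the standard consequence of Brouwer's theorem for compact operators on Banach spaces) yields some $\phi^*\in\overline{B_{R+1}}$ with $\tilde M(\phi^*) = \phi^*$.

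The remaining step is to promote $\phi^*$ to a true fixed point of $M$ by eliminating the truncated branch of $\rho$. If $\|M(\phi^*)\|_B \le R+1$ then $\rho(M(\phi^*)) = M(\phi^*)$ and we are done. Otherwise $\|M(\phi^*)\|_B > R+1$, in which case
\begin{equation*}
\phi^* \;=\; \rho(M(\phi^*)) \;=\; \theta\, M(\phi^*),\qquad \theta := \frac{R+1}{\|M(\phi^*)\|_B} \in (0,1) \subset [0,1].
\end{equation*}
Hence $\phi^*\in S$ and therefore $\|\phi^*\|_B \le R$. On the other hand $\|\phi^*\|_B = \|\rho(M(\phi^*))\|_B = R+1$ by construction of $\rho$ on the truncated branch, which contradicts $R+1 \le R$. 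So this branch cannot occur, and $M(\phi^*) = \phi^*$ as desired.

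The main obstacle, if any, is the slightly delicate bookkeeping in the final step: one must verify that the retraction parameter $\theta$ genuinely lies in $[0,1]$ so that the ``escape alternative'' produces an element of $S$, and that the truncated image has norm \emph{exactly} $R+1$ so as to contradict the hypothesis. There is no hard analysis here; the entire content is the combination of Schauder's theorem with the retraction trick, and the uniform bound supplied by the hypothesis is precisely the ingredient that closes the argument.
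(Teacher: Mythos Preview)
Your proof is correct and is in fact the standard argument: reduce to Schauder's theorem via a radial retraction onto a ball of radius $R+1$, then use the a~priori bound to rule out the truncated branch. The paper does not supply its own proof of this statement; it simply quotes Schaefer's theorem from Evans' textbook \cite[Chapter~9.2, Theorem~4]{Evans-PDE} and applies it as a black box in the existence argument for the numerical scheme. The proof you have written is essentially the one given in Evans, so there is nothing to contrast.
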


We define 
\begin{align}\label{def-X-star}
X_{\tau,h}^*&=\Bigl\{v_h\in X_{\tau,h}:  
\max_{1\le n\le N}\max_{1\le j\le k} \|v_h(t_{nj})-u_h^*(t_{nj})\|_{L^\infty\cap H^1}\le \frac12 \Bigr\} ,\\
Y_{\tau,h}^*&=\Bigl\{q_h\in Y_{\tau,h}:  
\max_{1\le n\le N}\max_{1\le j\le k} |q_h(t_{nj})-r_h^*(t_{nj})|\le \frac12 \Bigr\} ,
\label{def-Y-star}
\end{align}
where the norm $\|\cdot\|_{L^{\infty}\cap H^1}$ is defined as 
$$
\|\phi_h\|_{L^{\infty}\cap H^1}
:=\max \big( \|\phi_h\|_{L^{\infty}},\|\phi_h\|_{H^1} \big) . 
$$
For any element $(\phi_h,\varphi_h)\in X_{\tau,h}\times Y_{\tau,h}$, 
we define two associated numbers 
\begin{subequations}\label{def-rho}
\begin{align}\label{def-rho_a}
&\rho[\phi_h]:= \min\bigg(\frac{1}{\displaystyle
\max_{1\le n\le N}\max_{1\le j\le k}\|\phi_h(t_{nj})\|_{L^{\infty}\cap H^1}},1\bigg) ,\\
&\rho[\varphi_h]:= \min\bigg(\frac{1}{\displaystyle
\max_{1\le n\le N}\max_{1\le j\le k}
|\varphi_h(t_{nj})|},1\bigg) ,  \label{def-rho_b}
\end{align}
\end{subequations}
which are continuous with respect to $(\phi_h,\varphi_h)$ (because all norms are equivalent in the finite-dimensional space $X_{\tau,h}\times Y_{\tau,h}$). Furthermore, the two numbers defined above satisfy the following estimates:
\begin{align}
\max_{1\le n\le N} \max_{1\le j\le k}
\|\rho[\phi_h]\phi_h(t_{nj})\|_{L^{\infty}\cap H^1} &\le 1 , \label{phi-varphi-Linfty-1}\\
\max_{1\le n\le N} \max_{1\le j\le k} |\rho[\varphi_h]\varphi_h(t_{nj})| &\le 1 .
\label{phi-varphi-Linfty-2}
\end{align}
Then we define 
\begin{align}\label{def-e-phi}
&u^\phi:= 
u_h^*+\rho[\phi_h]\phi_h
\quad\mbox{and}\quad
r^\varphi:= 
r_h^*+\rho[\varphi_h]\varphi_h,
\end{align}
with the following abbreviations: 
\begin{align*}
\begin{aligned}
&u_{nj}^{\phi}=u_h^{\phi}(t_{nj})
&&\mbox{and}&&
\varphi_{nj}=\varphi_h(t_{nj}),
\end{aligned}
\end{align*}
and define $(e^u_h,e^r_h)\in X_{\tau,h}\times Y_{\tau,h}$ to be the solution of the following linear equations: 
\begin{align} \label{GL-uh-map}
&\i\int_{I_n}\big(\partial_t e^u_h, v_h\big)\, \d t 
+\int_{I_n} \big(\nabla P^n_\tau e^u_h, \nabla v_h\big)\, \d t 
=\frac{\tau}{2}\sum_{j=1}^k w_j 
\Big(\varphi_{nj} g(u_{nj}^\phi)u_{nj}^\phi, v_{nj}\Big) \\
&\qquad  +\frac{\tau}{2}\sum_{j=1}^k w_j 
\Big(r_{nj}^* \big[g(u_{nj}^\phi)u_{nj}^\phi - g(u_{nj}^*) u_{nj}^*\big], v_{nj}\Big)   
- \int_{I_n} (P_\tau^n d^u,v_h) \d t  \nonumber
\end{align}
and
\begin{align} \label{GL-rh-map}
\int_{I_n} \partial_t e^r_h q_h \, \d t 
&=\frac{\tau}{4}\sum_{j=1}^k w_j 
{\rm Re}\big( q_{nj} \big( g(u_{nj}^\phi) u_{nj}^\phi -g(u_{nj}^*) u_{nj}^* \big) , \partial_t u_h^*(t_{nj}) \big)\\
&\qquad + \frac{\tau}{4}\sum_{j=1}^k w_j 
{\rm Re}\big( q_{nj}g(u_{nj}^\phi) u_{nj}^\phi , \partial_t\phi_h(t_{nj}) \big)
-\int_{I_n} P_\tau^n d^r q_h \d t \nonumber
\end{align}
for all $v_h\in \mathbb{P}^k\otimes S_h$ and $q_h\in \mathbb{P}^k$, $n=1,\dots,N$. We denote by $M: X_{\tau,h}\times Y_{\tau,h}\rightarrow X_{\tau,h}\times Y_{\tau,h}$ the mapping from $(\phi_h,\varphi_h)$ to $(e^u_h,e^r_h)$, and define the set 
\begin{equation}
\mathfrak{B}=\bigl\{(\phi_h,\varphi_h)\in X_{\tau,h}\times Y_{\tau,h}:\; \exists\, \theta\in [0,1] \,\,\,\mbox{such that}\,\,\, (\phi_h,\varphi_h)=\theta M(\phi_h,\varphi_h) \bigr\},
\end{equation}
and the following norm on $X_{\tau,h}\times Y_{\tau,h}$: for any $ (\phi_h,\varphi_h) \in {X_{\tau,h}\times Y_{\tau,h}}$
\begin{equation}
\|(\phi_h,\varphi_h)\|_{X_{\tau,h}\times Y_{\tau,h}} :=\|\phi_h\|_{L^\infty(0,T;H^1)}
+\|\varphi_h\|_{L^\infty(0,T)}.
\end{equation}

\begin{lemma}
The mapping $M: X_{\tau,h}\times Y_{\tau,h}\rightarrow X_{\tau,h}\times Y_{\tau,h}$ is well defined, continuous and compact. 
\end{lemma}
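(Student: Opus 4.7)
The plan is to establish the three properties in sequence, with well-definedness being the substantive step and the other two following by soft arguments. The key observation is that, once $(\phi_h,\varphi_h)$ is fixed and the initial pair at $t=0$ is prescribed (for instance $u_h^0-u_h^*(0)$ and $0$), equations \eqref{GL-uh-map}--\eqref{GL-rh-map} constitute a sequence of linear algebraic systems on the subintervals $I_n$, with data at $t_{n-1}$ inherited from the previous interval through the continuity built into $X_{\tau,h}\times Y_{\tau,h}$. Thus everything reduces to unique solvability of each such system plus a standard continuity argument.

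\textbf{Well-definedness.} On each $I_n$, \eqref{GL-uh-map}--\eqref{GL-rh-map} is a square linear system for $(e^u_h,e^r_h)\in(\mathbb{P}^k\otimes S_h)\times\mathbb{P}^k$ with fixed initial data at $t_{n-1}$, so existence reduces to uniqueness of the homogeneous problem. For $e^r_h$, I would take $q_h=\partial_t e^r_h\in\mathbb{P}^{k-1}\subset\mathbb{P}^k$ in the homogeneous version of \eqref{GL-rh-map} to obtain $\int_{I_n}|\partial_t e^r_h|^2\,\d t=0$, whence $e^r_h\equiv 0$ on $I_n$. For $e^u_h$, reversing the derivation of \eqref{uh-test-vh} from \eqref{GL-uh}, the homogeneous version of \eqref{GL-uh-map} is equivalent to the Gauss--Legendre collocation scheme applied to the linear discrete Schr\"odinger ODE $\i\,\partial_t e^u_h=\Delta_h e^u_h$ on $S_h$ with zero initial data. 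Diagonalising the self-adjoint operator $-\Delta_h$ decouples this into scalar collocation problems: find $c\in\mathbb{P}^k$ with $c(t_{n-1})=0$ and $\dot c(t_{nj})=\i\mu c(t_{nj})$ for $j=1,\dots,k$, where $\mu>0$ is an eigenvalue of $-\Delta_h$. The corresponding $k\times k$ stage system has coefficient matrix $I-\tau\i\mu\,\mathcal{A}_{\rm RK}$, whose determinant coincides (up to a constant) with the denominator of the $[k/k]$-Pad\'e approximation to $e^z$ evaluated at $z=\tau\i\mu$. Since this denominator is A-acceptable and therefore has no zeros on the imaginary axis, the matrix is invertible, $c\equiv 0$ for every eigenmode, and hence $e^u_h\equiv 0$.

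\textbf{Continuity and compactness.} The right-hand sides of \eqref{GL-uh-map}--\eqref{GL-rh-map} depend on $(\phi_h,\varphi_h)$ only through $u^\phi=u_h^*+\rho[\phi_h]\phi_h$ and $r^\varphi=r_h^*+\rho[\varphi_h]\varphi_h$, evaluated at the Gauss points. The cutoffs $\rho[\cdot]$ defined by \eqref{def-rho} are continuous on $X_{\tau,h}\times Y_{\tau,h}$ because the scalar map $s\mapsto\min(s^{-1},1)$ is continuous on $[0,\infty)$ (with the convention $1/0=+\infty$, the minimum is simply equal to $1$ in a neighbourhood of $s=0$). By \eqref{phi-varphi-Linfty-1}--\eqref{phi-varphi-Linfty-2}, $u^\phi_{nj}$ stays in a fixed bounded subset of $L^\infty\cap H^1$, so the smoothness of the nonlinearity in \eqref{nonlinearity-f} on bounded subsets of $L^\infty$ yields continuous dependence of $g(u^\phi_{nj})u^\phi_{nj}$ on $(\phi_h,\varphi_h)$. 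Composing this with the inverse of the fixed invertible coefficient operator obtained above shows that $M$ is continuous. Since $X_{\tau,h}\times Y_{\tau,h}$ is finite-dimensional for fixed $\tau$ and $h$, any continuous self-map on it is automatically compact.

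The main obstacle is the uniqueness step for $e^u_h$. The naive energy identity obtained by testing with $v_h=P_\tau^n e^u_h$ and taking the imaginary part only recovers $e^u_h(t_n)=0$ at the right endpoint of $I_n$, which is insufficient to conclude $e^u_h\equiv 0$ on the whole interval; this forces the reduction to scalar collocation via the spectral decomposition of $\Delta_h$ and the appeal to the classical A-acceptability of the Gauss--Legendre stability function on the imaginary axis.
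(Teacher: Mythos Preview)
Your proof is correct and considerably more thorough than the paper's, which simply asserts that ``the linear equations \eqref{GL-uh-map}--\eqref{GL-rh-map} have a unique solution'' without further justification and then proceeds directly to continuity and compactness. Your spectral-decomposition argument for the $e^u_h$ uniqueness step---reducing to scalar Gauss--Legendre collocation for $\dot c=\i\mu c$ and invoking the fact that the $[k/k]$ Pad\'e denominator has no zeros on the imaginary axis---is valid and self-contained. An alternative route, closer to the paper's own toolbox and implicitly available from its later computations, is an energy argument: testing the homogeneous version of \eqref{GL-uh-map} with $v_h=(t_n-t)P_\tau^n e^u_h$ and taking the imaginary part, exactly as in the derivation of \eqref{L2L2-Ptau-uh} in Section~\ref{section:upper-bound} (or Step~1 of the proof of Lemma~\ref{Lemma:M-theta}), yields $\int_{I_n}\|P_\tau^n e^u_h\|^2\,\d t\le 0$, and then \eqref{L2-uh-Phuh} forces $e^u_h\equiv 0$ on $I_n$. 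Your approach imports Runge--Kutta stability theory but avoids the special test-function trick; the energy approach stays purely variational but reuses a computation the paper performs several times anyway. Your remark that the naive test $v_h=e^u_h$ only controls the endpoint is exactly right, and both routes are legitimate ways around that obstacle. The continuity and compactness parts agree with the paper's reasoning.
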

\begin{proof}
Since the right-hand sides of \eqref{GL-uh-map}--\eqref{GL-rh-map} are given, the linear equations \eqref{GL-uh-map}--\eqref{GL-rh-map} have a unique solution $(e^u_h,e^r_h)\in X_{\tau,h}\times Y_{\tau,h}$ for any given $(\phi_h,\varphi_h)\in X_{\tau,h}\times Y_{\tau,h}$. Thus the mapping is well defined. 
Let $\ell^u(\phi_h,\varphi_h;v_h)$ and $\ell^r(\phi_h,\varphi_h;q_h)$ denote the right-hand sides of \eqref{GL-uh-map} and \eqref{GL-rh-map}, respectively. Since all norms are equivalent in the finite-dimensional space $X_{\tau,h}\times Y_{\tau,h}$, it follows that 
\begin{align*}
|\ell^u(\phi_h,\varphi_h;v_h)-\ell^u(\hat\phi_h,\hat\varphi_h;v_h)|
&\le o(1) \|v_h\|_{L^2(0,T;L^2)}, \qquad \forall v_h\in L^2(0,T; L^2), \\
|\ell^r(\phi_h,\varphi_h;q_h)-\ell^r(\hat\phi_h,\hat\varphi_h;q_h)|
&\le o(1) \|q_h\|_{L^2(0,T)} \qquad \forall q_h\in L^2(0,T),
\end{align*}  
as $(\phi_h,\varphi_h)\rightarrow (\hat\phi_h,\hat\varphi_h)
\,\,\,\mbox{in}\,\,\, X_{\tau,h}\times Y_{\tau,h}$. Where $o(1)$ represents some quantity tending to zero. Using this property, it is easy to verify that $(e^u_h,e^r_h)$ is continuous with respect to $(\phi_h,\varphi_h)$. 

Since $X_{\tau,h}\times Y_{\tau,h}$ is a finite-dimensional space, a continuous mapping is automatically compact. The proof is complete.
\end{proof}

We are now ready to state and prove the following key lemma.

\begin{lemma}\label{Lemma:M-theta}
Let $1\leq d\leq 3$ and assume that the solution of the NLS equation \eqref{pde} is sufficiently smooth. Then there exist positive constants $\tau_0$ and $h_0$ such that when $\tau\le \tau_0$ and $h\le h_0$, 
the following statement holds: 
If $(\phi_h,\varphi_h)\in \mathfrak{B}$ and $(e^u_h,e^r_h)=M(\phi_h,\varphi_h)$, then 
\begin{align}\label{H1-stability}
&\|e^u_h\|_{L^\infty(0,T;H^1)} + \|e^r_h\|_{L^\infty(0,T)} \nonumber \\
&\le \Big[ \|e^u_h(0)\|_{H^1} +|e^r_h(0)|
+\max_{1\le n\le N}\max_{t \in I_n}\bigl( \|d_u^n\|_{H^1} + |P_\tau^n  d_r^n| \bigr) 
\Big] ,\\ 
&\max_{1\le n\le N}\max_{1\le j\le k} 
\|e^u_h(t_{nj})\|_{L^\infty\cap H^1} \le \frac12
\quad\mbox{and}\quad
\max_{1\le n\le N}\max_{1\le j\le k} |e^r_h(t_{nj})| \le \frac12 , \label{Linfty-e} \\[5pt]
&\rho[\phi_h]= 1, \quad\, \rho[\varphi_h]=1 .  \label{Linfty-phi}
\end{align}  
\end{lemma}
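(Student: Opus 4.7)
My plan is to exploit the Leray--Schauder setting in \eqref{GL-uh-map}--\eqref{GL-rh-map}: the hypothesis $(\phi_h,\varphi_h)\in\mathfrak{B}$ means $(\phi_h,\varphi_h)=\theta(e^u_h,e^r_h)$ for some $\theta\in[0,1]$, so any bound on $(e^u_h,e^r_h)$ is simultaneously a bound on $(\phi_h,\varphi_h)$ scaled by $\theta\le 1$. The crucial structural observation is that, \emph{regardless of the size of $\phi_h$}, the cutoff $\rho[\phi_h]$ defined in \eqref{def-rho_a}--\eqref{def-rho_b} combined with \eqref{phi-varphi-Linfty-1}--\eqref{phi-varphi-Linfty-2} keeps $u^\phi=u_h^*+\rho[\phi_h]\phi_h$ uniformly bounded in $L^\infty\cap H^1$ at every Gauss node. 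Consequently the nonlinear quantities $g(u^\phi_{nj})u^\phi_{nj}$ and their Lipschitz increments over $g(u^*_{nj})u^*_{nj}$ satisfy estimates of the form $\le C\rho[\phi_h]\|\phi_h(t_{nj})\|_{H^1}$ with a constant independent of $\phi_h$. This is the mechanism that replaces the usual grid-ratio inverse inequality.

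The first main step is the stability bound \eqref{H1-stability}. Following the energy-conservation proof of Theorem \ref{THM:conservation}, I would test \eqref{GL-uh-map} with $v_h=\partial_t e^u_h$, take the real part, test \eqref{GL-rh-map} with $q_h=2e^r_h$, and subtract: on the principal side this reproduces the telescoping identity of \eqref{con-energy} for the errors, namely the increment of $\tfrac12\|\nabla e^u_h\|^2-|e^r_h|^2$ across $I_n$. The right-hand side is controlled by the consistency bounds from Theorem \ref{THM:consistency} together with the nonlinear remainders estimated via the Lipschitz bound above and the Gauss-to-integral conversion \eqref{sum-inequality}. A companion mass-type test ($v_h=e^u_h$ with imaginary part) combined with \eqref{L2-uh-Phuh} produces the missing $\|e^u_h\|^2$ control needed to absorb the indefinite $|e^r_h|^2$ term. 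Iterating over $n=1,\dots,N$ via discrete Gronwall, and using that $\rho[\phi_h]\|\phi_h\|_{H^1}\le\theta\|e^u_h\|_{H^1}\le\|e^u_h\|_{H^1}$, yields \eqref{H1-stability}.

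With \eqref{H1-stability} established, inserting the consistency estimate from Theorem \ref{THM:consistency} and the initial errors ($e^r_h(0)=0$ and $\|e^u_h(0)\|_{H^1}=O(h^p)$ since $u_h^0=I_hu_0$) gives $\|e^u_h\|_{L^\infty(0,T;H^1)}+\|e^r_h\|_{L^\infty(0,T)}\le C(h^p+\tau^{k+1})$, and choosing $\tau_0,h_0$ small drives every relevant norm below $\tfrac12$. The main obstacle, and the place where the novelty of the analysis must be used, is the $L^\infty$-in-space part of \eqref{Linfty-e} for $d=2,3$ where $H^1\not\hookrightarrow L^\infty$. I would handle this by the route flagged in the introduction: derive an $H^{-1}$ bound for $\partial_t e^u_h$ from \eqref{GL-uh-map} by a duality argument against a smooth dual problem, then combine with the $H^1$ bound through a finite-element interpolation inequality, invoking the super-approximation Lemma \ref{Lemma:time-Ritz-superappx} to remove sub-optimal temporal factors coming from $\partial_t u$ in \eqref{IEQ-r}. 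Finally, since $\theta\in[0,1]$, the Gauss-node bounds $\|e^u_h(t_{nj})\|_{L^\infty\cap H^1}\le\tfrac12$ and $|e^r_h(t_{nj})|\le\tfrac12$ carry over to $\phi_h$ and $\varphi_h$, which via \eqref{def-rho_a}--\eqref{def-rho_b} forces $\rho[\phi_h]=\rho[\varphi_h]=1$, proving \eqref{Linfty-phi}.
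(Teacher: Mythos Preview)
Your overall architecture (Leray--Schauder, the role of the cutoff $\rho$, and the final step from \eqref{Linfty-e} to \eqref{Linfty-phi}) is correct, but the core energy argument you propose has a genuine gap, and your $L^\infty$ step is not what the paper does.

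\textbf{The subtraction argument does not close.} Testing \eqref{GL-uh-err} with $\partial_t e^u_h$ and \eqref{GL-rh-err} with $2e^r_h$ and \emph{subtracting} indeed cancels the shared term $\frac{\theta\tau}{2}\sum_j w_j\,{\rm Re}(e^r_{nj}g(u^\phi_{nj})u^\phi_{nj},\partial_t e^u_h(t_{nj}))$, but the left side becomes the increment of the \emph{indefinite} form $\tfrac12\|\nabla e^u_h\|^2-|e^r_h|^2$. Your proposed fix, a mass-type test $v_h=e^u_h$, only produces control of $\|e^u_h\|^2$; it says nothing about $|e^r_h|^2$ and therefore cannot absorb the negative term. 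The right side, moreover, still contains $\partial_t e^u_h$ paired with $r^*_{nj}[g(u^\phi)u^\phi-g(u^*)u^*]$ and with $d^n_u$, so you would in any case need the $H^{-1}$ duality bound \eqref{dteu-L2H-1} already at this stage, not only later for the $L^\infty$ bound. The paper avoids the indefiniteness entirely: it \emph{adds} the two tests in Step~3 so that the left side is the positive increment of $\|\nabla e^u_h\|^2+|e^r_h|^2$, and it controls the surviving $\partial_t e^u_h$ terms on the right via the $H^{-1}$ duality estimate \eqref{dteu-L2H-1}. To feed Gronwall, the paper first establishes in Steps~1--2 the internal bounds $\int_{I_n}\|e^u_h\|_{H^1}^2\,\d t$ and $\int_{I_n}|e^r_h|^2\,\d t$ in terms of the nodal values at $t_{n-1}$ and the data, by testing with the \emph{weighted} functions $v_h=(-\Delta_h)P_\tau^n[(t_n-t)P_\tau^n e^u_h]$ and $q_h=P_\tau^n[(t_n-t)P_\tau^n e^r_h]$ and taking imaginary parts (the analogue of the argument in Section~\ref{section:upper-bound}). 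These weighted tests are the missing ingredient in your plan; the plain mass test cannot substitute for them.

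\textbf{The $L^\infty$ bound.} Your description (an $H^{-1}$ bound on $\partial_t e^u_h$ plus ``a finite-element interpolation inequality'', together with Lemma~\ref{Lemma:time-Ritz-superappx}) does not match what is actually needed. Lemma~\ref{Lemma:time-Ritz-superappx} plays no role in this lemma; it is used only in the consistency estimate of Theorem~\ref{THM:consistency}. The paper obtains \eqref{Linfty-e} by combining two routes and taking the better of the two: (i) the inverse inequality $\|e^u_h\|_{L^\infty}\le C\ell_h\|e^u_h\|_{H^1}$ with $\ell_h=1,\,\ln(1/h),\,h^{-1/2}$ for $d=1,2,3$, and (ii) a discrete Agmon-type inequality $\|e^u_{nj}\|_{L^\infty}\le C\|e^u_{nj}\|_{H^1}^{1/2}\|\Delta_h e^u_{nj}\|^{1/2}$, where $\|\Delta_h e^u_{nj}\|$ is read off pointwise from \eqref{GL-uh-err} at $t=t_{nj}$, yielding a $\tau^{-1}$ factor through $\|\partial_t e^u_{nj}\|$ and hence an overall $\tau^{-1/2}$ loss. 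Since $\min(\ell_h,\tau^{-1/2})(h^p+\tau^{k+1})\to 0$, this gives \eqref{Linfty-e} without any grid-ratio condition.
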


\begin{proof} 
If $(\phi_h,\varphi_h)\in \mathfrak{B}$ and $(e^u_h,e^r_h)=M(\phi_h,\varphi_h)$, then 
$$
(\phi_h,\varphi_h)=\theta M(\phi_h,\varphi_h)=(\theta e^u_h,\theta e^r_h) ,
$$
which implies $\phi_h=\theta e^u_h$ and $\varphi_h=\theta e^r_h$. In this case, \eqref{GL-uh-map}--\eqref{GL-rh-map} can be rewritten as  
\begin{align}\label{GL-uh-err}
\i\int_{I_n}
\big(\partial_t e^u_h, v_h\big)\, \d t 
&= 
-\int_{I_n} \big(\nabla P^n_\tau e^u_h, \nabla v_h\big)\, \d t 
+\frac{\theta \tau}{2}\sum_{j=1}^k w_j 
\Big(e^r_{nj} g(u_{nj}^\phi) u_{nj}^\phi, v_{nj}\Big) \\
&\qquad\, +\frac{\tau}{2}\sum_{j=1}^k w_j 
\Big(r_{nj}^* \big[g(u_{nj}^\phi)u_{nj}^\phi - g(u_{nj}^*) u_{nj}^*\big], v_{nj}\Big) \nonumber \\
&\qquad - \int_{I_n} (P_\tau^n d_u^n,v_h) \d t, \nonumber \\
\label{GL-rh-err}
\int_{I_n} \partial_t e^r_h q_h \d t 
&=\frac{\tau}{4}\sum_{j=1}^k w_j 
{\rm Re}\big( q_{nj} \big( g(u_{nj}^\phi) u_{nj}^\phi -g(u_{nj}^*) u_{nj}^* \big) , \partial_t u_h^*(t_{nj}) \big)
\\
&\qquad\, + \frac{\theta\tau}{4}\sum_{j=1}^k w_j 
{\rm Re}\big( q_{nj} g(u_{nj}^\phi)u_{nj}^\phi , \partial_t e^u_h(t_{nj}) \big)  \nonumber \\
&\qquad - \int_{I_n} P_\tau^n d_r^n \, q_h \d t  , \nonumber
\end{align}
which hold for all $v_h\in \mathbb{P}^k\otimes S_h$ and $q_h\in \mathbb{P}^k$, $n=1,\dots,N$. In the following, 
we derive estimates for $e^u_h$ and $e^r_h$ based on equations \eqref{GL-uh-err}--\eqref{GL-rh-err}. 

From \eqref{phi-varphi-Linfty-1}--\eqref{phi-varphi-Linfty-2} and definition \eqref{def-e-phi} we get
\begin{align}\label{Linfty-uh-rh}
& \max_{1\le n\le N} \max_{1\le j\le k} \|u^\phi(t_{nj})\|_{L^\infty\cap H^1}
+ \max_{1\le n\le N} \max_{1\le j\le k}|r^\varphi(t_{nj})| \\
&\qquad \le \max_{1\le n\le N} \max_{1\le j\le k}
\|u_h^*(t_{nj})\|_{L^\infty\cap H^1} 
+ \max_{1\le n\le N} \max_{1\le j\le k} |r_h^*(t_{nj})| \nonumber \\
&\qquad \qquad + 
\max_{1\le n\le N} \max_{1\le j\le k} \|\rho[\phi_h]\phi_h(t_{nj})\|_{L^\infty\cap H^1}
+ \max_{1\le n\le N} \max_{1\le j\le k}
|\rho[\varphi_h]\varphi_h(t_{nj})| \nonumber \\
&\qquad \le \|u_h^*\|_{L^\infty(0,T;L^\infty\cap H^1)} 
+\|r_h^*\|_{L^\infty(0,T)} + 2 . \nonumber
\end{align}
Thus $\|u^\phi(t_{nj})\|_{L^\infty\cap H^1}$ and $|r^\varphi(t_{nj})|$ are bounded uniformly with respect to $\tau$ and $h$. 

Since the remainder of the proof is long, we divide it into four steps. 

\medskip
{\em Step 1: Estimation of $\|e^u_h\|_{L^2(I_n;H^1)}$.}
Note that 
\begin{align}\label{err-Tu2-pr}
&\int_{I_n} \|\nabla P_\tau^n e^u_h(t)\|^2 \d t 
=\int_{I_n}\big(\nabla e^u_h(t), \nabla P_\tau^n  e^u_h(t)\big) \d t \\
&\quad =\int_{I_n}
\bigg[ \big(\nabla e^u_h(t_{n-1}), \nabla P_\tau^n  e^u_h(t_{n-1}) \big) 
+ \int_{t_{n-1}}^t\partial_s\big(\nabla e^u_h(s), \nabla P_\tau^n  e^u_h(s)\big) \d s \bigg]\d t \nonumber\\ 
&\quad =\tau \big( \nabla e^u_h(t_{n-1}), \nabla P_\tau^n  e^u_h(t_{n-1})\big) +\int_{I_n} 
\partial_s\big( \nabla e^u_h(s), \nabla P_\tau^n e^u_h(s)\big) (t_n-s) \d s \nonumber\\
&\quad =\tau \big(\nabla e^u_h(t_{n-1}), \nabla P_\tau^n  e^u_h(t_{n-1})\big) \nonumber \\
&\qquad\, 
+{\rm Re}\int_{I_n}  \Big(\partial_t \nabla e^u_h(t), \nabla P_\tau^n  e^u_h(t)(t_n-t)\Big)\d t \nonumber\\
&\qquad\,
+{\rm Re}\int_{I_n} \Big(\nabla e^u_h(t), (t_n-t)\partial_t \nabla P_\tau^n  e^u_h(t)\Big) \d t \nonumber \\ 
&\quad =\tau \big(\nabla e^u_h(t_{n-1}), \nabla P_\tau^n e^u_h(t_{n-1})\big) \nonumber\\
&\qquad\, 
+{\rm Re}\int_{I_n} 
\Big(\partial_t \nabla e^u_h(t), P_\tau^n \big[\nabla P_\tau^n e^u_h(t)(t_n-t)\big]\Big)\d t \nonumber \\
&\qquad\,
+{\rm Re}\int_{I_n} \Big(\nabla P_\tau^n  e^u_h(t), (t_n-t)\partial_t \nabla P_\tau^n  e^u_h(t)\Big) \d t \nonumber \\
&\quad =: \tau \big(\nabla e^u_h(t_{n-1}), \nabla P_\tau^n  e^u_h(t_{n-1})\big) + J_{u1} + J_{u2} . \nonumber
\end{align}
In the second to last equality, we have used the identity
$$
\int_{I_n} \Big( \nabla e^u_h(t), (t_n-t)\partial_t \nabla P_\tau^n  e^u_h(t)\Big) \d t
=
\int_{I_n} \Big(\nabla P_\tau^n e^u_h(t), (t_n-t)\partial_t \nabla P_\tau^n  e^u_h(t)\Big) \d t ,
$$
which holds because $(t_n-t)\partial_t \nabla P_\tau^n e^u_h(t)$ is a polynomial of degree $k-1$ 
in time. 

Using integration by parts, we have 
\begin{align}\label{err-Tu2}
J_{u2} &=\int_{I_n}  \frac12 \frac{\d}{\d t} \|\nabla P_\tau^n e^u_h(t)\|^2(t_n-t) \d t \\
&=-\frac12 \|\nabla P_\tau^n e^u_h(t_{n-1})\|^2\tau
+ \int_{I_n} \frac12 \|\nabla P_\tau^n e^u_h(t)\|^2 \d t . \nonumber
\end{align}
Then substituting this into \eqref{err-Tu2-pr} yields 
\begin{align}\label{pn-err-u}
&\int_{I_n} \|\nabla P_\tau^n e^u_h(t)\|^2 \d t 
\le 
2\tau \big(\nabla e^u_h(t_{n-1}), \nabla P_\tau^n e^u_h(t_{n-1})\big)
+ 2 J_{u1} . 
\end{align} 
Setting $v_h = (-\Delta_h) P_\tau^n \big[P_\tau^n e^u_h(t)(t_n-t)\big]$ in \eqref{GL-uh-err} and taking the imaginary part yield
\begin{align} \label{err-Ju1-1}
&J_{u1}= {\rm Im}\int_{I_n} 
\Big({\rm i} \partial_t e^u_h(t), (-\Delta_h)  P_\tau^n \big[P_\tau^n  e^u_h(t)(t_n-t)\big]\Big)\d t\\
&\, =-{\rm Im} \int_{I_n} \big(\nabla P^n_\tau e^u_h(t), 
\nabla (-\Delta_h) P_\tau^n \big[P_\tau^n  e^u_h(t)(t_n-t)\big] \big)\, \d t 
\nonumber\\ 
&\quad +\frac{\theta\tau}{2}\sum_{j=1}^k w_j 
{\rm Im}\Big(e^r_{nj} g(u_{nj}^\phi)u_{nj}^\phi, 
(-\Delta_h)P_\tau^n \big[P_\tau^n e^u_{nj}(t_n-t_{nj}) \big]\Big)
\nonumber\\
&\quad +\frac{\tau}{2}\sum_{j=1}^k w_j {\rm Im}
\Big(r_{nj}^* \big(g(u_{nj}^\phi)u_{nj}^\phi - g(u_{nj}^*)u_{nj}^*\big), 
(-\Delta_h)P_\tau^n \big[P_\tau^n e^u_{nj}(t_n-t_{nj}) \big]\Big)
\nonumber\\
&\quad 
-\int_{I_n}{\rm Im} \big(P_\tau^n d_u^n, (-\Delta_h)P_\tau^n [P_\tau^n  e^u(t)(t_n-t)] \big) \d t
 =: \sum_{m = 1}^4 J_{u1}^{m} , \nonumber
\end{align}
where 
\begin{align}\label{err-Tu1-2}
J_{u1}^{1} &=-{\rm Im} \int_{I_n} \big(\Delta_hP^n_\tau e^u_h(t), 
\Delta_hP_\tau^n \big[P_\tau^n  e^u_h(t)(t_n-t)\big] \big)\, \d t  \\ 
&= -{\rm Im} \int_{I_n} 
\|\Delta_hP_\tau^n e^u_h(t)\|^2(t_n-t) \, \d t  = 0 . \nonumber 
\end{align}
From identity \eqref{Gauss-integral} and inequality \eqref{sum-inequality}, we have 
\begin{align*}  
J_{u1}^{2}
&=\frac{\tau \theta }{2} \sum_{j=1}^k w_j 
{\rm Im}\big( \nabla P_h[e^r_{nj} 
g(u_{nj}^\phi)u_{nj}^\phi], 
P_\tau^n [\nabla P_\tau^n e^u_{nj}(t_n-t_{nj}) ]\big)  \\
&\le 
\frac{\tau}{2}\sum_{j=1}^k w_j |e^r_{nj}|\| \nabla P_h[g( u_{nj}^\phi)u_{nj}^\phi]\|
\|P_\tau^n [\nabla P_\tau^n e^u_{nj}(t_n-t_{nj}) ]\| \nonumber\\
&\le \frac{\tau}{2}\sum_{j=1}^k w_j |e^r_{nj}|
\|\nabla [g(u_{nj}^\phi) u_{nj}^\phi]\| 
\|P_\tau^n [\nabla P_\tau^n e^u_{nj}(t_n-t_{nj}) ]\| 
\quad\mbox{(here \eqref{Ph-H1-stability} is used)} \nonumber\\
&\le
 C\tau \sum_{j=1}^k w_j |e^r_{nj}|
\|P_\tau^n [\nabla P_\tau^n e^u_{nj}(t_n-t_{nj}) ]\|  
\quad\mbox{(here \eqref{Linfty-uh-rh} is used)}\nonumber \\
&\le  \sum_{j=1}^k \frac{\tau}{8\tau^2} w_j\|P_\tau^n [\nabla P_\tau^n e^u_{nj}(t_n-t_{nj}) ]\|^2
+ C\tau^3 \sum_{j=1}^k w_j |e^r_{nj}|^2 
\nonumber\\ 
&\le \frac{1}{4\tau^2} \int_{I_n} \|P_\tau^n [\nabla P_\tau^n e^u_h(t)(t_n-t) ]\|^2 \d t
+ 2C\tau^2 \int_{I_n}|e^r_h|^2 \, \d t 
\nonumber\\ 
&\le \frac{1}{4}  \int_{I_n}\|\nabla P_\tau^n e^u_h\|^2 \, \d t
+ 2C\tau^2 \int_{I_n}|e^r_h|^2 \, \d t , \nonumber  
\end{align*}
\begin{align*}
J_{u1}^{3} &= \frac{\tau}{2}\sum_{j=1}^k w_j 
{\rm Im}\big(r_{nj}^* \nabla P_h\big[(g(u_{nj}^\phi)u_{nj}^\phi - g(u_{nj}^*)u_{nj}^*) \big], 
P_\tau^n [\nabla P_\tau^n e^u_{nj}(t_n-t_{nj}) ]\big) \\
&\le \frac{\tau}{2}\sum_{j=1}^k w_j
|r_{nj}^*| \|\nabla P_h\big[(g(P_\tau^n u_{nj}^\phi)P_\tau^n u_{nj}^\phi - g(P_\tau^n u_{nj}^*)P_\tau^n u_{nj}^*) \big]\| \\
&\hskip 1in \times \|P_\tau^n [\nabla P_\tau^n e^u_{nj}(t_n-t_{nj}) ]\| 
 \qquad\mbox{(here $u_{nj}^\phi=P_\tau^n u_{nj}^\phi$ is used)} \nonumber\\
&\le C\tau\sum_{j=1}^k w_j \|\nabla P_\tau^n e^u_{nj}\| \|P_\tau^n [\nabla P_\tau^n e^u_{nj}(t_n-t_{nj}) ]\| 
 \qquad\mbox{(here \eqref{Linfty-uh-rh} is used)} \nonumber\\
&\le C\tau \int_{I_n}\|\nabla P_\tau^n  e^u_h\|^2 \, \d t + C\tau^{-1}
\int_{I_n}\|P_\tau^n [\nabla P_\tau^n e^u_{nj}(t_n-t_{nj}) ]\|^2 \, \d t \nonumber\\
&\le C\tau  \int_{I_n}\|\nabla P_\tau^n  e^u_h\|^2 \, \d t,  \nonumber \\
J_{u1}^{4} &=-\int_{I_n} {\rm Im} \big(\nabla P_h P_\tau^n d_u^n, P_\tau^n [\nabla P_\tau^n  e^u(t)(t_n-t)] \big) \d t \\
&\le C\tau^2 \int_{I_n} \|\nabla P_hP_\tau^n d_u^n\|^2 \d t
+ \frac{1}{4\tau^2} \int_{I_n} \|P_\tau^n [\nabla P_\tau^n  e^u_{nj}(t_n-t_{nj})] \|^2 \d t \nonumber\\
&\le C\tau^3 \max_{t\in I_n} \|d_u^n\|_{H^1}^2
+ \frac{1}{4} \int_{I_n} \|\nabla P_\tau^n e^u_h\|^2 \d t .  \nonumber
\end{align*}
Substituting \eqref{err-Ju1-1} and the estimates of $J_{u1}^m$, $m=1,2,3,4$, into \eqref{pn-err-u}, for sufficiently small step size $\tau$ we obtain 
\begin{align*}
&\int_{I_n} \|\nabla P_\tau^n e^u_h\|^2 \d t \\
&\le C\tau  \big(\nabla e^u_h(t_{n-1}), \nabla P_\tau^n e^u_h(t_{n-1})\big)
+C\tau^2 \int_{I_n}  |e^r_h|^2 \d t   + C\tau^3 \max_{t\in I_n} \|d_u^n\|_{H^1}^2  \nonumber\\
&\le C\tau  \|\nabla e^u_h(t_{n-1})\| \|\nabla P_\tau^n e^u_h(t_{n-1})\|
+C\tau^2 \int_{I_n}  |e^r_h|^2 \d t  + C\tau^3 \max_{t\in I_n} \|d_u^n\|_{H^1}^2\nonumber\\
&\le C\tau \|\nabla e^u_h(t_{n-1})\| 
\bigg(\frac{1}{\tau}\int_{I_n} \|\nabla P_\tau^n e^u_h\|^2 \d t\bigg)^{\frac12}
+C\tau^2 \int_{I_n} |e^r_h|^2 \d t  + C\tau^3 \max_{t\in I_n} \|d_u^n\|_{H^1}^2 \nonumber\\
&\le C\tau \|\nabla e^u_h(t_{n-1})\|^2 
+\frac12\int_{I_n} \|\nabla P_\tau^n e^u_h\|^2 \d t 
+C\tau^2 \int_{I_n} |e^r_h|^2 \d t  
+C\tau^3 \max_{t\in I_n} \|d_u^n\|_{H^1}^2,
\end{align*}
which then implies 
\begin{align*}
\int_{I_n} \|\nabla P_\tau^n e^u_h\|^2 \d t 
&\le C\tau \|\nabla e^u_h(t_{n-1})\|^2 
+C\tau^2 \int_{I_n} |e^r_h|^2 \d t 
+ C\tau^3 \max_{t\in I_n} \|d_u^n\|_{H^1}^2 .
\end{align*}
By using inequality \eqref{L2-uh-Phuh}, we can remove the operator $P_\tau^n$ in the above inequality (meanwhile replace the $H^1$ seminorm by the full norm), i.e.,  
\begin{align}\label{pn-err-u-com}
\int_{I_n} \|e^u_h\|_{H^1}^2 \d t 
&\le C\tau \|e^u_h(t_{n-1})\|_{H^1}^2 
+C\tau^2 \int_{I_n} |e^r_h|^2 \d t 
+ C\tau^3 \max_{t\in I_n} \|d_u^n\|_{H^1}^2 .
\end{align} 

{\em Step 2: Estimation of $\|e^r_h\|_{L^2(I_n)}$.}
To estimate the second term on the right-hand side of \eqref{pn-err-u-com}, we proceed similarly as \eqref{err-Tu2-pr}, that is,   
\begin{align}\label{pn-err-r-pr}
\int_{I_n} |P_\tau^n e^r_h|^2 \d t 
&=\int_{I_n} e^r_h(t) \, P_\tau^n e^r_h(t) \, \d t  \\
&=\int_{I_n} 
\bigg[e^r_h(t_{n-1}) P_\tau^n e^r_h(t_{n-1}) 
+\int_{t_{n-1}}^t \partial_s\big(e^r_h(s) P_\tau^n e^r_h(s)\big) \d s \bigg] \d t \nonumber\\
&=\tau e^r_h(t_{n-1}) P_\tau^n e^r_h(t_{n-1}) 
+\int_{I_n}  \partial_t e^r_h(t) P_\tau^n \big[P_\tau^n e^r_h(t)(t_n-t)\big] \d t \nonumber\\
&\qquad 
+\int_{I_n} e^r_h(t) (t_n-t)\partial_t P_\tau^n e^r_h(t) \d t \nonumber \\
&=\tau  e^r_h(t_{n-1}) P_\tau^n e^r_h(t_{n-1}) 
+\int_{I_n} \partial_t e^r_h(t) P_\tau^n \big[P_\tau^n e^r_h(t)(t_n-t)\big] \d t \nonumber\\
&\qquad +\int_{I_n} P_\tau^n e^r_h(t) (t_n-t)\partial_t P_\tau^n e^r_h(t) \d t \nonumber \\
&=:\tau e^r_h(t_{n-1}) P_\tau^n e^r_h(t_{n-1}) + J_{r1} + J_{r2} , \nonumber
\end{align}
where we have changed the order of integration in the third equality, and used the the following identity in the second to last equality:
$$
\int_{I_n}  e^r_h(t) (t_n-t)\partial_t P_\tau^n e^r_h(t) \d t
=\int_{I_n}  P_\tau^n e^r_h(t) (t_n-t)\partial_t P_\tau^n e^r_h(t)\d t,
$$
which holds because $(t_n-t)\partial_t P_\tau^n e^r_h(t)$ is a polynomial of degree $k-1$ in time. 

Using integration by parts, we have  
\begin{align}\label{err-Tr2}
J_{r2} &=\int_{I_n}  \frac12 \frac{\d}{\d t} |P_\tau^n e^r_h(t)|^2(t_n-t) \d t  \\
&= -\frac12 |P_\tau^n e^r_h(t_{n-1})|^2\tau 
+ \int_{I_n}  \frac12 |P_\tau^n e^r_h(t)|^2 \d t 
\le \frac12 \int_{I_n}   |P_\tau^n e^r_h(t)|^2 \d t.  \nonumber
\end{align}
Then substituting \eqref{err-Tr2} into \eqref{pn-err-r-pr} yields 
\begin{align*}
\int_{I_n} |P_\tau^n e^r_h|^2 \d t 
&\le 2\tau \big(e^r_h(t_{n-1}), P_\tau^n e^r_h(t_{n-1})\big) 
+ 2J_{r1} \nonumber\\
&\le C\tau|e^r_h(t_{n-1})| \|P_\tau^n e^r_h\|_{L^\infty(I_n)}
+ 2J_{r1} \nonumber\\
&\le C\tau|e^r_h(t_{n-1})| \bigg(\frac{1}{\tau}\int_{I_n} |P_\tau^n e^r_h|^2 \d t\bigg)^{\frac12}
+ 2J_{r1}  \\
&\le C\tau|e^r_h(t_{n-1})|^2
+ \frac{1}{2} \int_{I_n} |P_\tau^n e^r_h|^2 \d t 
+ 2J_{r1} ,
\end{align*}
which then implies 
\begin{align}\label{pn-err-r}
\int_{I_n} |P_\tau^n e^r_h|^2 \d t 
&\le C\tau|e^r_h(t_{n-1})|^2 + 4 J_{r1} ,
\end{align}

In order to estimate $J_{r1}$, we choose $q_h = P_\tau^n \big[P_\tau^n e^r_h(t)(t_n-t)\big]$ in \eqref{GL-rh-err}, which yields the following identity:  
\begin{align} \label{err-Jr1}
&J_{r1} 
=\int_{I_n} 
\partial_te^r_h(t) P_\tau^n \big[P_\tau^n e^r_h(t) (t_n-t)\big]\d t \\
&\quad =-\frac{\tau}{4}\sum_{j=1}^k w_j 
{\rm Re}\Big( 
P_\tau^n [P_\tau^n e^r_{nj}(t_n-t_{nj}) ] \big(g(u_{nj}^\phi) u_{nj}^\phi -g(u_{nj}^*)u_{nj}^*\big) , \partial_t u_h^*(t_{nj}) \Big)
\nonumber\\
&\qquad + \frac{\tau\theta}{4} \sum_{j=1}^k w_j 
{\rm Re}\Big(  
P_\tau^n [P_\tau^n e^r_{nj}(t_n-t_{nj}) ] g(u_{nj}^\phi)u_{nj}^\phi  ,  \partial_t e^u_h(t_{nj})  \Big)
\nonumber\\
&\qquad  - \int_{I_n} P_\tau^nd_r^n P_\tau^n [P_\tau^n e^r(t)(t_n-t) ] \d t  
=\!: \sum_{m = 1}^3 J_{r1}^{m} .  \nonumber
\end{align}
By using \eqref{sum-inequality}, we have 
\begin{align} \label{err-Jr1-1}
J_{r1}^1 
&\le \frac{\tau}{4}\sum_{j=1}^k w_j 
|P_\tau^n [P_\tau^n e^r_{nj}(t_n-t_{nj}) ]| 
\|g(u_{nj}^\phi) u_{nj}^\phi -g(u_{nj}^*) u_{nj}^* \|
\|\partial_t u_h^*(t_{nj})\| \\
&\le C\tau \sum_{j=1}^k w_j 
|P_\tau^n [P_\tau^n e^r_{nj}(t_n-t_{nj}) ]|  
\|e^u_{nj} \| \nonumber\\
&\le \frac{1}{8\tau^2}  \frac{\tau}{2}\sum_{j=1}^k w_j 
|P_\tau^n [P_\tau^n e^r_{nj}(t_n-t_{nj}) ]| ^2 
+C\tau^2 \frac{\tau}{2}\sum_{j=1}^k w_j \|e^u_{nj} \|^2 \nonumber\\
&\le \frac{1}{8\tau^2} 
\int_{I_n}|P_\tau^n [P_\tau^n e^r_h(t)(t_n-t) ]|^2 \, \d t
+ C\tau^2 \int_{I_n}\|P_\tau^n  e^u_h \|^2 \, \d t
\nonumber\\
&\le  \frac{1}{8} \int_{I_n}|P_\tau^ne^r_h|^2 \, \d t
+ C\tau^2 \int_{I_n}\|e^u_h\|^2 \, \d t , \nonumber 
\end{align}
\begin{align}
\label{err-Jr1-2}
J_{r1}^{2} &\le 
\frac{\tau}{4}\sum_{j=1}^k w_j 
|P_\tau^n [P_\tau^n e^r_{nj}(t_n-t_{nj}) ]|
\|g(u_{nj}^\phi) u_{nj}^\phi\|_{L^{\infty}}
\| \partial_t e^u_h(t_{nj}) \| \\ 
&\le \frac{1}{8\tau^2}  \frac{\tau}{2}\sum_{j=1}^k w_j 
|P_\tau^n [P_\tau^n e^r_{nj}(t_n-t_{nj}) ]|^2
+C\tau^2 \frac{\tau}{2}\sum_{j=1}^k w_j \| \partial_t e^u_h(t_{nj}) \| ^2 \nonumber\\ 
&\le \frac{1}{8\tau^2} \int_{I_n} |P_\tau^n [P_\tau^n e^r_h(t)(t_n-t) ]|^2 \, \d t
+ C\tau^2 \int_{I_n} \|\partial_te^u_h\|^2 \, \d t \nonumber\\
&\le \frac{1}{8} \int_{I_n} |P_\tau^ne^r_h|^2 \, \d t
+ C\int_{I_n} \|e^u_h\|^2 \, \d t ,  \nonumber \\ 
\label{err-Jr1-3} J_{r1}^{3} 
&= - \int_{I_n} P_\tau^nd_r^n P_\tau^n [P_\tau^n e^r(t)(t_n-t) ] \d t \\
&\le C\tau^2 \int_{I_n}  |P_\tau^nd_r^n|^2 \d t
+ \frac{1}{8\tau^2}\int_{I_n}|P_\tau^n [P_\tau^n e^r_h(t)(t_n-t) ]|^2\d t \nonumber\\
&\le  C\tau^3 \max_{t\in I_n} |P_\tau^nd_r^n|^2
+ \frac{1}{8} \int_{I_n}|P_\tau^ne^r_h|^2\d t.  \nonumber
\end{align}
Substituting  \eqref{err-Jr1-1}--\eqref{err-Jr1-3} into \eqref{pn-err-r}--\eqref{err-Jr1}, and using \eqref{L2-uh-Phuh}, we get
\begin{align}\label{pn-err-r-com}
\int_{I_n} |e^r_h|^2 \d t 
\le C\tau |e^r_h(t_{n-1})|^2  
 +C\int_{I_n} \|e^u_h\|^2 \d t 
+ C\tau^3 \max_{t\in I_n} |P_\tau^n d_r^n|^2 . 
\end{align}
Then, combining \eqref{pn-err-u-com} and \eqref{pn-err-r-com}, we obtain (for sufficiently small $\tau$)
\begin{align}\label{eu-L2H1-1}
&\int_{I_n} \|e^u_h\|_{H^1}^2 \,\d t 
\le C\tau \bigl[\|e^u_h(t_{n-1})\|_{H^1}^2 + |e^r_h(t_{n-1})|^2 \\
&\hskip 1.2in +\tau^2 \max_{t\in I_n} \bigl(\|d_u^n\|_{H^1}^2 + |P_\tau^n d_r^n|^2 \bigr) \bigr], 
\nonumber \\ 
&\int_{I_n} |e^r_h|^2 \d t 
\le C\tau \bigl[ \|e^u_h(t_{n-1})\|_{H^1}^2 + |e^r_h(t_{n-1})|^2  
+ \tau^2 \max_{t\in I_n} \bigl(\|d_u^n\|_{H^1}^2 + |P_\tau^n d_r^n|^2 \bigr) \bigr] .
\label{er-L2-1}
\end{align}
{\em Step 3: Estimation of $\|\nabla e^u_h(t_n)\|$ and $|e^r_h(t_n)|$.}
Setting $v_h = \partial_t e^u_h$ in \eqref{GL-uh-err} and taking the real part, we get 
\begin{align} \label{err-Eu}
&\frac 12\|\nabla e^u_h(t_{n})\|^2 - \frac 12\|\nabla e^u_h(t_{n-1})\|^2  
=\frac{\theta\tau}{2}\sum_{j=1}^k w_j 
{\rm Re}\,\big(e^r_{nj} g(u_{nj}^\phi)u_{nj}^\phi ,
 \partial_t e^u_h(t_{nj}) \big) \\
&\,  +\frac{\tau}{2}\sum_{j=1}^k w_j \, {\rm Re}
\Big(r_{nj}^* \big[g(u_{nj}^\phi)u_{nj}^\phi - g(u_{nj}^*)u_{nj}^*\big] , 
 \partial_t e^u_h(t_{nj}) \Big)  
- \int_{I_n}{\rm Re}\, (d_u^n, \partial_t e^u_h) \d t \nonumber\\
&\le C\tau \sum_{j=1}^k w_j \bigl(|e^r_{nj}|
+ \|\nabla e^u_{nj}\|\bigr)\| \partial_t e^u_h(t_{nj}) \|_{H^{-1}}
+ C\int_{I_n}\|d_u^n\|_{H^1} \|\partial_t e^u_h \|_{H^{-1}} \d t \nonumber\\
&\le C\int_{I_n}(\|e^u_h \|_{H^1}^2 +|e^r_{nj}|^2 + \|d_u^n\|_{H^1}^2) \d t 
+ \frac12 \int_{I_n}\|\partial_t e^u_h\|_{H^{-1}}^2\d t . \nonumber
\end{align} 
In order to estimate the last term $\int_{I_n}\|\partial_t e^u_h\|_{H^{-1}}^2\d t$ above, we consider \eqref{GL-uh-err}, from which we can derive the following estimate for any test function $v\in L^2(I_n;H^1_0)$: 
\begin{align*}
\Bigl|\int_{I_n}(\partial_t e^u_h,v)\d t\Bigr| 
&= \Bigl| \i\int_{I_n} \big(\nabla e^u_h, \nabla P_hP_\tau^nv\big)\, \d t 
- \i \frac{\theta \tau}{2} \sum_{j=1}^k w_j 
\Big(e^r_{nj} g(u_{nj}^\phi) u_{nj}^\phi, P_hP_\tau^n v(t_{nj}) \Big)\\
&\quad\, 
 -\i \frac{\tau}{2} \sum_{j=1}^k w_j 
\Big(r_{nj}^* \big[g(u_{nj}^\phi)u_{nj}^\phi - g(u_{nj}^*) u_{nj}^*\big], P_hP_\tau^n v(t_{nj}) \Big)\\
&\quad\, 
+ \i \int_{I_n} (d_u^n, P_hP_\tau^nv) \d t \Bigr|  \\
&\le C \bigl( \|e^u_h\|_{L^2(I_n;H^1)}+|e^r_h|_{L^2(I_n)} + \|d_u^n\|_{L^2(I_n;H^{-1})} \bigr)
\|P_hP_\tau^nv\|_{L^2(I_n;H^1_0)} \\
&\le C \bigl( \|e^u_h\|_{L^2(I_n;H^1)}+|e^r_h|_{L^2(I_n)} + \|d_u^n\|_{L^2(I_n;H^{-1})} \bigr)
\|v\|_{L^2(I_n;H^1_0)} .
\end{align*}
By the duality between $L^2(I_n;H^{-1})$ and $L^2(I_n;H^1_0)$, we obtain
\begin{align}\label{dteu-L2H-1}
\int_{I_n}\|\partial_t e^u_h\|_{H^{-1}}^2\d t
\le 
C\int_{I_n}(\|e^u_h \|_{H^1}^2+|e^r_h|^2+\|d_u^n\|_{H^{-1}}^2)\d t .
\end{align}
Then, summing up \eqref{err-Eu} and \eqref{dteu-L2H-1}, we have 
\begin{align}  \label{H1-euh-tn}
\|\nabla e^u_h(t_{n})\|^2 - \|\nabla e^u_h(t_{n-1})\|^2 
&+\int_{I_n}\|\partial_t e^u_h\|_{H^{-1}}^2\d t  \\
&\le C\int_{I_n} \bigl(\|e^u_h \|_{H^1}^2+|e^r_h|^2 + \|d_u^n\|_{H^1}^2\bigr)\d t . \nonumber
\end{align} 

Setting $q_h=2e^r_h$ in \eqref{GL-rh-err} yields 
\begin{align}\label{erh-tn}
|e^r_h(t_{n})|^2 -|e^r_h(t_{n-1})|^2  
&=\frac{\tau}{2}\sum_{j=1}^k w_j 
{\rm Re}\big( e^r_{nj} \big( g(u_{nj}^\phi) u_{nj}^\phi -g(u_{nj}^*) u_{nj}^* \big) , \partial_t u_h^*(t_{nj}) \big)
\nonumber \\
&\quad\, + \frac{\theta\tau}{2}\sum_{j=1}^k w_j 
{\rm Re}\big( e^r_{nj} g(u_{nj}^\phi)u_{nj}^\phi , \partial_t e^u_h(t_{nj}) \big) \nonumber \\
&\quad\,
- \int_{I_n} P_\tau^n d_r^n 2e^r \d t    \\ 
&\le C\tau \sum_{j=1}^k w_j |e^r_{nj}| \|e^u_{nj}\|  
+ C\tau \sum_{j=1}^k w_j |e^r_{nj}| \|\partial_t e^u_{nj}\|_{H^{-1}} \nonumber \\
&\quad\,
+ C\int_{I_n} |P_\tau^n d_r^n| |e^r| \d t \nonumber \\
&\le C\int_{I_n} \bigl( \|e^u_h\|^2+|e^r_h|^2 + \|\partial_t e^u_h\|_{H^{-1}}^2 \bigr) \d t \nonumber 
+C\int_{I_n} |P_\tau^n d_r^n |^2 \d t \\
&\le C\int_{I_n} \bigl( \|e^u_h\|_{H^1}^2+|e^r_h|^2 \bigr) \d t
+C \int_{I_n} (\|d_u^n\|_{H^1}^2 + |P_\tau^n d_r^n |^2) \d t ,  \nonumber
\end{align} 
where we have used \eqref{dteu-L2H-1} to obtain the last inequality. 

{\em Step 4: Completion of the proof.} 
Summing up \eqref{H1-euh-tn} and \eqref{erh-tn} yields 
\begin{align}
\|\nabla e^u_h(t_{n})\|^2 +|e^r_h(t_{n})|^2
- \|\nabla e^u_h(t_{n-1})\|^2 -|e^r_h(t_{n-1})|^2 
+\int_{I_n}\|\partial_t e^u_h\|_{H^{-1}}^2\d t  \\
 \le
C\int_{I_n}  \bigl( \|e^u_h\|_{H^1}^2+|e^r_h|^2 \bigr) \d t 
+C \int_{I_n}(\|d_u^n\|_{H^1}^2 + |P_\tau^n d_r^n |^2) \d t . \nonumber
\end{align} 
Then, substituting \eqref{eu-L2H1-1}--\eqref{er-L2-1} into the inequality above, we obtain 
\begin{align}
\bigl(\|\nabla e^u_h(t_{n})\|^2 +|e^r_h(t_{n})|^2 \bigr) 
- \bigl(\|\nabla e^u_h(t_{n-1})\|^2 +|e^r_h(t_{n-1})|^2 \bigr)
+\int_{I_n}\|\partial_t e^u_h\|_{H^{-1}}^2\d t \\
\le C\tau \bigl(\|\nabla e^u_h(t_{n-1})\|^2 + |e^r_h(t_{n-1})|^2 \bigr) 
+C\int_{I_n}(\|d_u^n\|_{H^1}^2 + |P_\tau^n d_r^n |^2) \d t. \nonumber
\end{align} 
It follows from Gronwall's inequality that 
\begin{align}\label{L2H1-dt-euh}
\max_{1\le n\le N}
&\bigl(\|\nabla e^u_h(t_{n})\|^2 +|e^r_h(t_{n})|^2 \bigr) 
+C\int_0^T\|\partial_t e^u_h\|_{H^{-1}}^2\, \d t  \\
&\le 
C\big( \|e^u_h(0)\|_{H^1}^2 +|e^r_h(0)|^2 \big) 
+ C\sum_{n=1}^N \int_{I_n} (\|d_u^n\|_{H^1}^2 + |P_\tau^n d_r^n |^2) \d t. \nonumber
\end{align}
Then, substituting this inequality into \eqref{eu-L2H1-1}--\eqref{er-L2-1} and using temporal inverse inequlaity, we obtain 
\begin{align}\label{H1-L2-euh-erh}
&\max_{t\in [0,T]}
\bigl( \|e^u_h(t)\|_{H^1}^2 +|e^r_h(t)|^2 \bigr)  \nonumber \\
&\le
C\Big[ \|e^u_h(0)\|_{H^1}^2 +|e^r_h(0)|^2
+\max_{1\le n\le N}\max_{t \in I_n}\bigl( \|d_u^n\|_{H^1}^2 + |P_\tau^n  d_r^n|^2 \bigr) 
\Big] .
\end{align}
Hence, \eqref{H1-stability} holds. 

When $\tau$ and $h$ are sufficiently small, inequality \eqref{H1-L2-euh-erh} implies that 
\begin{align}\label{H1-euh-er-small}
\max_{t\in [0,T]}\|e^u_h(t)\|_{H^1}\le \frac12 
\qquad\mbox{and}\qquad 
\max_{t\in [0,T]}|e^r_h(t)|  \le \frac12 . 
\end{align}
On the one hand, by the inverse inequality, we have 
\begin{align}\label{eu-Linfty-1}
\max_{t\in [0,T]}\|e^u_h(t)\|_{L^\infty}
&\le C\ell_h \max_{t\in [0,T]}\|e^u_h(t)\|_{H^1} \nonumber \\
&\le C\ell_h 
\Big[ \|e^u_h(0)\|_{H^1} +|e^r_h(0)|
+\max_{1\le n\le N}\max_{t \in I_n}\bigl( \|d_u^n\|_{H^1} + |P_\tau^n  d_r^n| \bigr) 
\Big]
, 
\end{align}
where
$$
\ell_h = \left\{
\begin{aligned}
&1 &&\mbox{if}\,\,\, d=1,\\
&\ln(2+1/h) &&\mbox{if}\,\,\, d=2,\\
&h^{-\frac{1}{2}} &&\mbox{if}\,\,\, d= 3 .
\end{aligned}
\right.
$$
On the other hand, by choosing a test function $v$ in \eqref{GL-uh-err} satisfying the  properties $v(t_{nj})=1$ and $v(t_{ni})=0$ for $i\neq j$, and using property \eqref{Ptau-tnj}, we obtain 
\begin{align}\label{discrete-H2-euh}
\|\Delta_h e^u_{nj}\| & = \Bigl\| \i\partial_t e^u_{nj}-
\theta P_h \big[e^r_{nj} g(u_{nj}^\phi)u_{nj}^\phi \big]
  + P_hd_{nj}^u  \\
  &\qquad\qquad - P_h  \big[r_{nj}^* (g(u_{nj}^\phi)u_{nj}^\phi - g(u_{nj}^*) u_{nj}^*) \big] \Bigr\| \nonumber \\
&\le C\tau^{-1} 
\Big[ \|e^u_h(0)\|_{H^1} +|e^r_h(0)|
+\max_{1\le n\le N}\max_{t \in I_n}\bigl( \|d_u^n\|_{H^1} + |P_\tau^n  d_r^n| \bigr) 
\Big]
, \nonumber
\end{align}
where we have used \eqref{L2H1-dt-euh}--\eqref{H1-L2-euh-erh} and an inverse inequality in time in estimating $\partial_t e^u_{nj}$. By the discrete Sobolev embedding inequality, for $1\leq d\leq 3$ we have  
\begin{align}\label{eu-Linfty-2}
\|e^u_{nj}\|_{L^\infty}
&\le
C\|e^u_{nj}\|_{H^1}^{\frac12}
\|\Delta_h e^u_{nj}\|^{\frac12} \nonumber \\
&\le
C\tau^{-\frac12} \max_{1\le n\le N}
\Big[ \|e^u_h(0)\|_{H^1} +|e^r_h(0)|
+\max_{1\le n\le N}\max_{t \in I_n}\bigl( \|d_u^n\|_{H^1} + |P_\tau^n  d_r^n| \bigr) 
\Big] ,
\end{align}
where we have used \eqref{H1-L2-euh-erh} and \eqref{discrete-H2-euh} in the last inequality. 
Then, combining \eqref{eu-Linfty-1} and \eqref{eu-Linfty-2} yields 
\begin{align*}
&\max_{1\le n\le N} \max_{1\le j\le k} 
\|e^u(t_{nj})\|_{L^\infty} \\
& \le
C\min(\ell_h,\tau^{-\frac12}) \Big[ \|e^u_h(0)\|_{H^1} +|e^r_h(0)|
+\max_{1\le n\le N}\max_{t \in I_n}\bigl( \|d_u^n\|_{H^1} + |P_\tau^n  d_r^n| \bigr) 
\Big]
\\
& \le C \bigl(h^{p-\frac12} + \tau^{k+\frac12} \bigr) ,
\end{align*}
where we have used the consistency estimate from Theorem \ref{THM:consistency}.
When $\tau$ and $h$ are sufficiently small, the inequality above implies 
\begin{align}\label{Linfty-euh-small}
\max_{1\le n\le N} \max_{1\le j\le k} \|e^u(t_{nj})\|_{L^\infty} \le \frac12 \, . 
\end{align}
This together with \eqref{H1-euh-er-small} gives \eqref{Linfty-e}. 

Furthermore, since $\phi_h=\theta e^u_h$ and $\varphi_h=\theta e^r_h$, it follows that 
$$
\max_{1\le n\le N} \max_{1\le j\le k} \|\phi_h(t_{nj})\|_{L^{\infty}\cap H^1}\le \frac12
\quad\mbox{and}\quad
\max_{1\le n\le N} \max_{1\le j\le k}|\varphi_h(t_{nj})| \le \frac12 ,
$$
which imply $\rho[\phi_h]=\rho[\varphi_h]=1$ in view of the definition in \eqref{def-rho}. This proves \eqref{Linfty-phi}. 
\end{proof} 

We now are ready to state and prove existence, uniqueness and convergence of numerical solutions, which comprise of the second main theorem of this paper. 

\begin{theorem}\label{THM:main}
	Let $1\leq d\leq 3$ and assume that the solution of the NLS equation \eqref{pde} is sufficiently smooth. Then there exist positive constants $\tau_0$ and $h_0$ such that when $\tau\le \tau_0$ and $h\le h_0$, the numerical method \eqref{GL} has a unique solution $(u_h,r_h) \in X_{\tau,h}^*\times Y_{\tau,h}^*$. Moreover, this solution satisfies the following error estimate:
	\begin{equation}\label{error_bound}
	\max_{t\in[0,T]}
	\Bigl( \|u_h(t)-u_h^*(t)\|_{H^1} + |r_h(t)-r_h^*(t)| \Bigr) 	\le C(h^{p}+\tau^{k+1}) . 
\end{equation}
\end{theorem}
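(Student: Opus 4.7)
The plan is to derive Theorem~\ref{THM:main} from Lemma~\ref{Lemma:M-theta} by invoking Schaefer's fixed point theorem applied to the mapping $M$. The preceding lemma already shows $M$ is continuous and compact on the finite-dimensional space $X_{\tau,h}\times Y_{\tau,h}$, so only the boundedness of the set $\mathfrak{B}$ remains. For any $(\phi_h,\varphi_h)\in\mathfrak{B}$ one has $(\phi_h,\varphi_h)=(\theta e^u_h,\theta e^r_h)$ with $(e^u_h,e^r_h)=M(\phi_h,\varphi_h)$, so the stability bound \eqref{H1-stability} combined with the consistency estimate in Theorem~\ref{THM:consistency} and the initial-data estimate $\|e^u_h(0)\|_{H^1}=\|I_h u_0-R_h u_0\|_{H^1}=O(h^p)$, $e^r_h(0)=0$ yields
\begin{equation*}
\|(\phi_h,\varphi_h)\|_{X_{\tau,h}\times Y_{\tau,h}}
\le \|e^u_h\|_{L^\infty(0,T;H^1)}+\|e^r_h\|_{L^\infty(0,T)}
\le C(h^p+\tau^{k+1}).
\end{equation*}
Hence $\mathfrak{B}$ is bounded uniformly in $\tau,h$, and Schaefer's theorem produces a fixed point $(e^u_h,e^r_h)=M(e^u_h,e^r_h)$.

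At this fixed point one has $\phi_h=e^u_h$ and $\varphi_h=e^r_h$, which corresponds to the case $\theta=1$. Lemma~\ref{Lemma:M-theta} then gives $\rho[e^u_h]=\rho[e^r_h]=1$, so $u^\phi=u_h^*+e^u_h$ and $r^\varphi=r_h^*+e^r_h$; substituting these into \eqref{GL-uh-map}--\eqref{GL-rh-map} recovers exactly the error equations \eqref{GL-uh-err-0}--\eqref{GL-rh-err-0}. Defining $u_h:=u_h^*+e^u_h$ and $r_h:=r_h^*+e^r_h$ therefore produces a solution of the Main Algorithm by Remark~\ref{Remark:existence}, and inequality \eqref{Linfty-e} of the lemma guarantees exactly the Gauss-node bounds required for membership in $X_{\tau,h}^*\times Y_{\tau,h}^*$. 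The error bound \eqref{error_bound} is then read off from \eqref{H1-stability} combined once more with Theorem~\ref{THM:consistency} and the $O(h^p)$ bound on $\|e^u_h(0)\|_{H^1}$.

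For uniqueness in $X_{\tau,h}^*\times Y_{\tau,h}^*$, I would take two solutions $(u_h^{(i)},r_h^{(i)})$, $i=1,2$, and set $E^u:=u_h^{(1)}-u_h^{(2)}$, $E^r:=r_h^{(1)}-r_h^{(2)}$. Subtracting the two scheme instances gives a system structurally identical to \eqref{GL-uh-rh-err-0} but with $u_h^{(2)},r_h^{(2)}$ playing the role of $u_h^*,r_h^*$, with zero consistency errors and zero initial data. Because both solutions lie in $X_{\tau,h}^*$, their Gauss-node values are within an $L^\infty\cap H^1$-ball of radius $\tfrac12$ around $u_h^*(t_{nj})$, so the nonlinearities $u\mapsto g(u)u$ and $r,u\mapsto r\,g(u)u$ are Lipschitz there with constants independent of $\tau,h$. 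Repeating the energy chain \eqref{pn-err-u-com}, \eqref{pn-err-r-com}, \eqref{H1-euh-tn}, \eqref{erh-tn} verbatim (with the new driving terms all proportional to $E^u,E^r$) and applying Gronwall's inequality forces $E^u\equiv 0$, $E^r\equiv 0$.

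The main obstacle has in fact already been discharged inside Lemma~\ref{Lemma:M-theta}: the delicate simultaneous control of $\nabla e^u_h$ and $e^r_h$ in $L^2(I_n)$, the $H^{-1}$-duality estimate for $\partial_t e^u_h$ needed to absorb $e^r_{nj}\partial_t e^u_h(t_{nj})$ in the $r$-equation, and the Schaefer-style avoidance of grid-ratio conditions via the cut-off functionals $\rho[\cdot]$. What remains here is purely organizational: to verify that at the fixed point the cut-offs are inactive (this is the content of \eqref{Linfty-phi}), that the resulting $(u_h,r_h)$ is in the required admissible set (via \eqref{Linfty-e}), and that the uniqueness argument reuses the same Gronwall skeleton with Lipschitz constants that are uniform in $\tau,h$ because both competing solutions are confined to a fixed $L^\infty$-neighborhood of $u_h^*$.
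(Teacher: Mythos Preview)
Your proposal is correct and follows essentially the same approach as the paper's proof: Schaefer's theorem for existence (via the boundedness of $\mathfrak{B}$ from \eqref{H1-stability}), the conclusions \eqref{Linfty-e}--\eqref{Linfty-phi} of Lemma~\ref{Lemma:M-theta} at the fixed point to verify that the cut-offs are inactive and the solution lies in $X_{\tau,h}^*\times Y_{\tau,h}^*$, and the estimate \eqref{H1-stability} together with Theorem~\ref{THM:consistency} for the error bound. For uniqueness the paper phrases the argument slightly more compactly---observing that the difference of two solutions is again a fixed point of (a variant of) $M$ with zero data, so \eqref{H1-stability} applies directly---whereas you propose to rerun the underlying energy estimates; these are the same argument in different words.
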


\begin{proof}
{\em Step 1: Existence.} 
By the definition of $\mathfrak{B}$, if $(\phi_h,\varphi_h)\in \mathfrak{B}$ and $(e^u_h,e^r_h)=M(\phi_h,\varphi_h)$ then $\phi_h=\theta e^u_h$ and $\varphi_h=\theta e^r_h$. Thus \eqref{H1-stability} implies
\begin{align}
&\|(\phi_h,\varphi_h)\|_{X_{\tau,h}\times Y_{\tau,h}} 
=\|\phi_h\|_{L^\infty(0,T;H^1)}
+\|\varphi_h\|_{L^\infty(0,T)} 
\le C , 
\end{align} 
which together with Schaefer's fixed point theorem imply the existence of a fixed point 
$(\phi_h,\varphi_h)$ for the mapping $M$ (corresponding to $\theta=1$), with 
$$
(e^u_h,e^r_h)=(\phi_h,\varphi_h) , 
\quad 
u^\phi =u_h^*+\phi_h 
\quad\mbox{and}\quad 
r^\phi=r_h^*+\varphi_h , 
$$
satisfying \eqref{GL-uh-map}--\eqref{GL-rh-map}, where we have used \eqref{Linfty-phi} in the expression \eqref{def-e-phi}. Consequently, $(e^u_h,e^r_h)$ is a solution of \eqref{GL-uh-rh-err-0} with $(u_h,r_h)=(u^\phi_h,r^\varphi_h)=(u_h^*+e^u_h,r_h^*+e^r_h)$. Hence, in view of the discussions in Remark \ref{Remark:existence}, $(u_h,r_h)$ is a solution of the numerical scheme \eqref{GL}, and \eqref{Linfty-e} implies $(u_h,r_h)$ is in the set $X_{\tau,h}^*\times Y_{\tau,h}^*$  defined in \eqref{def-X-star}--\eqref{def-Y-star}. This proves existence of a numerical solution in $X_{\tau,h}^*\times Y_{\tau,h}^*$. 

\medskip
{\em Step 2:  Uniqueness.} 
Suppose that $(u_h,r_h)$ and $(\widetilde u_h,\widetilde r_h)$ in $X_{\tau,h}^*\times Y_{\tau,h}^*$ are two pairs of numerical solutions, and set $e^u_h=u_h-\widetilde u_h$ and $e^r_h=r_h-\widetilde r_h$ (abusing the notation). 
Subtracting the corresponding equations satisfied by $(u_h,r_h)$ and $(\widetilde u_h,\widetilde r_h)$ shows that $(e^u_h,e^r_h)$ satisfies equations \eqref{GL-uh-rh-err-0} with $d_u^n=d_r^n=0$. In the meantime, the definition in \eqref{def-X-star}--\eqref{def-Y-star} implies 
\begin{align}\label{error-condition-fixed-point}
\|e^u_h(t_{nj})\|_{L^{\infty}\cap H^1} \le 1
\quad\mbox{and}\quad
|e^r_h(t_{nj})|\le 1 .
\end{align}
Accordingly, $(e^u_h,e^r_h)$ is a fixed point of the mapping $M$ (corresponding to $\theta=1$ in $\mathfrak{B}$) in the case $e^u_h(0)=e^r_h(0)=0$ and $d_u^n=d_r^n=0$. 
Hence, an application of \eqref{H1-stability} yields
\begin{align*}
&\|e^u_h\|_{L^\infty(0,T;H^1)} + \|e^r_h\|_{L^\infty(0,T)} \\ 
&\le C \Big[ \|e^u_h(0)\|_{H^1} +|e^r_h(0)|
+\max_{1\le n\le N}\max_{t \in I_n}\bigl( \|d_u^n\|_{H^1} + |P_\tau^n  d_r^n| \bigr) 
\Big]
=0. 
\end{align*}  
Thus, $(u_h,r_h)=(\widetilde u_h,\widetilde r_h)$ and the uniqueness of the numerical solution is proved. 

\medskip
{\em Step 3: Error estimate.} 
Since the error functions $e^u_h=u_h-u_h^*$ and $e^r_h=r_h-r_h^*$ satisfy \eqref{GL-uh-rh-err-0} and \eqref{error-condition-fixed-point}, it follows that $(e^u_h,e^r_h)$ is a fixed point of the mapping $M$ (corresponding to $\theta=1$ in $\mathfrak{B}$). Hence, an application of \eqref{H1-stability} yields 
\begin{align*}
&\|e^u_h\|_{L^\infty(0,T;H^1)} + \|e^r_h\|_{L^\infty(0,T)} \\
&\le C
\Big[ \|e^u_h(0)\|_{H^1} +|e^r_h(0)|
+\max_{1\le n\le N}\max_{t \in I_n}\bigl( \|d_u^n\|_{H^1} + |P_\tau^n  d_r^n| \bigr) 
\Big]
 .
\end{align*} 
Substituting the consistency error estimates from Theorem \ref{THM:consistency} into the above 
inequality yields the desired estimate \eqref{error_bound}. The proof is complete.
\end{proof}

\begin{remark}
{\upshape
For the periodic and Neumann boundary conditions, the mass and energy conservations in Theorem \ref{THM:conservation} and the error estimate in Theorem \ref{THM:main} can be proved similarly. 
}
\end{remark}

\section{Numerical experiments}\label{sec-5}
In this section, we present some one-dimensional numerical tests to  validate the theoretical 
results proved in Theorems \ref{THM:conservation} and \ref{THM:main} about the mass and energy conservations, and the convergence rates of the proposed method. All the computations are 
performed using the software package FEniCS (\url{https://fenicsproject.org}). 

We consider the cubic nonlinear Schr\"odinger equation 
\begin{align}\label{nls_sol}
\begin{aligned}
\i \partial_t u - \partial_{xx} u -2|u|^2 u 
&=0     &&\qquad\mbox{in}\,\,\, (-L,L)\times(0,T],\\
u|_{t=0} &=u_0 &&\qquad \mbox{in}\,\,\,(-L,L) ,\quad \mbox{with $L=20$},
\end{aligned}
\end{align}
subject to the periodic boundary condition. We choose $u_0 = \sech(x)\exp(2\i x)$ 
  so that the exact solution is given by 
\begin{align}\label{exact_sol}
u(x,t) = \sech(x+4t)\exp(\i(2x+3t)).
\end{align}
This example contains a soliton wave and is often used as a benchmark for meansuring the effectiveness of numerical methods for the NLS equation; see \cite{Taghizadeh2011Exact, Xu2005Local,lu2015mass}. 

\subsection{Convergence rates}
We solve problem \eqref{nls_sol} by the proposed method \eqref{GL} and compare the numerical solutions with the exact solution \eqref{exact_sol}. Newton's method is used to solve the nonlinear system. The iteration is stopped when the error is below $10^{-10}$. 

The time discretization errors are presented in Table \ref{table_time_errors_p3}, where we have used finite elements of degree $3$ with a sufficiently spatial mesh $h=2L/5000$ so that the error from spatial discretization is negligibly small in observing the temporal convergence rates.  
From Table \ref{table_time_errors_p3} we see that the error of time discretization 
is $O(\tau^{k+1})$, which is consistent with the result proved in Theorem \ref{THM:main}. 

The spatial discretization errors are presented in Table \ref{table_space_errors_gauss3}, where we have chosen $k=3$ with a sufficiently small time stepsize $\tau=1/1000$ so that the time discretization 
error is negligibly small compared to the spatial error.  
From Table \ref{table_space_errors_gauss3} we see that the spatial discretization errors are $O(h^{p})$ in the $H^1$ norm. This is also consistent with the result proved in Theorem \ref{THM:main}. 

\begin{table}[htp]\centering\small\footnotesize
\caption{Time discretization errors of the proposed method, with $h=\frac{2L}{5000}$ and $T = 1$. }
\setlength{\tabcolsep}{7mm}{
\begin{tabular}{crcc}
\toprule
$k$ &$\tau$&\multicolumn{2}{c}{$p = 3$}\\
\cmidrule(lr){3-4}
&&
$\|u(x, t) - u_h(x, t)\|_{L^\infty(0,T;H^1)}$ &order\\
\midrule
\multirow{5}{*}{$2$}    
&  1/60&     3.7964E--05&     --\\
&  1/70&     2.3429E--05& 3.1312\\
&  1/80&     1.5460E--05& 3.1132\\
&  1/90&     1.0733E--05& 3.0985\\
& 1/100&     7.7542E--06& 3.0853\\
\\
\multirow{5}{*}{$3$}  
&  1/20&     3.4019E--05&     --\\
&  1/25&     1.3821E--05& 4.0364\\
&  1/30&     6.6322E--06& 4.0275\\
&  1/35&     3.5689E--06& 4.0200\\
&  1/40&     2.0886E--06& 4.0123\\
\\
\multirow{5}{*}{$4$}  
&   1/8&     1.2291E--04&     --\\
&  1/12&     1.5120E--05& 5.1681\\
&  1/14&     6.8492E--06& 5.1369\\
&  1/16&     3.4634E--06& 5.1067\\
&  1/20&     1.1555E--06& 4.9192\\
\bottomrule
\end{tabular}}
\label{table_time_errors_p3}
\vspace{20pt}

\caption{Spatial discretization errors of the proposed method, with $\tau=\frac{1}{1000}$ and $T = 1$. }
\setlength{\tabcolsep}{7mm}{
\begin{tabular}{crccc}
\toprule
$p$&$M$
&\multicolumn{2}{c}{$k = 3$}\\
\cmidrule(lr){3-4}
&&$\|u(x, t) - u_h(x, t)\|_{L^\infty(0,T;H^1)}$ &order\\
\midrule
\multirow{5}{*}{$1$}  
&1400&       5.8670E--02&     --\\
&1600&       5.1134E--02& 1.0295\\
&1800&       4.5330E--02& 1.0229\\
&2000&       4.0719E--02& 1.0183\\
&2200&       3.6964E--02& 1.0149\\
\\
\multirow{5}{*}{$2$}  
& 240&      1.9306E--02&     --\\
& 260&      1.6438E--02& 2.0094\\
& 280&      1.4167E--02& 2.0062\\
& 300&      1.2338E--02& 2.0041\\
& 320&      1.0842E--02& 2.0027\\
\\
\multirow{5}{*}{$3$} 
&  90&       1.6147E--02&     --\\
& 100&       1.1661E--02& 3.0894\\
& 110&       8.7112E--03& 3.0599\\
& 120&       6.6844E--03& 3.0436\\
& 130&       5.2435E--03& 3.0334\\
\bottomrule
\end{tabular}}
\label{table_space_errors_gauss3}
\end{table}


\subsection{Mass and energy conservations}


We denote the mass and SAV energy of a numerical solution by 
\begin{align}\label{dis-SAV-mass}
M_h(t)=\int_\Omega |u_h(t)|^2\d x
\quad\mbox{and}\quad 
E_h(t)=\frac12\int_\Omega |\nabla u_h(t)|^2 \d x - r_h(t)^2 ,
\end{align}
respectively. 
The evolution of mass and SAV energy of the numerical solutions is presented in Figure \ref{fig_mass_energy} with $\tau=0.2$ and $h=0.2$. It is shown that 
$$
{\rm mass} = 2 + O(10^{-12})
\quad\mbox{and}\quad 
{\rm SAV\,\, energy} =  -7.33358048516+  O(10^{-12}) ,
$$ 
which are much smaller than the error of the numerical solutions, as shown in Figure \ref{fig_mass_energy_error}. This shows the effectiveness of the proposed method in preserving mass and energy (independent of the error of numerical solutions). The number of iterations at each time level is presented in Figure \ref{fig_iter_num} to show the effectiveness of the Newton's method.

\begin{figure}[htp]
\centerline{
\includegraphics[width=2.6in]{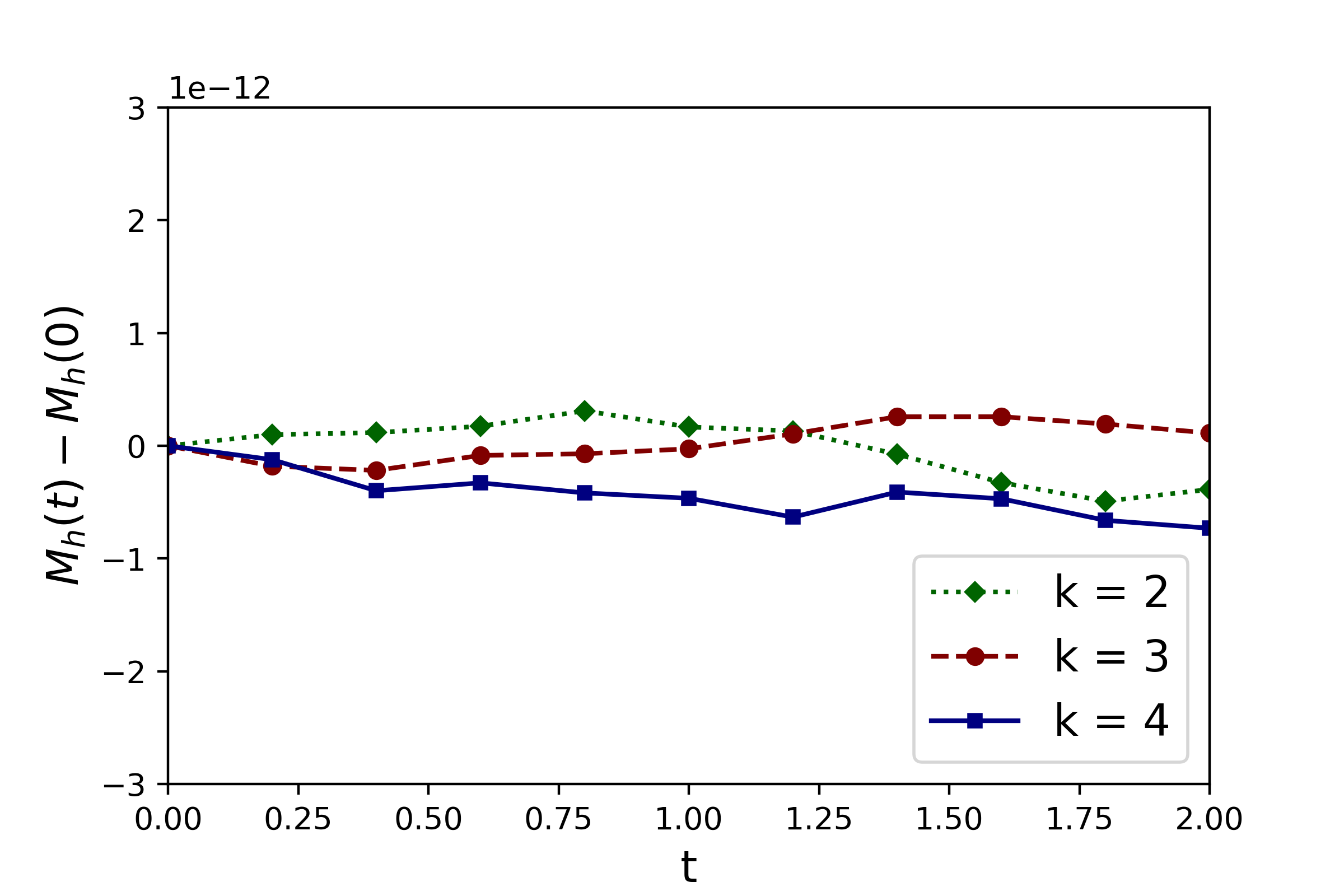}\,
\includegraphics[width=2.6in]{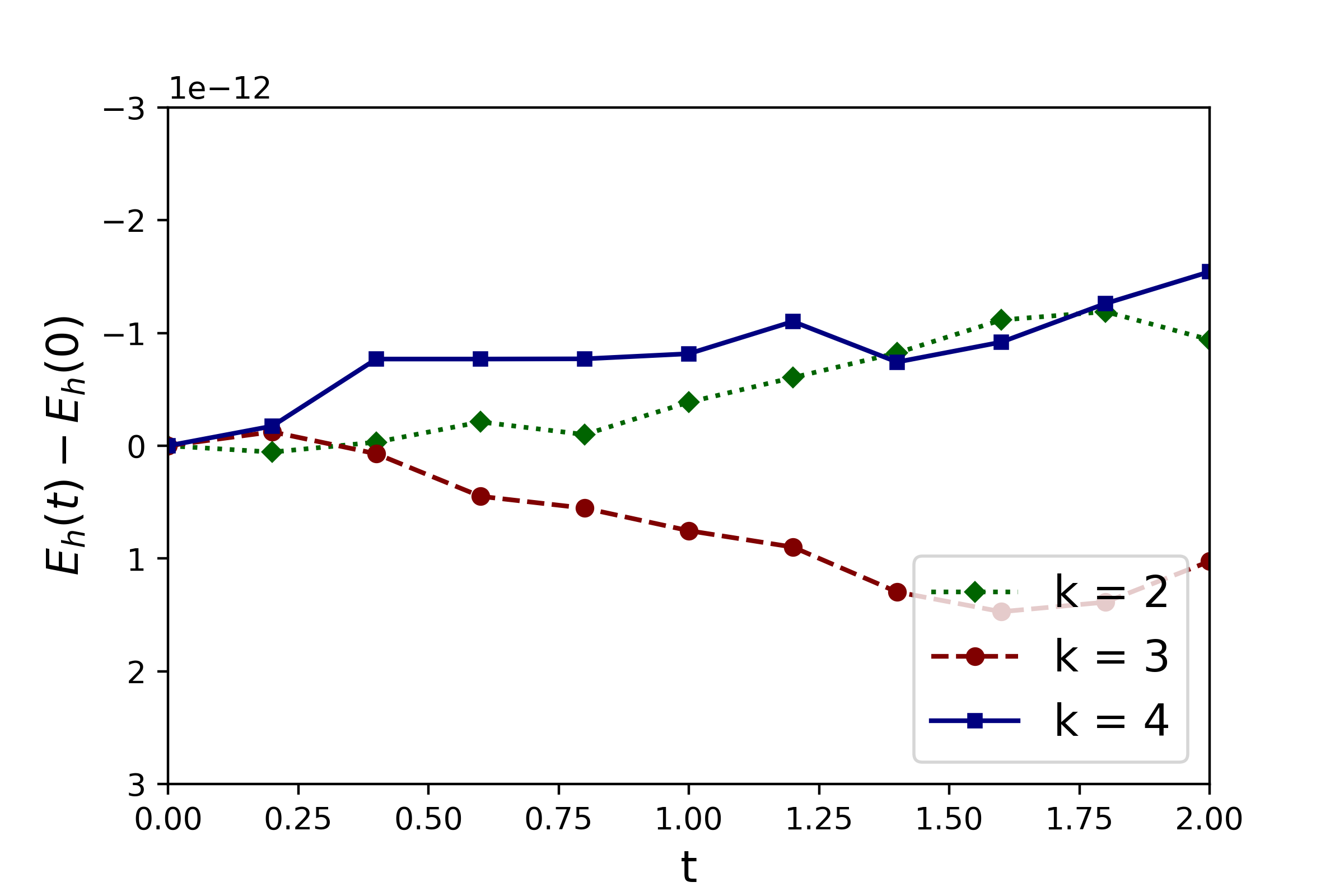}
}
\vspace{-8pt}
\caption{Evolution of mass $M_h(t)-M_h(0)$ and SAV energy $E_h(t)-E_h(0)$, with $p=3$ and $\tau=h=0.2$.}
\label{fig_mass_energy}
\smallskip 
\centerline{
\includegraphics[width=2.6in]{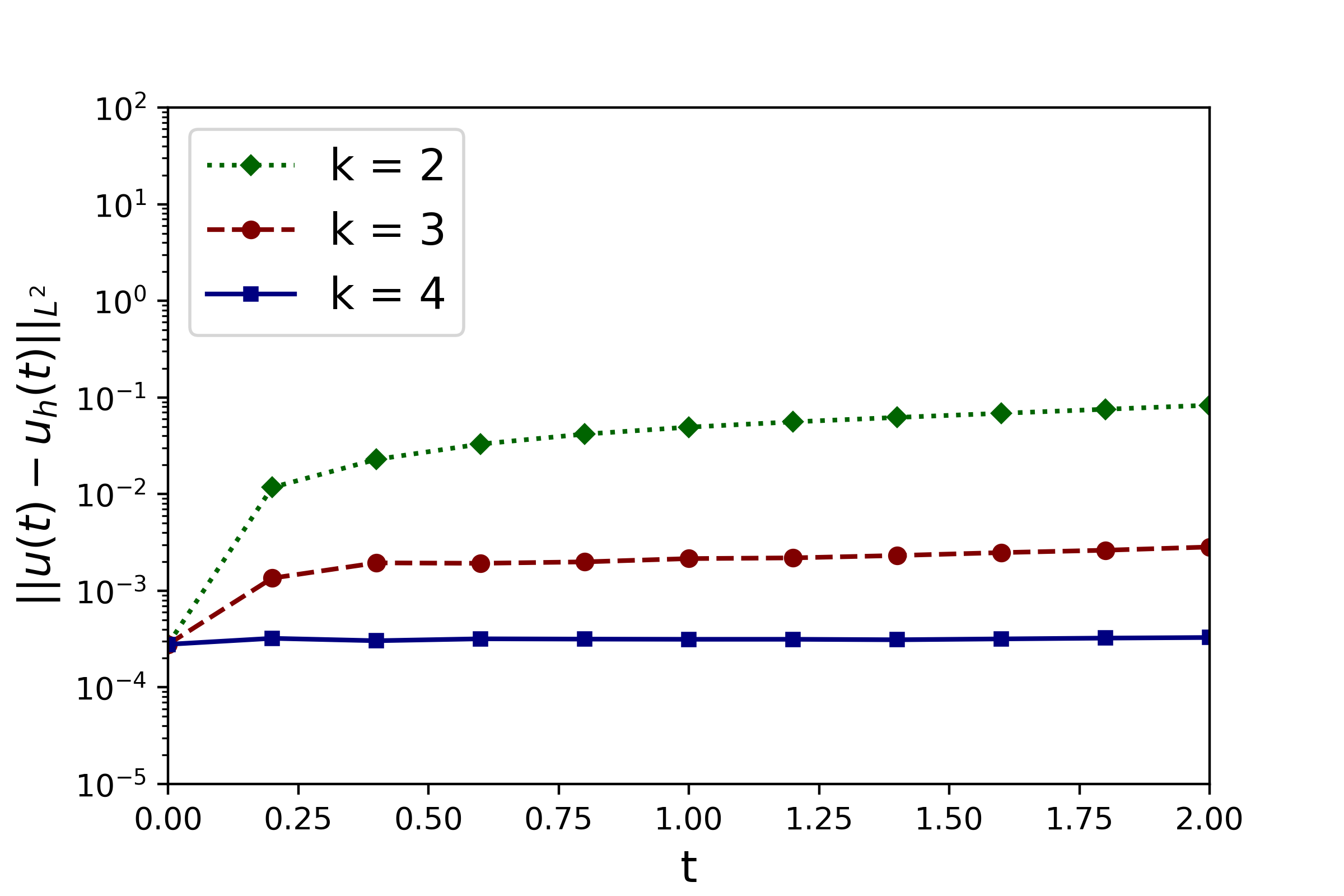}\,
\includegraphics[width=2.6in]{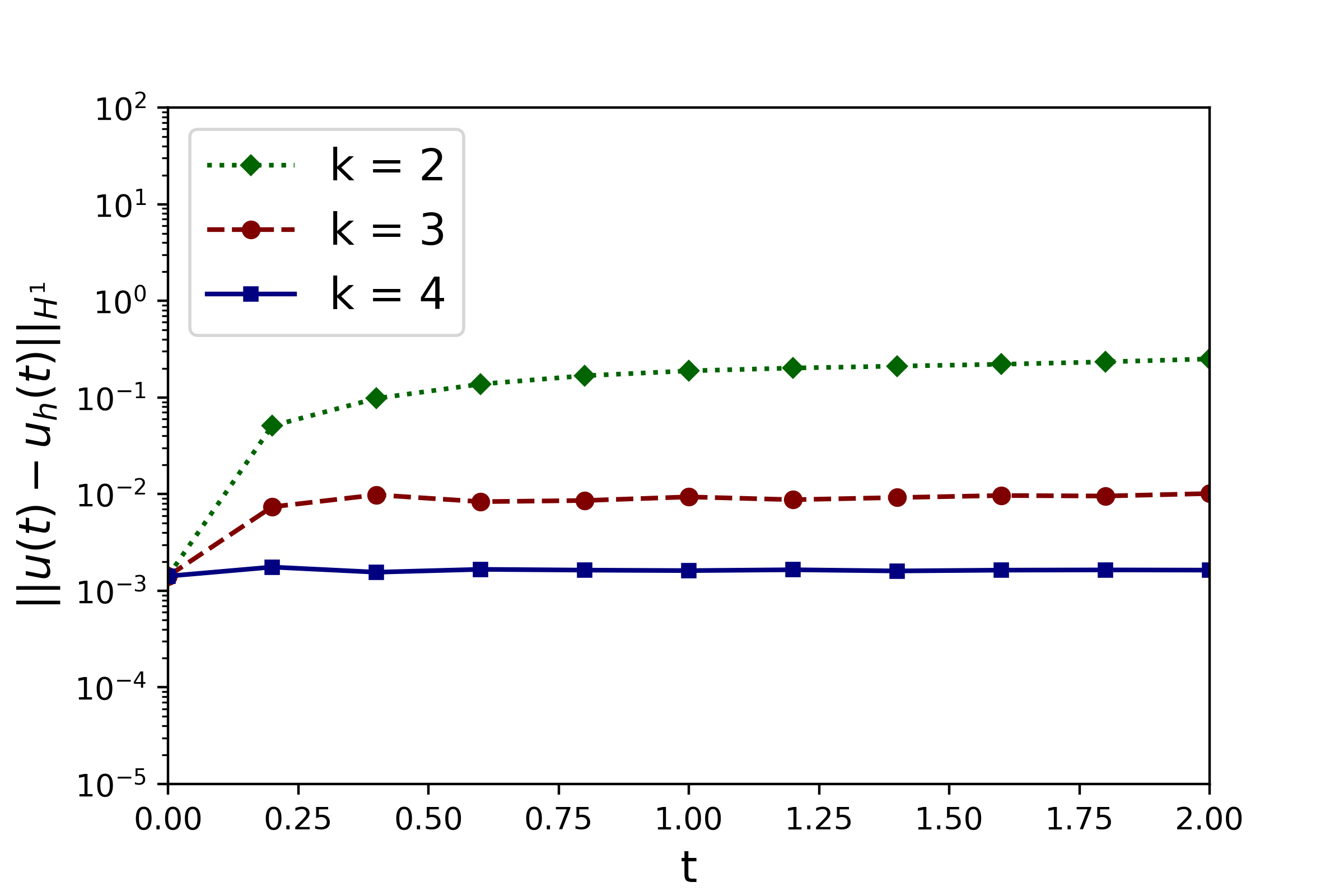}
}
\vspace{-8pt}
\caption{Evolution of error of the numerical solution, with $p=3$ and $\tau=h=0.2$.}
\label{fig_mass_energy_error}
\centering
\includegraphics[width=2.6in]{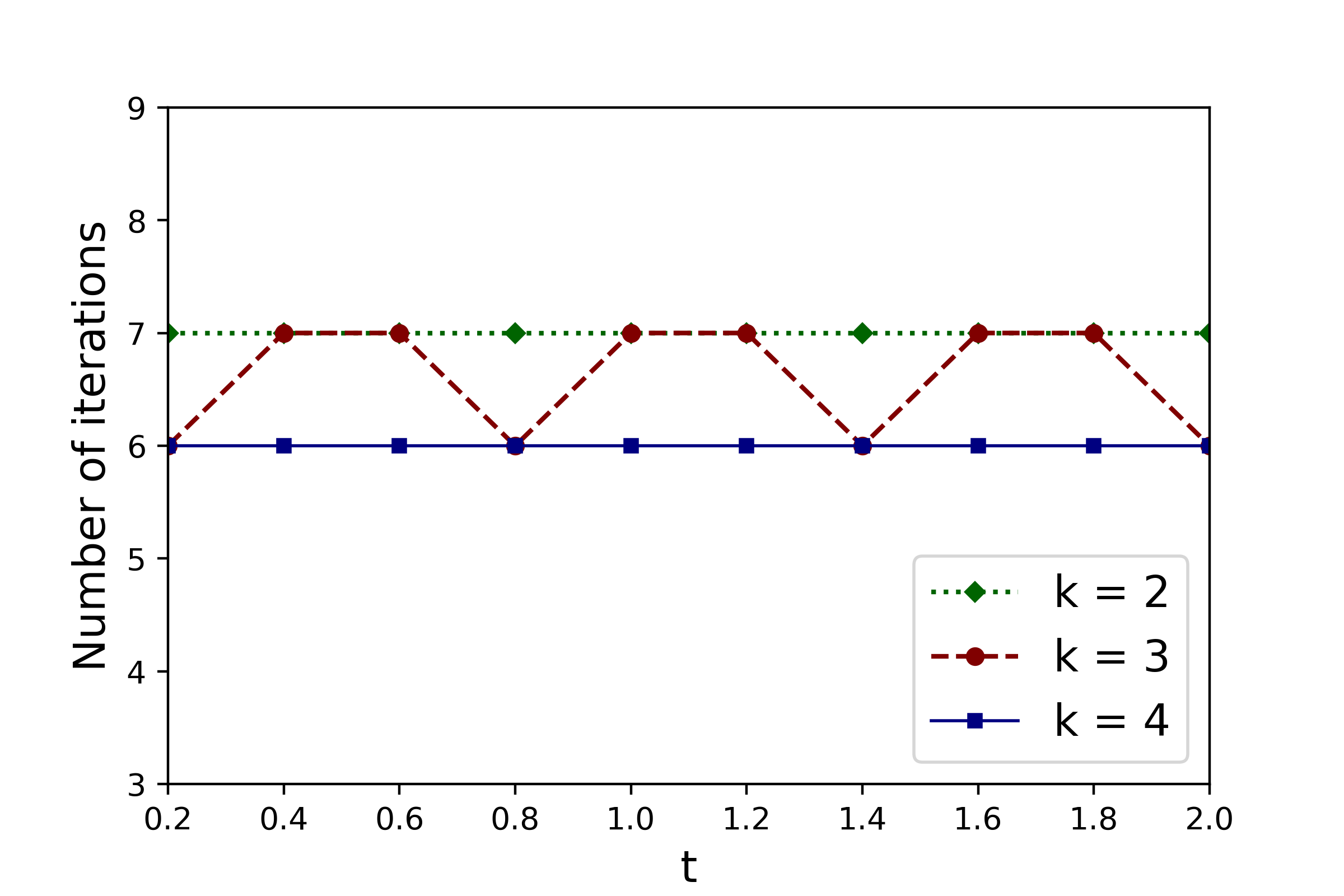}
\vspace{-8pt}
\caption{ Number of iterations at each time level, with $p=3$ and $\tau=h=0.2$.}
\label{fig_iter_num}
%
\centering
\includegraphics[width=2.6in]{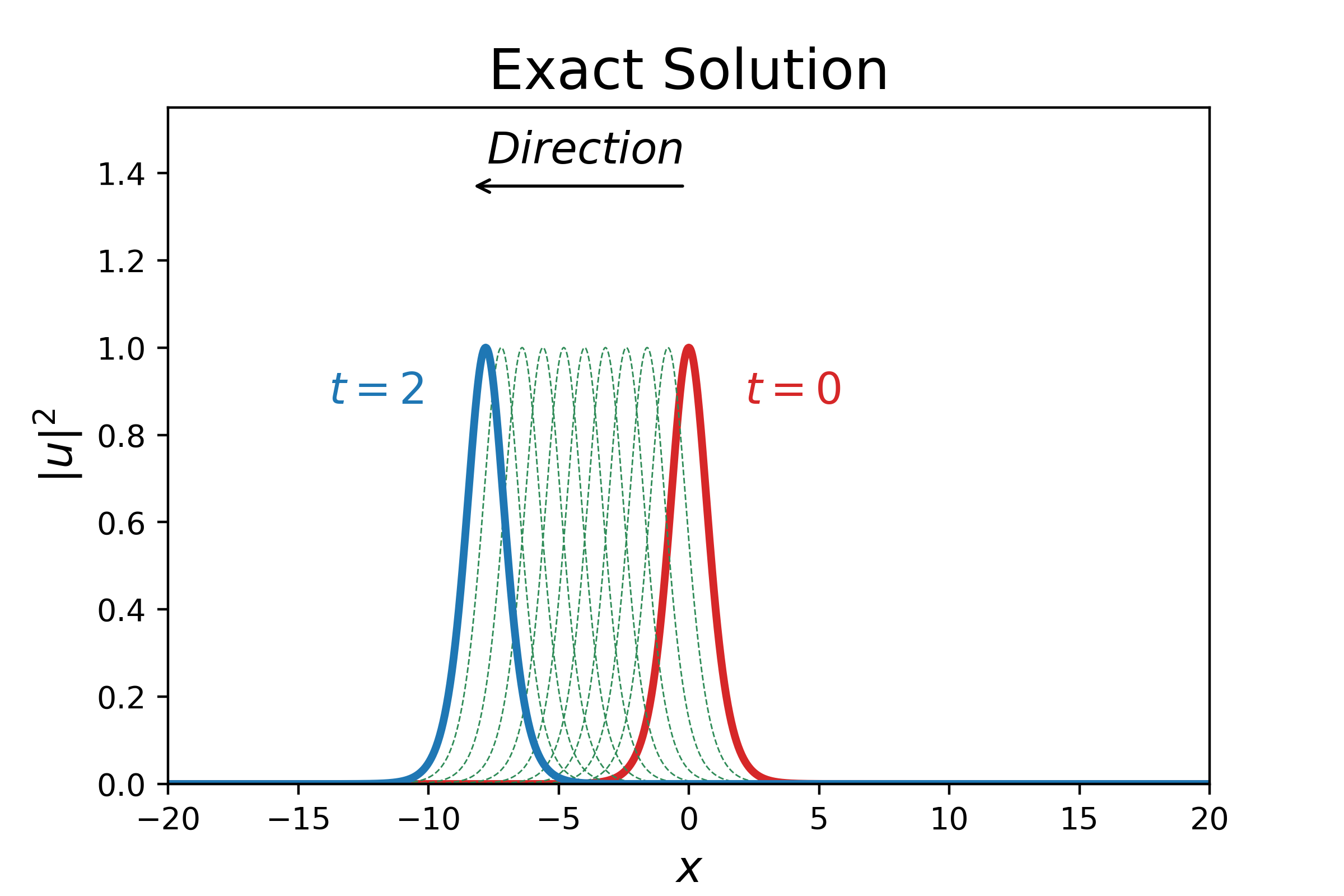}
\caption{Soliton propagation when $t\in [0, 2]$: graph of the exact solution $|u(\cdot,t)|$.} 
\label{fig_solotion_exact}
\end{figure}

	

\subsection{Comparison of different methods in preserving the shape of a soliton}

The graph of $|u(x,t)|$ is a soliton propagating towards left. Its shape remains unchanged for all
$t\geq 0$ as shown in Figure~\ref{fig_solotion_exact}. 
The graphs of numerical solutions given by several different numerical methods using the same mesh sizes are presented in Figures \ref{fig_solotion_methods2} and \ref{fig_solotion_methods}. All the methods preserve mass and energy conservations. The numerical results show the effectiveness of the proposed method in preserving the shape of the soliton. 
\begin{figure}[htp]
\centerline{
\subfigure[]{
\label{fig_solotion_methods2:a}
\includegraphics[width=2.6in]{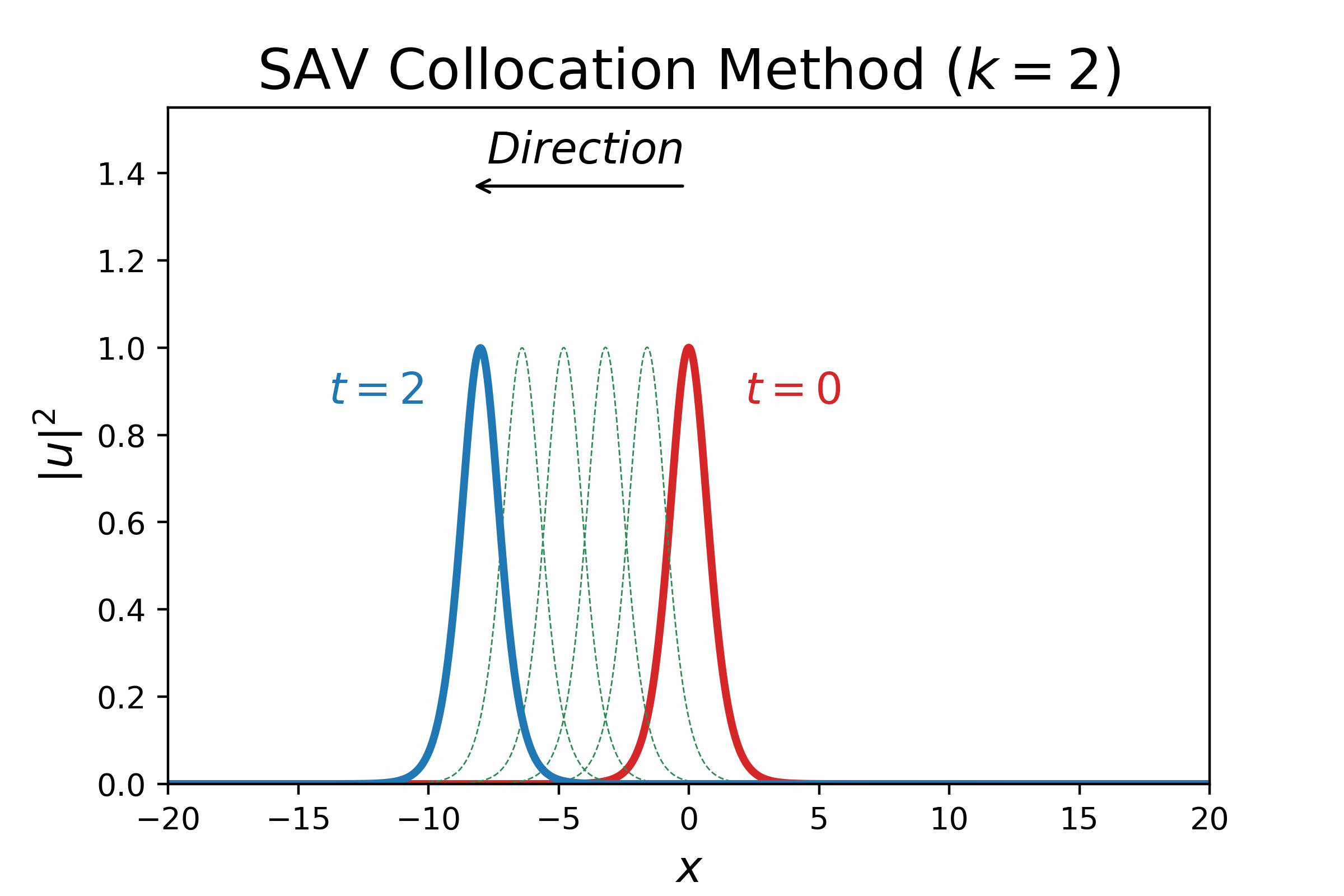}}
\subfigure[]{
\label{fig_solotion_methods2:b}
\includegraphics[width=2.6in]{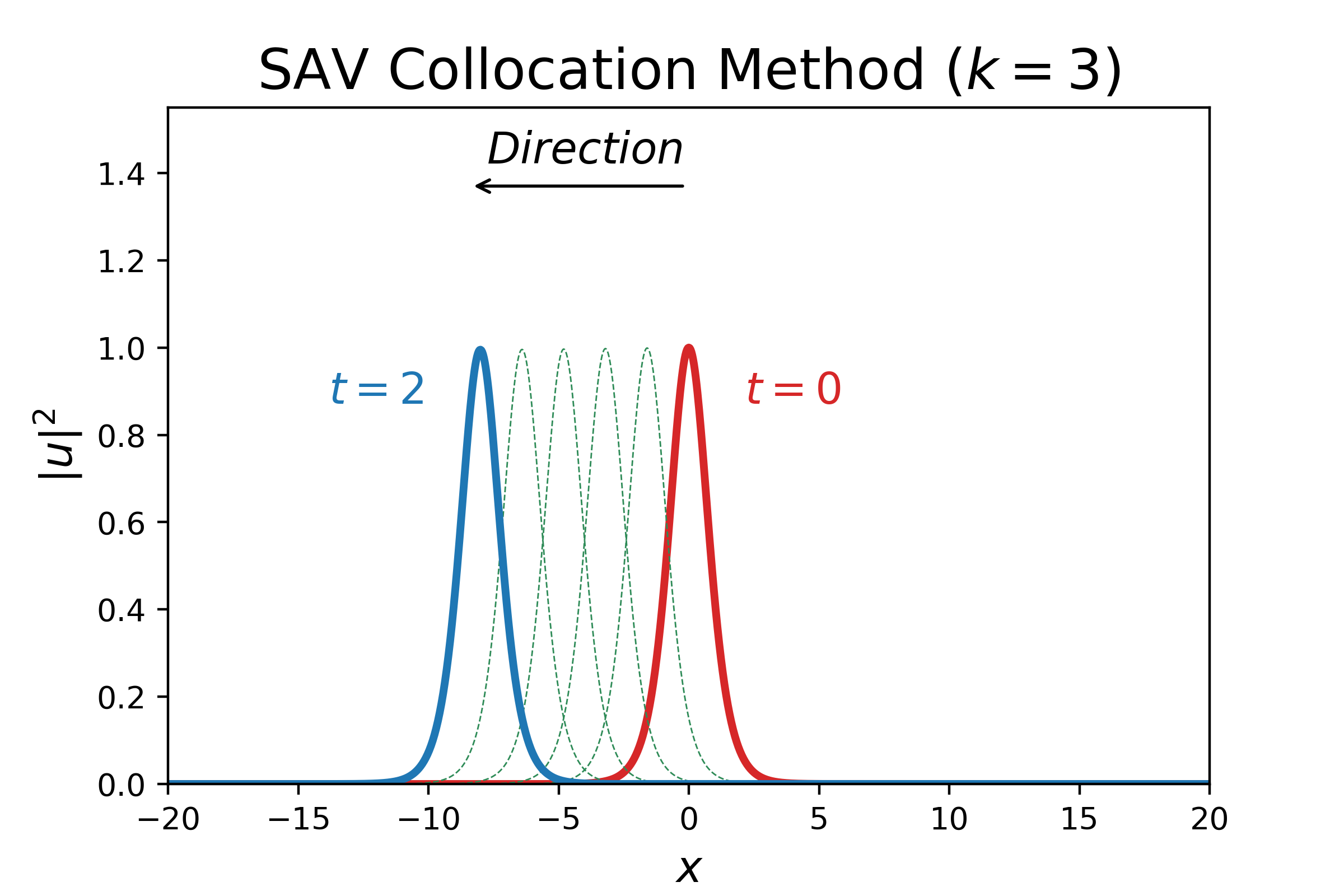}}
}
\centerline{
\subfigure[]{
\label{fig_solotion_methods2:c}
\includegraphics[width=2.6in]{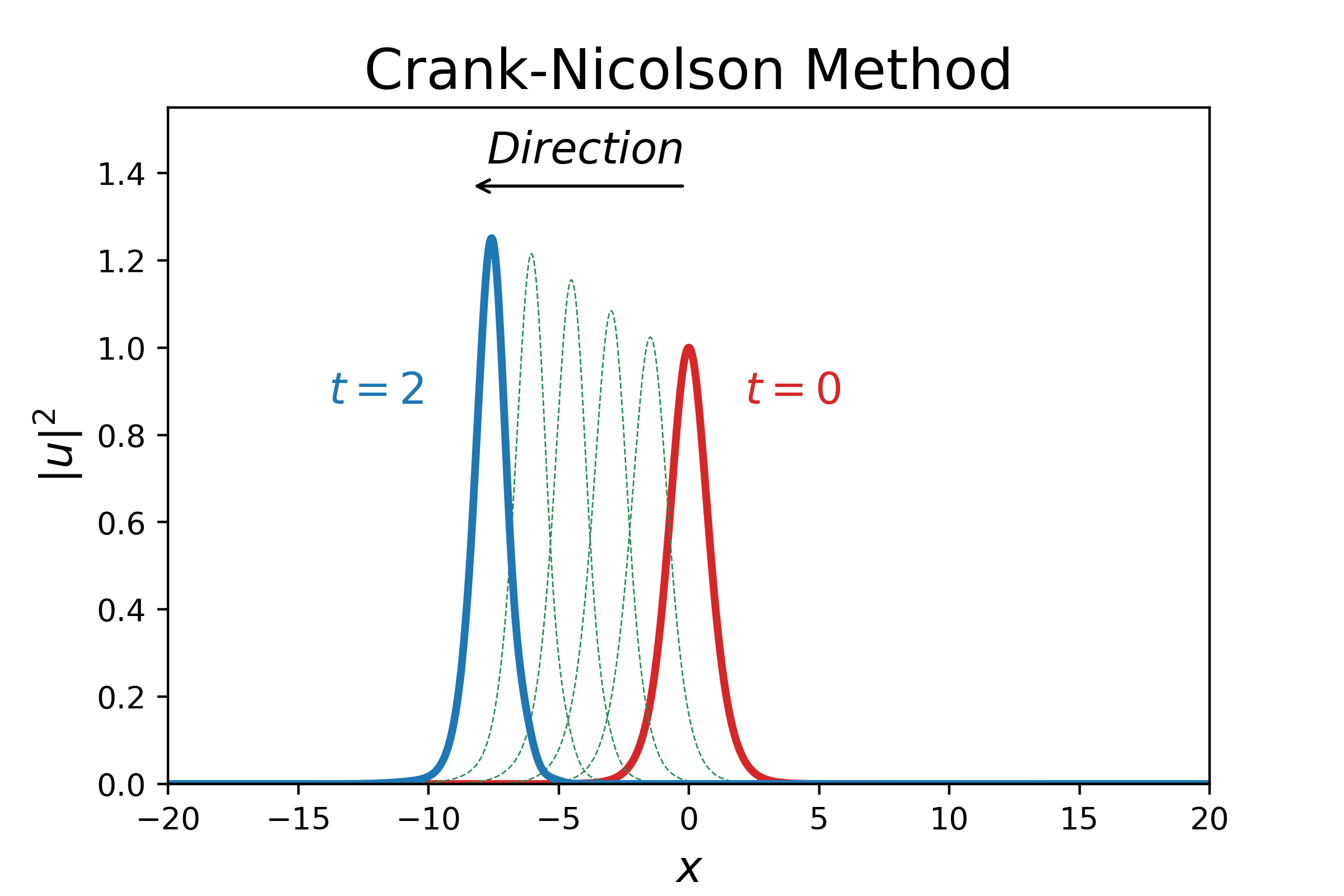}}
\subfigure[]{
\label{fig_solotion_methods2:d}
\includegraphics[width=2.6in]{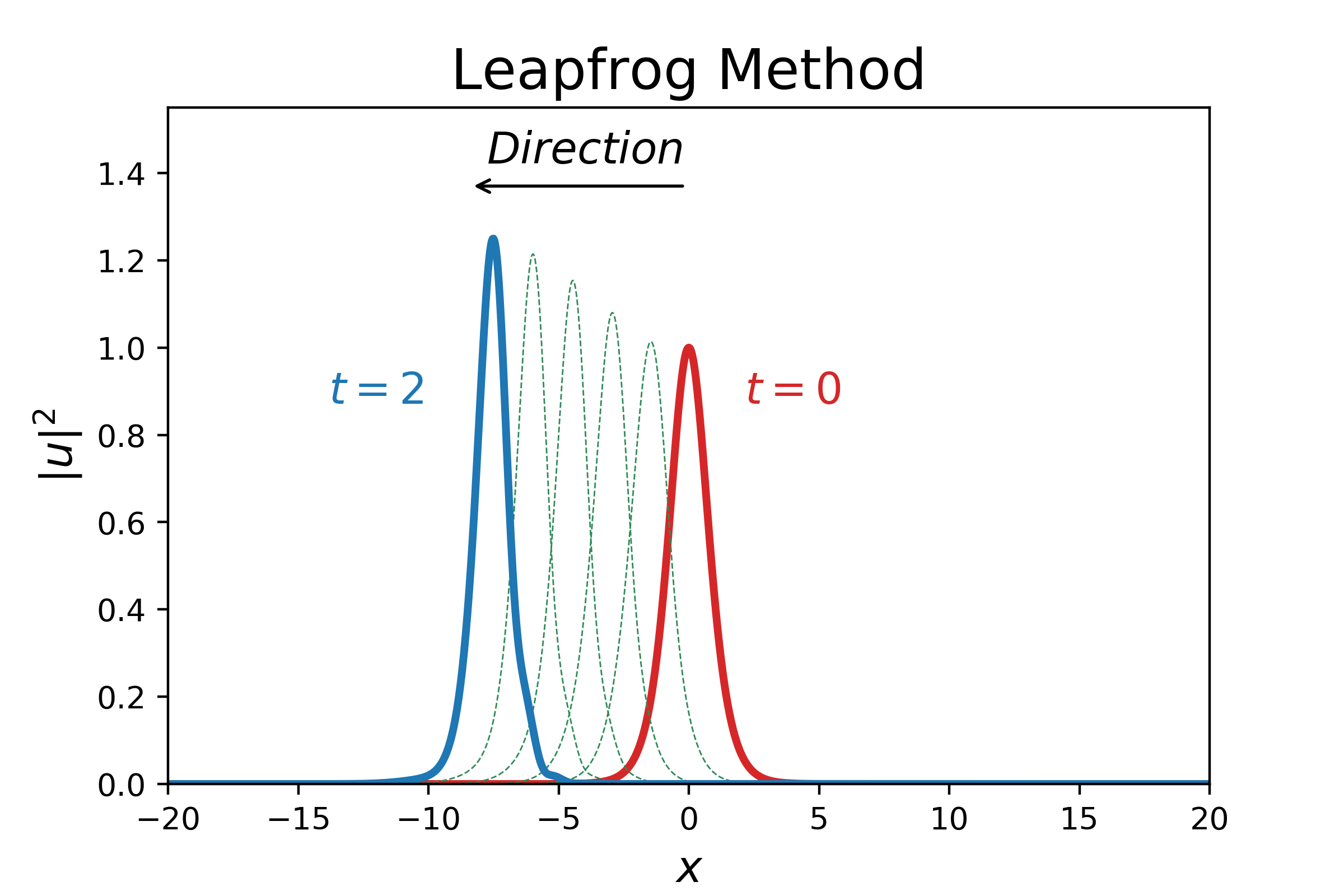}}
}
\vspace{-10pt}
\caption{Soliton propagation when $t\in [0, 2]$: numerical solutions with $p=1$, $M = 1200$ and $\Delta t = 0.1$. }
\label{fig_solotion_methods2}
%
\centerline{
\subfigure[]{
\label{fig_solotion_methods:a}
\includegraphics[width=2.6in]{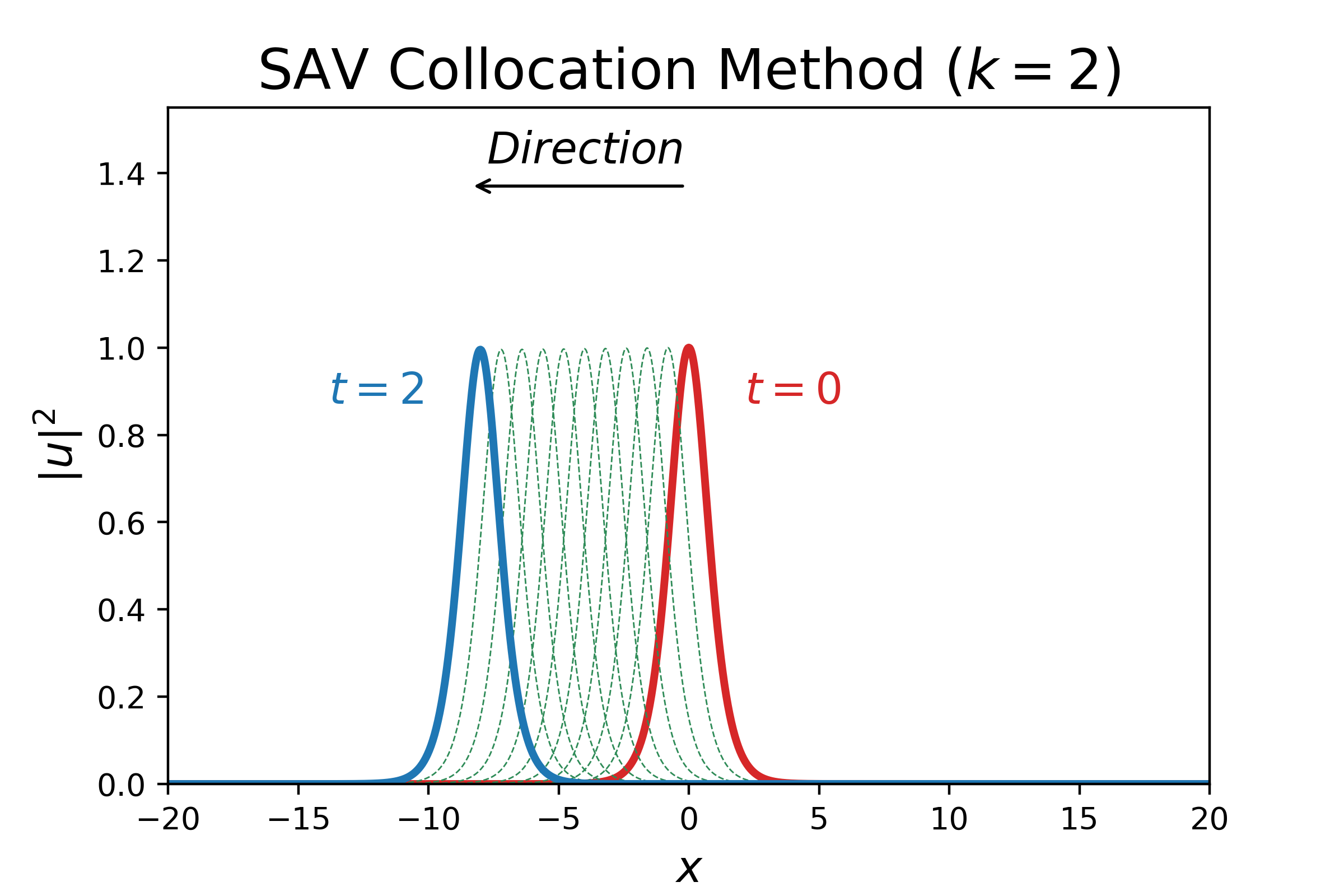}}
\subfigure[]{
\label{fig_solotion_methods:b}
\includegraphics[width=2.6in]{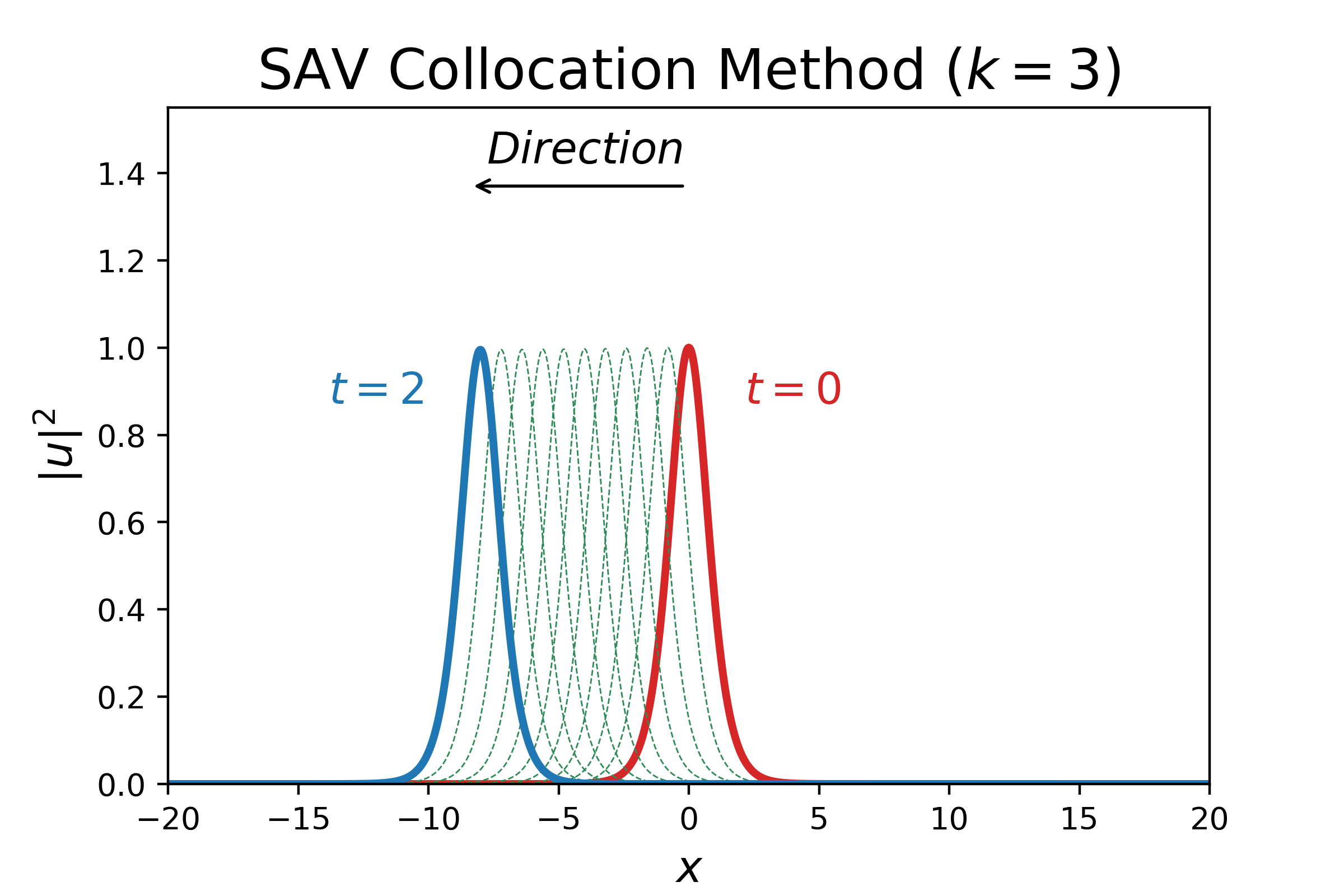}}
}
\centerline{
\subfigure[]{
\label{fig_solotion_methods:c}
\includegraphics[width=2.6in]{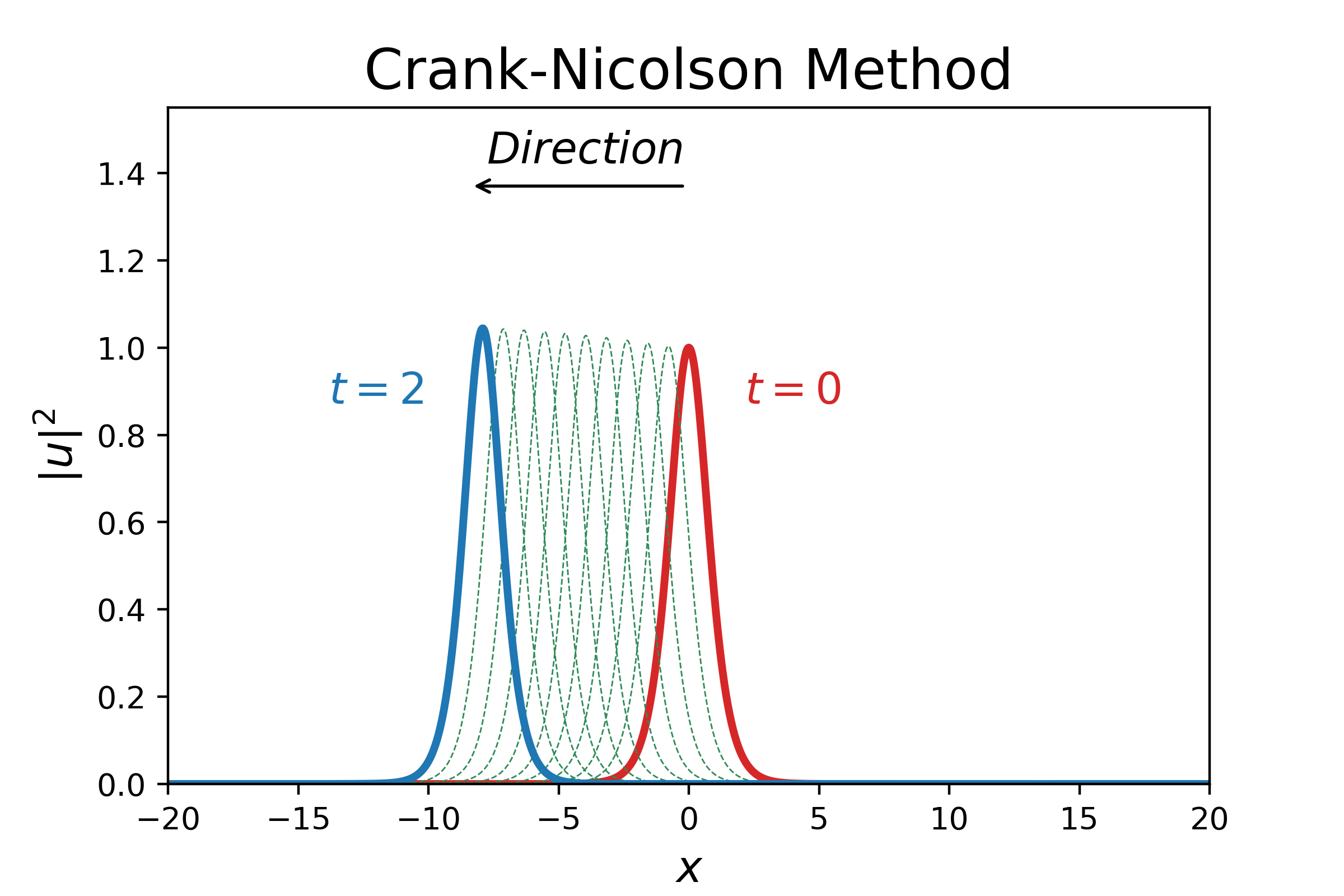}}
\subfigure[]{
\label{fig_solotion_methods:d}
\includegraphics[width=2.6in]{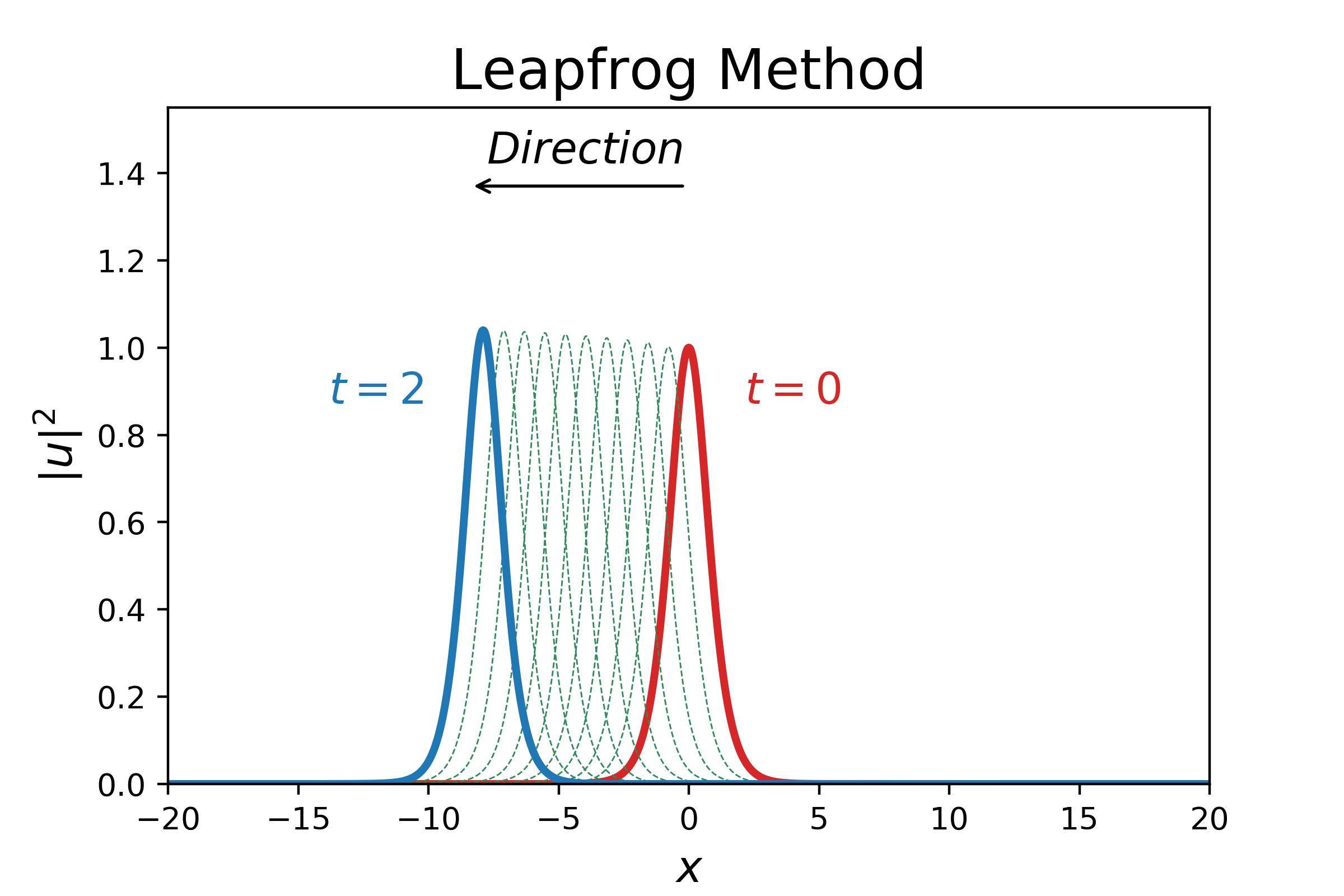}}
}
\vspace{-10pt}
\caption{Soliton propagation when $t\in [0, 2]$: numerical solutions with $p=1$, $M = 1200$ and $\Delta t = 0.05$.}
\label{fig_solotion_methods}
\end{figure}

\subsection{Capability of solving focusing nonlinearity}
We consider the cubic nonlinear Schr\"odinger equation 
\begin{align}\label{nls_sol_2d}
\begin{aligned}
\i \partial_t u - \partial_{xx} u - \partial_{yy} u + 2|u|^2 u 
&=0     &&\qquad\mbox{in}\,\,\, \Omega\times(0,T],\\
u|_{t=0} &=u_0 &&\qquad \mbox{in}\,\,\,\Omega,
\end{aligned}
\end{align}
in two-dimensional space $\Omega =[0, 1] \times [0, 1]$ subject to the periodic boundary condition. We choose $u_0 = \exp(2 \pi\i (x+y))$ 
so that the exact solution is given by 
\begin{align}\label{exact_sol_2d}
u(x,t) = \exp(\i (2 \pi  x  + 2 \pi  y + (2 + 8\pi^2)t)),
\end{align}
which admits a progressive plane wave solution; see \cite{Xu2005Local}. 

We solve problem \eqref{nls_sol_2d} by the proposed method \eqref{GL} and compare the numerical solutions with the exact solution \eqref{exact_sol_2d}. Newton's method is used to solve the nonlinear system. The iteration is stopped when the error is below $10^{-10}$. 

The time discretization errors are presented in Table \ref{table_time_errors_p3_2d}, where we have used finite elements of degree $3$ with a sufficiently spatial mesh $h=1/80$ so that the error from spatial discretization is negligibly small in observing the temporal convergence rates.  
From Table \ref{table_time_errors_p3_2d} we see that the error of time discretization 
is $O(\tau^{k+1})$, which is consistent with the result proved in Theorem \ref{THM:main}. 

The spatial discretization errors are presented in Table \ref{table_space_errors_gauss3_2d}, where we have chosen $k=3$ with a sufficiently small time stepsize $\tau=1/1000$ so that the time discretization 
error is negligibly small compared to the spatial error.  
From Table \ref{table_space_errors_gauss3_2d} we see that the spatial discretization errors are $O(h^{p})$ in the $H^1$ norm. This is also consistent with the result proved in Theorem \ref{THM:main}. 

\begin{table}[htp]\centering\small\footnotesize
\caption{Time discretization errors of the proposed method, with $h=\frac{1}{80}$ and $T = 0.1$. }
\setlength{\tabcolsep}{7mm}{
\begin{tabular}{crcc}
\toprule
$k$ &$\tau$&\multicolumn{2}{c}{$p = 3$}\\
\cmidrule(lr){3-4}
&&
$\|u(x, t) - u_h(x, t)\|_{L^\infty(0,T;H^1)}$ &order\\
\midrule
\multirow{5}{*}{$2$}    
&  1/460&     5.0023E--04&     --\\
&  1/480&     4.3780E--04& 3.1321\\
&  1/500&     3.8572E--04& 3.1027\\
&  1/520&     3.4198E--04& 3.0686\\
&  1/540&     3.0504E--04& 3.0290\\
\\
\multirow{5}{*}{$3$}  
&   1/60&    1.6206E--02&     --\\
&   1/80&    4.9792E--03& 4.1022\\
&  1/100&    2.0173E--03& 4.0490\\
&  1/120&    9.6960E--04& 4.0183\\
&  1/140&    5.2530E--04& 3.9761\\
\\
\multirow{5}{*}{$4$} 
&   1/30&    3.6941E--02&     --\\
&   1/40&    8.0993E--03& 5.2750\\
&   1/50&    2.5534E--03& 5.1731\\
&   1/60&    1.0078E--03& 5.0989\\
&   1/70&    4.6554E--04& 5.0104\\
\bottomrule
\end{tabular}}
\label{table_time_errors_p3_2d}
\vspace{20pt}

\caption{Spatial discretization errors of the proposed method, with $\tau=\frac{1}{1000}$ and $T = 0.1$. }
\setlength{\tabcolsep}{7mm}{
\begin{tabular}{crccc}
\toprule
$p$&$h$
&\multicolumn{2}{c}{$k = 3$}\\
\cmidrule(lr){3-4}
&&$\|u(x, t) - u_h(x, t)\|_{L^\infty(0,T;H^1)}$ &order\\
\midrule
\multirow{5}{*}{$1$}  
&  1/70&       5.6297E--01&     --\\
&  1/80&       4.8304E--01& 1.1466\\
&  1/90&       4.2346E--01& 1.1178\\
& 1/100&       3.7726E--01& 1.0964\\
& 1/110&       3.4035E--01& 1.0803\\
\\
\multirow{5}{*}{$2$}  
&  1/10&       4.9467E--01&     --\\
&  1/15&       2.0992E--01& 2.1141\\
&  1/20&       1.1748E--01& 2.0178\\
&  1/25&       7.5177E--02& 2.0005\\
&  1/30&       5.2233E--02& 1.9972\\
\\
\multirow{5}{*}{$3$} 
&  1/12&       2.1955E--02&     --\\
&  1/14&       1.3738E--02& 3.0412\\
&  1/16&       9.1747E--03& 3.0236\\
&  1/18&       6.4327E--03& 3.0144\\
&  1/20&       4.6849E--03& 3.0092\\
\bottomrule
\end{tabular}}
\label{table_space_errors_gauss3_2d}
\end{table}

The evolution of mass and SAV energy of the numerical solutions is presented in Figure \ref{fig_mass_energy_2d} with $\tau=0.2$ and $h=0.2$. It is shown that 
$$
{\rm mass} = 1.004814962453 + O(10^{-12})
\quad\mbox{and}\quad 
{\rm SAV\,\, energy} =  80.45628698537 +  O(10^{-11}) ,
$$ 
which are much smaller than the error of the numerical solutions, as shown in Figure \ref{fig_mass_energy_error_2d}. This shows the effectiveness of the proposed method in preserving mass and energy (independent of the error of numerical solutions). The number of iterations at each time level is presented in Figure \ref{fig_iter_num_2d} to show the effectiveness of the Newton's method.

\begin{figure}[htp]
\centerline{
\includegraphics[width=2.6in]{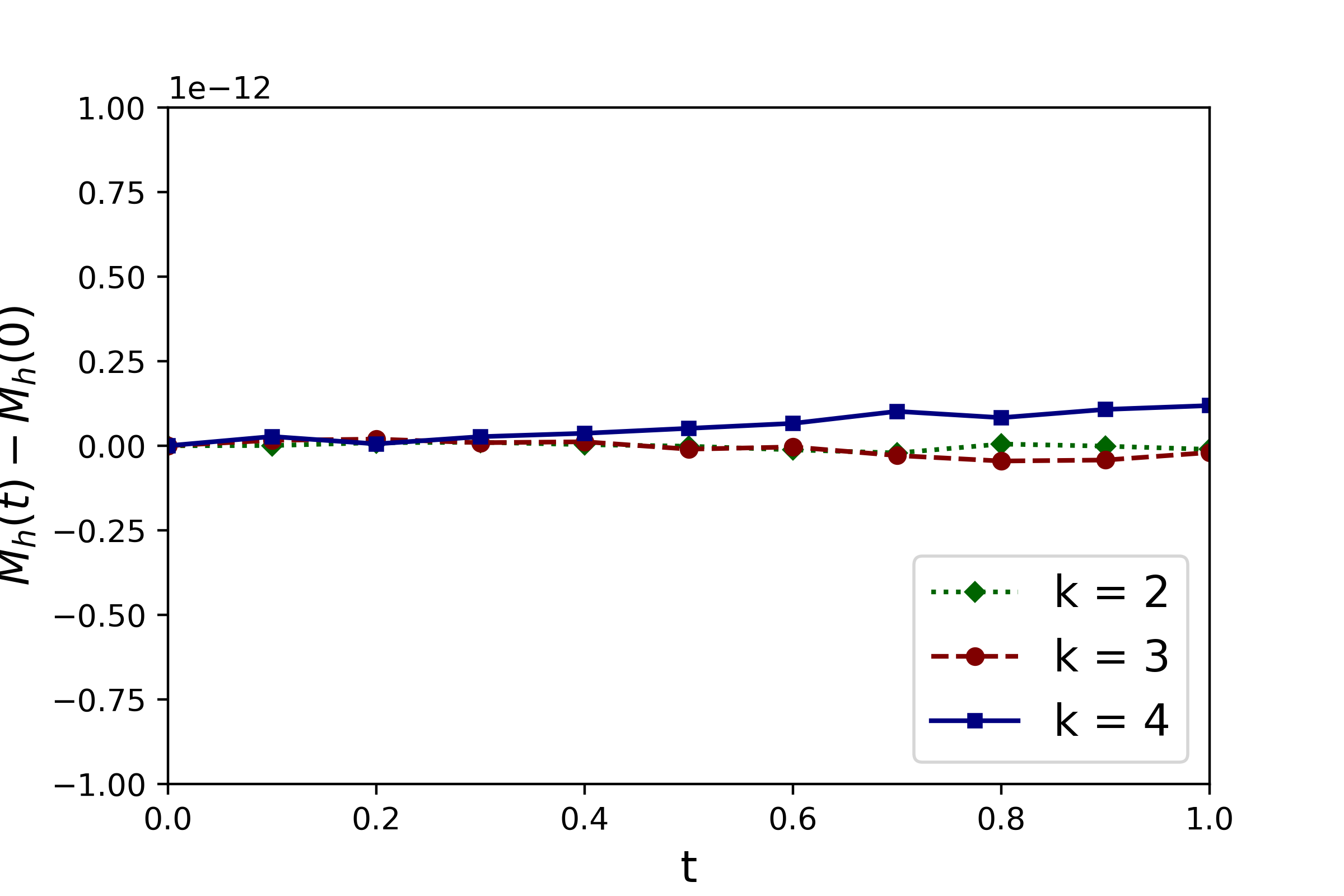}\,
\includegraphics[width=2.6in]{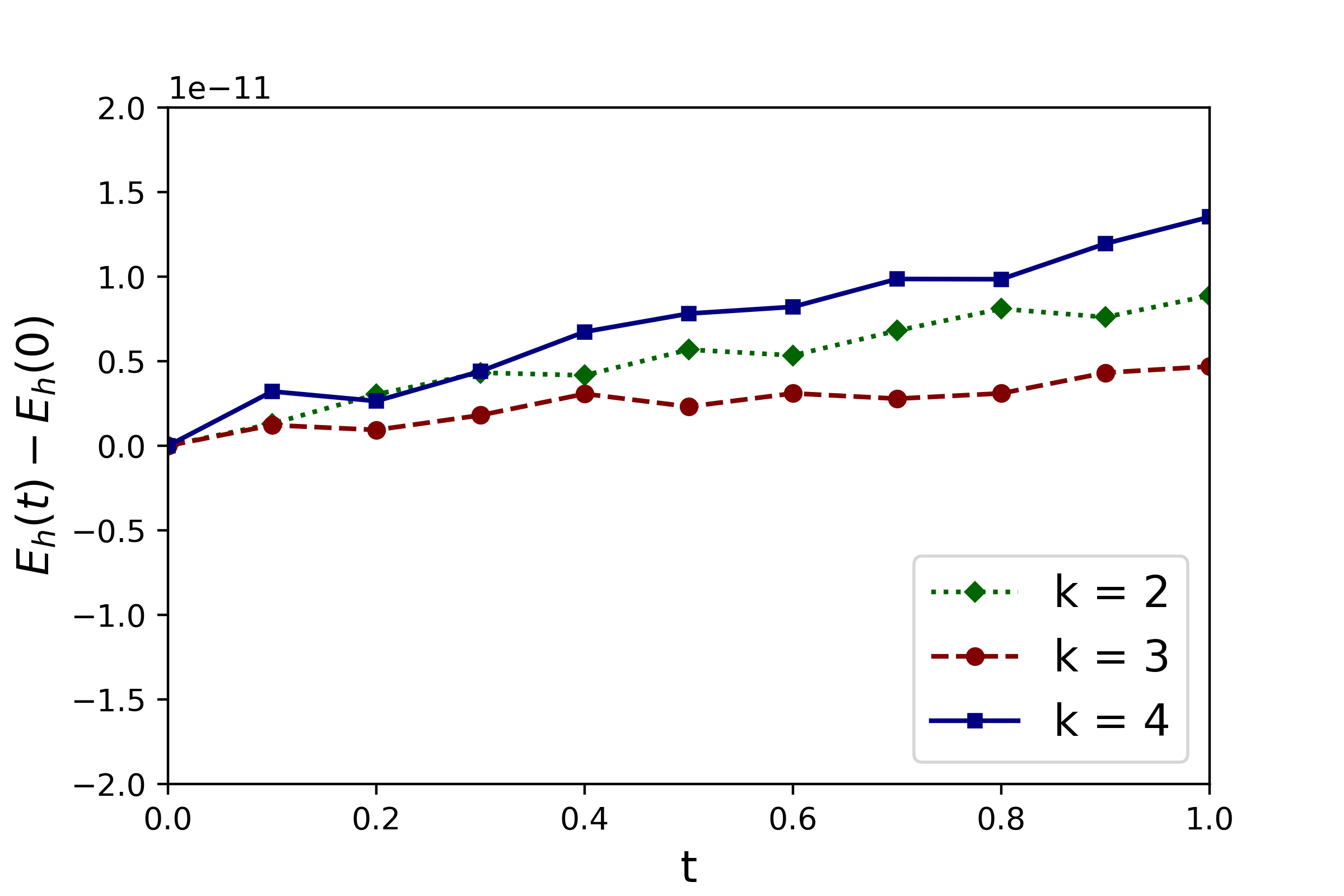}
}
\vspace{-8pt}
\caption{Evolution of mass $M_h(t)-M_h(0)$ and SAV energy $E_h(t)-E_h(0)$, with $p=3$ and $\tau=h=0.2$.}
\label{fig_mass_energy_2d}
\smallskip 
\centerline{
\includegraphics[width=2.6in]{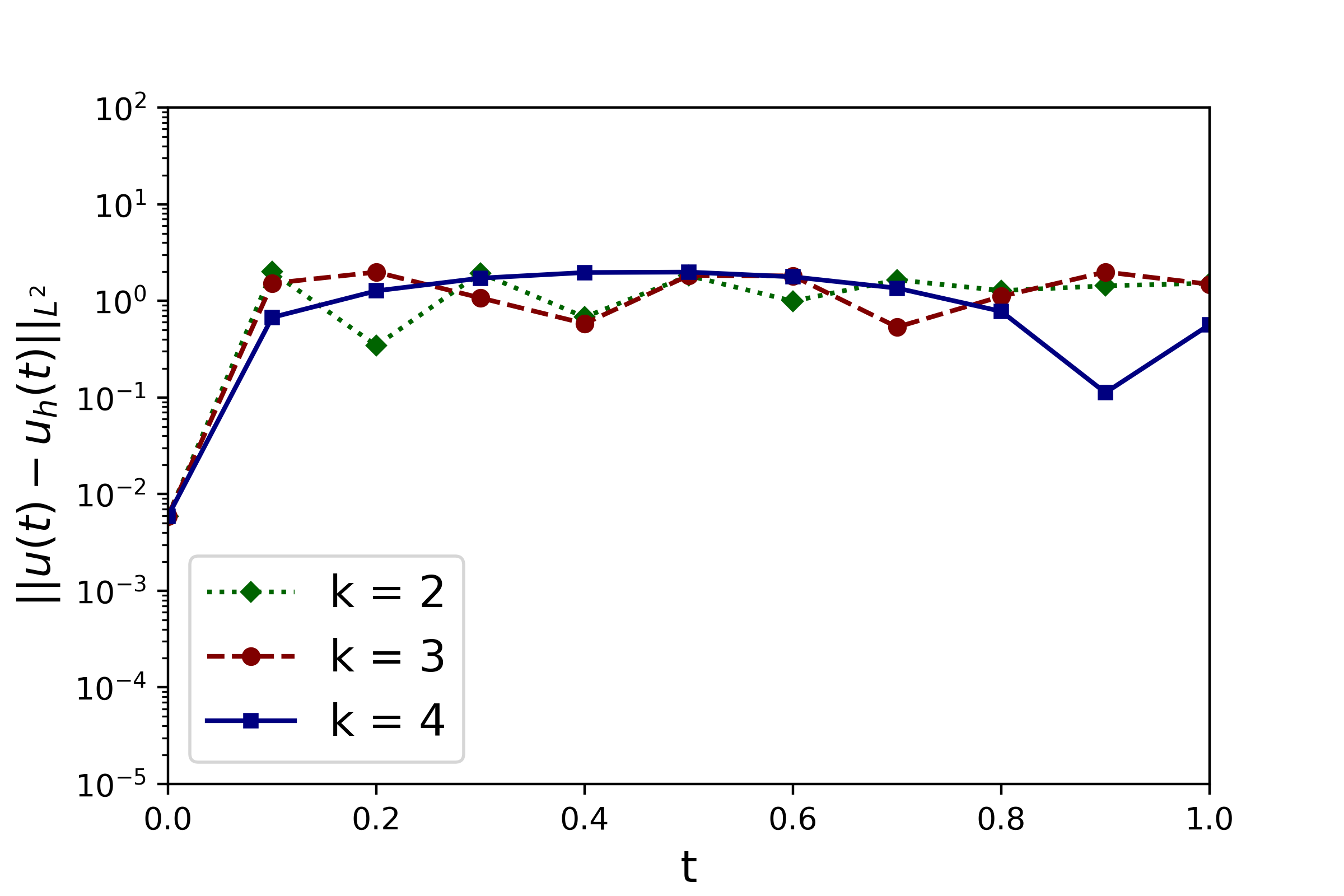}\,
\includegraphics[width=2.6in]{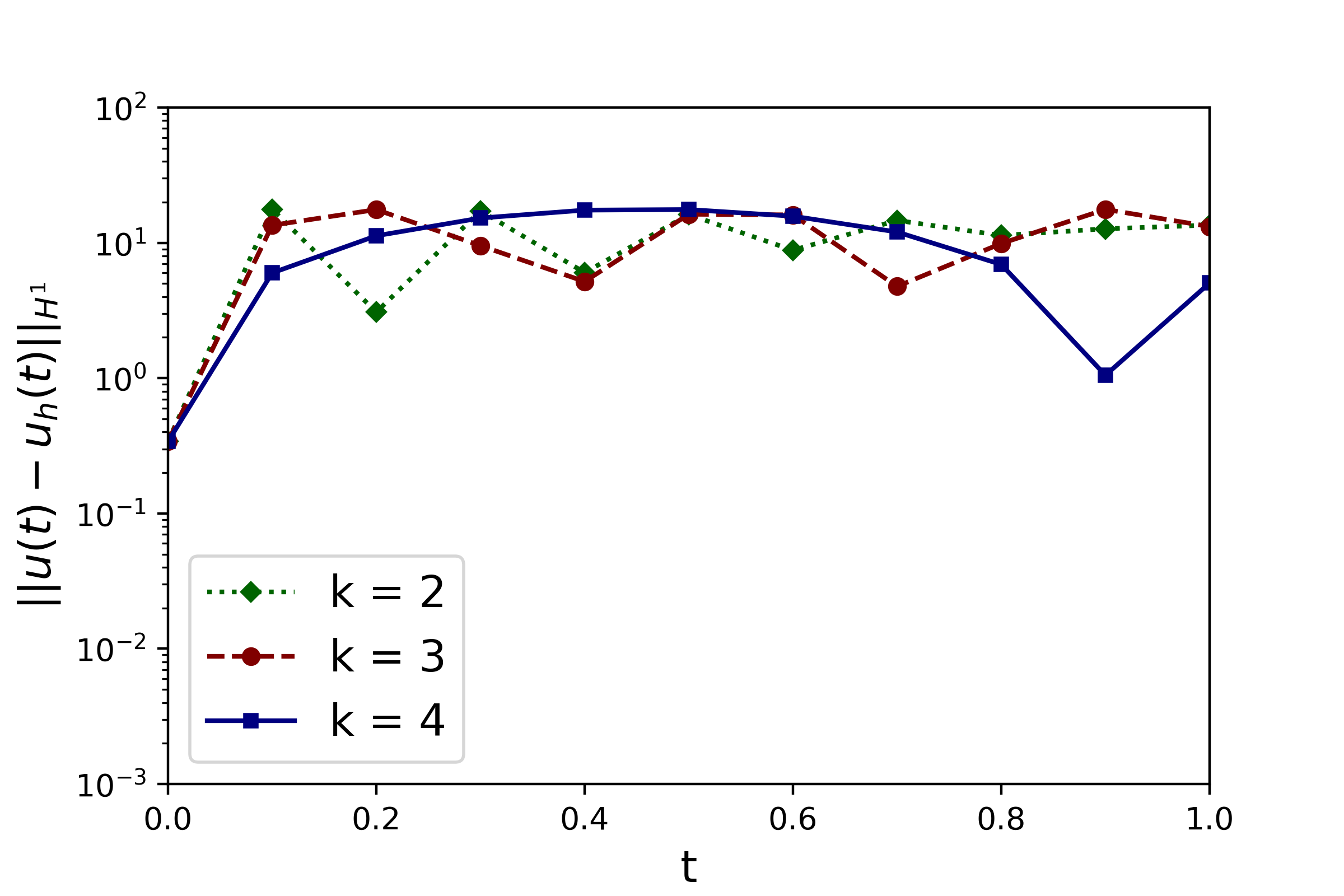}
}
\vspace{-8pt}
\caption{Evolution of error of the numerical solution, with $p=3$ and $\tau=h=0.2$.}
\label{fig_mass_energy_error_2d}
\centering
\includegraphics[width=2.6in]{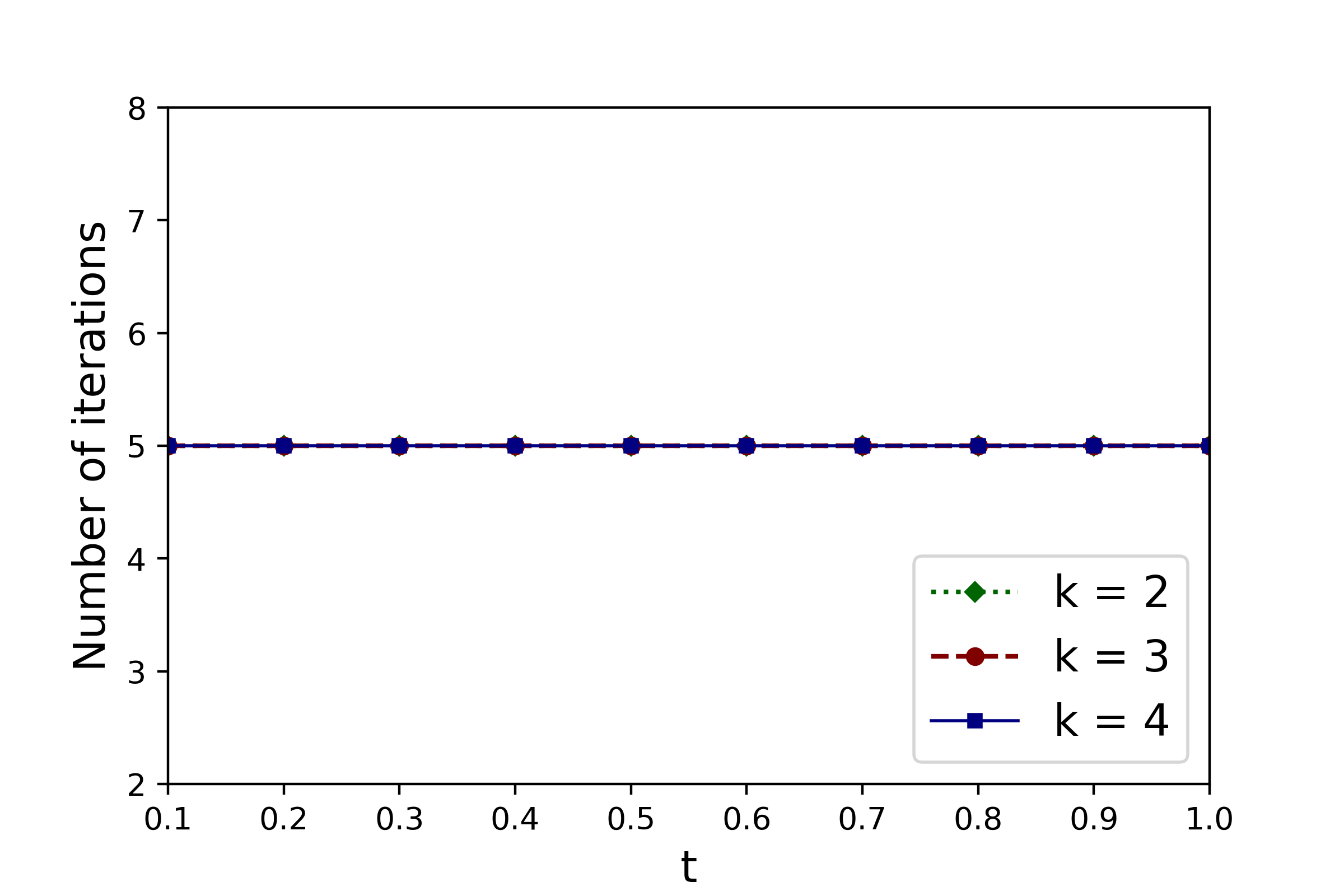}
\vspace{-8pt}
\caption{Number of iterations at each time level, with $p=3$ and $\tau=h=0.2$.}
\label{fig_iter_num_2d}
\end{figure}




\bibliographystyle{abrv}
\bibliographystyle{plain}

\end{document}